\documentclass[10pt]{article}
\usepackage[top=3.5cm, bottom=3cm, left=3cm, right=2.5cm]{geometry}

\usepackage{amssymb,amsmath}
\usepackage{amsthm}

\usepackage{amsfonts}
\usepackage{amscd}

\usepackage{subfigure}
\usepackage{graphicx}
\usepackage{epsfig}

\usepackage{esvect} %
\usepackage{enumerate}
\usepackage{enumitem}

\usepackage{dsfont}
\usepackage{mathtools}

\usepackage{hyperref}
\usepackage{nicefrac}
\theoremstyle{plain}
\newtheorem{thm}{Theorem}[section]
\theoremstyle{plain}
\newtheorem{lem}[thm]{Lemma}
\newtheorem{prop}[thm]{Proposition}
\newtheorem{cor}[thm]{Corollary}
\theoremstyle{definition}
\newtheorem{defi}[thm]{Definition}
\newtheorem{rem}[thm]{Remark}
\newtheorem{assumption}[thm]{Assumption}

\newtheorem{ex}[thm]{Example}

\newcommand{\ga}{\alpha}
\newcommand{\gb}{\beta}
\newcommand{\gd}{\delta}
\newcommand{\eps}{\ensuremath{\varepsilon}}

\renewcommand{\gg}{\gamma}
\newcommand{\gk}{\kappa}
\newcommand{\gl}{\lambda}
\newcommand{\go}{\omega}
\newcommand{\gs}{\sigma}

\newcommand{\gD}{\Delta}

\newcommand{\gO}{\Omega}

\newcommand{\cA}{\mathcal{A}}
\newcommand{\cB}{\mathcal{B}}
\newcommand{\cC}{\mathcal{C}}
\newcommand{\cD}{\mathcal{D}}

\newcommand{\cF}{\mathcal{F}}
\newcommand{\cG}{\mathcal{G}}
\newcommand{\cH}{\mathcal{H}}
\newcommand{\cI}{\mathcal{I}}

\newcommand{\cL}{\mathcal{L}}
\newcommand{\cM}{\mathcal{M}}
\newcommand{\cN}{\mathcal{N}}
\newcommand{\cO}{\mathcal{O}}
\newcommand{\cP}{\mathcal{P}}
\newcommand{\cQ}{\mathcal{Q}}

\newcommand{\cU}{\mathcal{U}}

\newcommand{\cW}{\mathcal{W}}
\newcommand{\cX}{\mathcal{X}}
\newcommand{\cY}{\mathcal{Y}}
\newcommand{\cZ}{\mathcal{Z}}

\newcommand{\bE}{\mathbb{E}}

\newcommand{\bN}{\mathbb{N}}

\newcommand{\bP}{\mathbb{P}}

\newcommand{\bR}{\mathbb{R}}

\newcommand{\bT}{\mathbb{T}}

\newcommand{\mfd}{\mathfrak{d}}

\newcommand{\mfN}{\mathfrak{N}}

\newcommand{\mfW}{\mathfrak{W}}

\newcommand{\LHS}{\cL_{\textnormal{HS}}}

\newcommand{\be}{\begin {equation}}
\newcommand{\ee}{\end  {equation}}
\newcommand{\bee}{\begin {equation*}}
\newcommand{\eee}{\end {equation*}}
\newcommand{\ol}{\overline}

\newcommand{\floor}[1]{\left\lfloor #1 \right\rfloor}
\newcommand{\laplace}{\bigtriangleup}
\newcommand{\abs}[1]{\left\vert#1\right\vert}
\newcommand{\var}{{\rm Var}}

\newcommand{\tr}{\text{Tr}}

\newcommand{\KL}{{Karhunen-Lo\`{e}ve }}
\newcommand{\MC}{Monte Carlo}
\newcommand{\MLMC}{multilevel \MC}
\newcommand{\MMLMC}{Multilevel \MC}

\newcommand{\bracket}[2]{\left\langle #1, #2 \right\rangle}
\newcommand{\dbracket}[2]{\left\langle\langle #1, #2 \right\rangle\rangle}

\newcommand{\norm}[1]{\left\|#1\right\|}
\newcommand{\set}[1]{\left\{#1\right\}}
\newcommand{\E}[1]{\bE\left(#1\right)}
\newcommand{\LR}[1]{\left(#1\right)}

\let\inf\relax \DeclareMathOperator*\inf{\vphantom{p}inf}

\usepackage[dvipsnames]{xcolor}

\usepackage{todonotes}
\newcommand\hcancel[2][black]{%
  \relax\ifmmode%
    \setbox0=\hbox{$#2$}%
    \else\setbox0=\hbox{#2}\fi%
  \rlap{\raisebox{.45\ht0}{\textcolor{#1}{\rule{\wd0}{1pt}}}}#2}
\usepackage{pgfplots}
\pgfplotsset{compat=1.18}

\def\tI{\textrm{I}}
\def\tII{\textrm{II}}
\def\tIII{\textrm{III}}
\def\tIV{\textrm{IV}}
\def\tV{\textrm{V}}
\def\tVI{\textrm{VI}}

\def\add#1{#1}
\def\del#1{}
\def\rdel#1{}

\def\correctSpace{{\del{\cL_1}}{\add{\LHS}}}

\begin{document}

\title{An Antithetic Multilevel Monte Carlo-Milstein Scheme\\ for Stochastic
  Partial Differential Equations \add{with non-commutative noise}}

\author{
  Abdul-Lateef Haji-Ali \footnote{Maxwell
    Institute, School of Mathematical and Computer Sciences, Heriot-Watt
    University, Edinburgh. Email:
	\href{mailto:a.hajiali@hw.ac.uk}{a.hajiali@hw.ac.uk}}
	\and
	Andreas Stein
	\footnote{Seminar for Applied Mathematics, ETH Zürich. Email: \href{mailto:andreas.stein@sam.math.ethz.ch}{andreas.stein@sam.math.ethz.ch}
	}
}

\date{\today}

\maketitle

\begin{abstract}
  We present a novel multilevel Monte Carlo approach for estimating quantities of interest for stochastic partial differential equations (SPDEs) \add{with non-commutative noise}. Drawing inspiration from \cite{giles2014antithetic}, we extend the antithetic Milstein scheme for finite-dimensional stochastic differential equations to Hilbert space-valued SPDEs. Our method has the advantages of both Euler and Milstein discretizations, as it is easy to implement and does not involve intractable L\'evy area terms. Moreover, the antithetic correction in our method leads to the same variance decay in a MLMC algorithm as the standard Milstein method, resulting in significantly lower computational complexity than a corresponding MLMC Euler scheme. Our approach is applicable to a broader range of non-linear diffusion coefficients and does not require any commutative properties. The key component of our MLMC algorithm is a truncated Milstein-type time stepping scheme for SPDEs, which accelerates the rate of variance decay in the MLMC method when combined with an antithetic coupling on the fine scales. We combine the truncated Milstein scheme with appropriate spatial discretizations and noise approximations on all scales to obtain a fully discrete scheme and show that the antithetic coupling does not introduce an additional bias.
\end{abstract}

{\bf Keywords:} Stochastic Partial Differential Equations, Multilevel Monte Carlo, Milstein Scheme, Variance Reduction, Antithetic Variates.

{\bf Subject classifiction:} 65C05, 65C30, 65M12.

\section{Introduction}
\label{sec:intro}

Stochastic partial differential equations (SPDEs) are encountered in a range
of applications spanning natural sciences, engineering, and finance. Examples
include stochastic epidemic compartment models \cite{nguyen2020analysis} and
the valuation of forward contracts in interest rate or energy markets
\cite{CT07, BK08, BB14}. However, a common challenge in these applications is
that SPDEs do not possess a closed-form solution and must therefore be
approximated numerically. Fortunately, numerous numerical schemes for
approximating various types of SPDEs have been established. A non-exhaustive
list of references provide strong approximation results for numerous SPDEs
with different approximations schemes in space and Euler \cite{JP09, B10,
  JKN11, DHP12, kruse2014strong, KLL15, GSS16, ACLW16, BS2019stochastic} or
Milstein
\cite{BL12b,jentzen:milstein-SPDE,barth:milstein-SPDE,hallern:milstein-SPDE}
discretizations in time, while others \cite{D09, KLS15, BSt18} offer a weak
error analysis.

\del{Once the `pathwise' approximations are obtained, they are utilized in sampling-based approaches to estimate specific quantities of interest within the SPDE model.
Monte Carlo (MC) methods are a common choice for this purpose.} %
\add{Sampling-based approaches, such as Monte Carlo (MC), which estimate expectations of
  quantities of interest depending on the SPDE model, utilize sufficiently accurate approximations of pathwise samples for that purpose.} %
\del{However, due to the low regularity of the model, standard MC approaches may become prohibitively expensive even for comparatively simple SPDEs.}
\add{However, due to the low regularity of the model, such accurate samples
  are expensive to compute, which compounds the sampling cost and renders
  Monte Carlo prohibitively inefficient.} %
In addition, higher-order \del{schemes}\add{sampling methods} for
\del{discretizing}\add{resolving} the stochastic space, such as stochastic
Galerkin or Quasi-Monte Carlo methods, are not \del{feasible}\add{suitable}
due to the limited regularity of the model.
Thus, the multilevel Monte Carlo (MLMC) method \cite{giles2015multilevel}
\del{seems to be the only viable}\add{has emerged as a good} option to
accelerate the estimation of expectations for SPDEs. This approach has been
studied in the context of SPDEs in \cite{BLS13, barth2012multilevel,
  giles2012stochastic, lang2016note} \add{with Euler and Milstien
  discretizations in time}.

One common drawback of the \del{schemes}\add{MLMC estimators} presented in
\cite{BLS13, barth2012multilevel, lang2016note} is that they rely on a simple
Euler discretization in time, which leads to slow temporal convergence rates.
In contrast, the authors of \cite{giles2012stochastic} propose a MLMC-Milstein
\del{scheme}\add{estimator} that uses a finite difference approximation in
space to accelerate temporal convergence. However, their SPDE model is
considerably simplified, as it is only driven by a one-dimensional Brownian
motion. Consequently, it is not necessary to simulate L\'evy area terms.
The simulation of these terms is a substantial issue \add{when using Milstein
  schemes} even for three-dimensional stochastic differential equations (SDEs)
without certain commutativity conditions on the diffusion term. Moreover, the
problem is exacerbated for infinite-dimensional driving noise, which is the
natural setting for SPDEs.

\subsection{Contributions}
\label{sec:contributions}

The objective of this research article is to address the previously mentioned
issues by introducing an \textit{antithetic \MLMC-Milstein scheme} for
parabolic SPDEs \add{with non-commutative noise}. Our work is based on the
antithetic MLMC scheme for SDEs presented in \cite{giles2014antithetic} and
offers several advantages.
Firstly, under natural assumptions, our scheme achieves higher-order convergence rates, similar to those of the 'standard' Milstein scheme. Secondly, the antithetic approach eliminates the need to sample L\'evy area terms, making the scheme easy to implement.
Our complexity analysis demonstrates that the proposed MLMC algorithm can significantly reduce computational time by several orders of magnitude.
Finally, we extend the results for SDEs from \cite{giles2014antithetic} by allowing for unbounded, random initial conditions and not requiring a global Lipschitz condition on the Milstein correction term.

\subsection{Outline}

The article is structured as follows: first, in Section~\ref{sec:prelim}, we provide the necessary notation and background on functional analysis, infinite-dimensional Wiener processes, and parabolic SPDEs. In Section~\ref{sec:approximation}, we propose discretization methods for the spatial, stochastic, and temporal domains of the SPDE. The main contribution of our paper is presented in Section~\ref{sec:anti-mlmc}, where we introduce the antithetic Milstein scheme and prove its expected variance decay in Theorem~\ref{thm:MainRes}. We then analyze the complexity of the associated antithetic MLMC Milstein scheme in Section~\ref{sec:MLMC} and present numerical experiments in Section~\ref{sec:numerics} to complement our theoretical analysis. All proofs are provided in an appendix for clarity.

\section{Preliminaries}
\label{sec:prelim}

\subsection{Basic Notation}
Let $(\cY, \left\|\cdot\right\|_\cY)$ and $(\cZ, \left\|\cdot\right\|_\cZ)$ be two Banach spaces.
The Borel $\gs$-algebra of $\cY$ is generated by the open sets in $\cY$ and denoted by $\cB(\cY)$.
We further denote by $\cL(\cY, \cZ)$ and $\cL(\cY)$ the set of linear bounded operators $O: \cY\to  \cZ$ and $O: \cY\to
\cY$, respectively.
For any (bounded or unbounded) operator $O: \cY\to \cZ$, we denote its adjoint by $O^*:\cZ\to \cY$.
Let $\cY_0\subseteq\cY$ be an open subset and let $F:\cY\to \cZ$ be a twice Fr\'echet differentiable mapping on $\cY_0$. The first two Fr\'echet derivatives of $F$ are given by
$F': \cY_0\to \cL(\cY, \cZ)$  and
$F'': \cY_0\to \cL(\cY, \cL(\cY, \cZ))\simeq \cL(\cY\times\cY, \cZ)$.
For the remainder of this article, $C>0$ denotes a generic positive constant which may change from one line to another. The dependency of $C$ on certain parameters is made explicit if necessary. \add{Moreover, let \(\bN\) denote the set of natural numbers excluding zero.}

\subsection{Hilbert-Schmidt Operators and RKHS}
Throughout this article, we consider two separable Hilbert spaces $(U,(\cdot,\cdot)_U)$ and $(H,(\cdot,\cdot)_H)$.
The space of \textit{Hilbert-Schmidt operators} \add{\cite[Appendix A]{PZ07}} on $U$
is given by
$$\LHS(U,H):=\{O\in \cL(U,H)|\; \|O\|^2_{\LHS(U,H)}:=\sum_{k\in\bN} \|O u_k\|^2_{H} < \infty \},$$
where $(u_k,k\in\bN)$ is some orthonormal basis of $U$. Recall that $(\LHS(U,H), \norm{\cdot}_{\LHS(U,H)})$ is separable, while this is in general not true for $\cL(U, H)$.
Further, $\LHS(U,H)$ is a Hilbert space equipped with the tensor product
\begin{equation*}
	(O_1, O_2)_{\LHS(U,H)}:=\sum_{k\in\bN} (O_1 u_k, O_2 u_k)_H, \quad O_1,O_2\in\LHS(U,H).
\end{equation*}
The tensor product of $U$ and $H$ is denoted by $(U\otimes H, (\cdot,\cdot)_{U\otimes H})$. For $\phi\in U$ and $\psi\in H$ we associate to $\phi\otimes\psi\in U\otimes H$ the rank one operator $O_{\phi,\psi}\in \LHS(U, H)$, such that $O_{\phi,\psi}u = (\phi, u)_U\psi$ for all $u\in U$.
Thus, we use the identification $U\otimes H\simeq \LHS(U,H)$, as $U\otimes H$ and $\LHS(U,H)$ are isometrically isomorphic.

We denote by $\cL_1(U)$ the space of all trace class operators on $U$, and by $\cL_1^+(U)$ the subset of all non-negative, self-adjoint operators on $U$ with finite trace. The trace of $Q\in \cL_1^+(U)$ is denoted by $\tr(Q)<\infty$.
For any $Q\in \cL_1^+(U)$, the Hilbert-Schmidt theorem yields that the ordered eigenvalues $\eta_1\ge\eta_2\ge\dots\ge0$ are non-negative with zero as only accumulation point, and the corresponding eigenfunctions $(e_k,k\in\bN)\subset U$ form an orthonormal basis of $U$.
The \textit{square-root} of $Q\in \cL_1^+(U)$ is defined via
\bee
Q^{\nicefrac{1}{2}}\phi:=\sum_{k\in\bN}\sqrt{\eta_k}(\phi,e_k)_Ue_k,\quad\phi\in U.
\eee
Since $Q^{\nicefrac{1}{2}}$ is not necessarily injective, the \textit{pseudo-inverse} of $Q^{\nicefrac{1}{2}}$ is given by
\bee
Q^{-1/2}\varphi:=\phi,\quad\text{if $Q^{\nicefrac{1}{2}}\phi=\varphi$\;
	and $\|\phi\|_U=\inf\set{
		\|\varphi\|_U:\;\text{$ \varphi\in U$ is such that $Q^{\nicefrac{1}{2}}\varphi=\phi$}}$}.
\eee
We define \textit{reproducing kernel Hilbert space} (RKHS) associated to $Q$ as the set $\cU:=Q^{\nicefrac{1}{2}}(U)$ equipped with the scalar-product
\bee
(\varphi_1,\varphi_2)_\cU:=(Q^{-1/2}\varphi_1,Q^{-1/2}\varphi_2)_U,\quad \varphi_1,\varphi_2\in\cU.
\eee
Note that $(\sqrt{\eta_k} e_k,k\in\bN)$ forms an orthonormal system in $\cU$, hence
\bee
\|O\|^2_{\LHS(\cU,H)}=\sum_{k\in\bN}\eta_k \|Oe_k\|_H^2,\quad O\in \LHS(\cU,H).
\eee

\subsection{Martingales on Hilbert Spaces}

We consider a filtered probability space $(\gO,\cF,\bP,(\cF_t,t\ge0))$ with normal filtration and a finite time interval $\bT=[0,T]$.
The Lebesgue-Bochner space of all $p$-integrable, $H$-valued random variables is given as
\bee
L^p(\gO; H):=\set{Y:\gO\to H \text{ is  measurable with $\|Y\|_{L^p(\gO; H)}:=\E{\norm{Y}_H^p}^{1/p}<\infty$}},\quad p\in[1,\infty).
\eee
Solutions to stochastic partial differential equations (SPDEs) are defined as predictable $H$-valued processes.
The \textit{predictable $\gs$-algebra} $\cP_\bT$ is the smallest $\gs$-field on $\gO\times\bT$ containing all sets of the form $\cA\times (s,t]$, where $\cA\in\cF_s$ and $s,t\in\bT$ with $s<t$.
An $H$-valued stochastic process $Y:\gO\times\bT\to H$ is called \textit{predictable} if it is a $\cP_\bT/\cB(H)$-measurable mapping.
The set of all square-integrable, $H$-valued predictable processes is denoted by
\be\label{eq:predict_processes}
\cX_{\bT}:=\set{X:\gO\times\bT\to H\,\big|\,\text{$X$ is predictable and $\sup_{t\in\bT} \bE(\|X(t)\|_H^2)<\infty$} }.
\ee
All appearing equalities and estimates involving stochastic terms are in the path-wise sense and are assumed to hold almost surely, thus we omit the stochastic argument $\go\in\gO$ for notational convenience.

\begin{defi}\add{\cite[Chapter 8]{PZ07}}
	Let $(e_k,k\in\bN)$ be an arbitrary orthonormal basis of $U$ and denote $\cM^2(U)$ the set of all square-integrable, $U$-valued martingales.
	\begin{enumerate}
		\item For $Y\in\cM^2(U)$, denote by $\bracket{Y}{Y}:\gO\times\bT\to\bR$ the unique predictable (quadratic variation) process, such that $\bT\ni t\mapsto \norm{Y(t)}_U^2- \bracket{Y}{Y}_t$ is a real-valued martingale. The covariation of two martingales $Y,Z\in\cM^2(U)$ is given by the polarization identity
		\begin{equation*}
			\bracket{Y}{Z}:=\frac{1}{2}\LR{\bracket{Y+Z}{Y+Z}-\bracket{Y}{Y}-\bracket{Z}{Z}}.
		\end{equation*}

		\item  The \emph{operator-valued angle bracket process}  $\dbracket{Y}{Y}:\gO\times \bT\to \cL_1^+(U)$ of $Y\in\cM^2(U)$ is defined as
		\begin{equation*}
			\dbracket{Y}{Y}:\gO\times \bT\to\LHS(U), \quad
			t\mapsto \sum_{k,l \in\bN} \bracket{(Y(\cdot), e_k)_U}{(Y(\cdot), e_l)_U}_t e_k\otimes e_l.
		\end{equation*}
	\end{enumerate}
\end{defi}

It holds that $\dbracket{Y}{Y}$ is the unique process such that $\bT\ni t\mapsto Y(t)\otimes Y(t)-\dbracket{Y}{Y}_t$ is an $\cL_1(U)$-valued martingale. Further, there exists a unique process $\cQ_Y:\gO\times \bT\to \cL_1^+(U)$, called the \emph{martingale covariance} of $Y$,  such that
\begin{equation}\label{eq:bracket_integral}
	\dbracket{Y}{Y}_t=\int_0^t \cQ_Y(s)\, d\bracket{Y}{Y}_s, \quad t\in\bT,
\end{equation}
see e.g. \cite[Theorem 8.2/Definition 8.3]{PZ07}.
We consider $H$-valued stochastic integrals $\int_0^tG(s)dY(s)$ with predictable,
operator-valued integrands $G:\gO\times\bT \del{\mapsto \cL}\add{\to \LHS} (\cU, H)$ such that
$G\circ \cQ_Y^{\nicefrac{1}{2}}:\gO\times\bT \del{\mapsto}\add{\to} \LHS(U, H)$\add{, see
\cite[Section 8.2 and 8.3]{PZ07} for the formal construction of such
stochastic integrals.}

\subsection{Wiener Process on a Hilbert Space}
\label{sec:Wiener}

\begin{defi}\cite[Definition 2.1.9]{prevot2007concise}
	Let $Q\in\cL_1^+(U)$.
	A $U$-valued stochastic process $W=(W(t),t\in\bT)$ on $(\gO, \cF, \bP)$ is called a \textit{$Q$-Wiener process} if
	\begin{itemize}
		\item $W(0)=0$,
		\item $W$ has $\bP$-almost surely continuous trajectories,
		\item $W$ has independent increments, and
		\item for all $0\le s \le t\le T$ there holds that $W(t)-W(s)\sim \cN(0, (t-s)Q)$.
	\end{itemize}
\end{defi}

For any $Q$-Wiener process there holds the identity
\bee
\bE( (W(t)-\bE(W(t)),\phi)_U (W(t)-\bE(W(t)),\psi)_U )=t(Q\phi,\psi)_U,\quad \phi,\psi\in U,\;t\in\bT.
\eee
It follows that $\bracket{W}{W}_t= t\,\tr(Q)$ and $\dbracket{W}{W}_t = t Q$ (note that $\cQ_Y=Q\, \tr(Q)^{-1}$ in~\eqref{eq:bracket_integral} is constant with respect to $t$ in this case).
Further, recall that $W$ admits the \textit{\KL expansion}
\be\label{eq:KL}
W(t)=\sum_{k\in\bN} (W(t),e_k)_Ue_k
\stackrel{d}{=}\sum_{k\in\bN} \sqrt{\eta_k}w_k(t)e_k,\quad t\in\bT,
\ee
where the relation $\stackrel{d}{=}$ signifies equality in distribution and $(w_k,k\in\bN)$ is a sequence of real-valued and independent standard Brownian motions.

\subsection{Stochastic Partial Differential Equations}
\label{sec:SPDEs}

We consider the stochastic partial differential equation (SPDE)
\be\label{eq:spde}
dX(t)=(AX(t)+F(X(t)))dt+G(X(t))dW(t), \quad X(0)=X_0,
\ee
where $A: D(A)\subset H\to H$ is a densely defined and unbounded linear (differential) operator.
The initial value $X_0$ is a $H$-valued random variable, $W$ is a $Q$-Wiener process, and
the coefficients $F$ and $G$ in Eq.~\eqref{eq:spde} are (possibly) non-linear measurable mappings  $F:H\to H$ and $G:H\to \LHS(\cU,H)$, respectively.
Throughout this article we will assume that $(-A)$ is self-adjoint, positive definite and boundedly invertible.
Consequently, the eigenvalues $(\lambda_n, n\in\bN)$ of $(-A)$ are positive, non-decreasing and only accumulate at infinity, with the corresponding eigenfunctions $(f_n, n\in\bN)$ spanning an orthonormal basis of $H$.

By the Hille-Yosida Theorem, $A$ is the generator of an analytic semigroup $S=(S(t), t\ge 0)\subset \cL(H)$ (see e.g. \cite[Appendix B.2]{kruse2014strong}).
The fractional powers of $(-A)$, given by
\begin{equation*}
	(-A)^{\ga}\, v:= \sum_{n\in\bN} \gl_n^\ga (v,f_n)_{H}f_n\quad v\in H,
\end{equation*}
are well-defined for any $\ga\in\bR$. Moreover, $(-A)^{\ga}:D((-A)^{\ga})\to H$ is
a closed operator, with $D((-A)^{\ga})$ being dense in $H$ \add{for all \(\ga \geq 0\)}
(see e.g. \cite[Chapter 2, Theorem 6.8]{pazy1983semigroups}).
We define the Hilbert space $\dot H^\ga:=D((-A)^{\nicefrac \ga 2})$ equipped with the inner product $(\cdot, \cdot)_\ga:=((-A)^{\frac \ga 2}\cdot, (-A)^{\frac \ga 2}\cdot)_H$, which will in turn be used to quantify smoothness of solutions to~\eqref{eq:spde}.

\begin{ex}\label{ex:dotH}
	Let $H=L^2(\cD)$ for on a bounded, convex domain $\cD\subset\bR^d$, and let $A = \laplace $ be the Laplace operator with zero Dirichlet boundary conditions on $\cD$. It then holds that $\dot H^2=D((-A))=H^2(\cD)\cap H_0^1(\cD)$.
	More generally, it holds for $\ga\in[1,2]$ that
	$\dot H^\ga=D((-A)^{\nicefrac{\ga}{2}})=H^\ga(\cD)\cap H_0^1(\cD)$, see \cite[Proposition 4.1]{bonito2015numerical}.
\end{ex}

We formulate suitable, but natural assumptions on the initial value and the coefficients of the SPDE~\eqref{eq:spde} in the following. We also repeat the above conditions on $A$ for the reader's convenience.

\begin{assumption}
	\add{Fix \(\ga \geq 1\) and assume that:}
	\begin{enumerate}[label=(\roman*)] \label{ass:SPDE}
        \item The operator $A: D(A)\subset H\to H$ is self-adjoint, densely defined in $H$ and the infinitesimal generator of an analytic semigroup \(S=(S(t), t\ge 0)\subset \cL(H)\)\add{, in other words, \(S:\:\bT\to \cL(H),\, t \mapsto e^{t A}\)}.
		Moreover, $(-A):D(A)\to H$ is boundedly invertible, i.e. $0\in\rho(A)$, where $\rho(A)$ is the resolvent set of $A$.

		\item\label{item:IC} $X_0\in L^8(\gO; \del{H}\add{\dot H^{\add{\ga}}})$ is a $\cF_0$-measurable random variable.

		\item\label{item:FG-diff} The mappings $F:H\to H$ and $G:H\to \LHS(\cU,H)$ are twice Fr\'echet differentiable on $H$ with bounded derivatives, i.e. there is a $C>0$ such that for all $v\in H$ there holds
		\begin{align*}
			\norm{F'(v)}_{\cL(H)} + \norm{F''(v)}_{\cL(H\times H, H)} &\le C \\
			\norm{G'(v)}_{\cL(H, \LHS(\cU,H))} + \norm{G''(v)}_{\cL(H\times H, \LHS(\cU,H))} &\le C.
		\end{align*}

		\item\label{item:smooth-coefficients} There are constants $C>0$ such that for all $v\in \dot H^\ga$ there hold the linear growth bounds
		\begin{align*}
			\|F(v)\|_{\dot H^\ga}
			+\|G(v)\|_{\LHS(\cU, \dot H^\ga)}
			&\le C(1+\|v\|_{\dot H^\ga}), \\
			\|G'(v) \|_{\cL(\dot H^\ga, \LHS(\cU, \dot H^\ga))}
			&\le C.
		\end{align*}
	\end{enumerate}
\end{assumption}

\begin{rem}
  We require $X_0\in L^8(\gO; \del{H}\add{\dot H^{\add{\ga}}})$, rather than $X_0\in L^2(\gO;
  \del{H}\add{\dot H^{\add{\ga}}})$, in Item~\ref{item:IC} for some technical steps in the
  proofs (cf. Lemma~\ref{lem:semi-error} in the Appendix), as we apply
  Hölder's inequality to obtain suitable mean-square error bounds.
\end{rem}

Mild solutions to SPDEs are characterized by path-wise identities that hold almost surely as follows:

\begin{defi}{\cite[Chapter 9]{PZ07}} \label{def:solutions}
	Let $\cX_\bT$ be as in~\eqref{eq:predict_processes}.
	A process $X\in\cX_\bT$ is called a \textit{mild solution} to Eq.~\eqref{eq:spde} if for all $t\in\bT$ there holds $\bP$-a.s.
	\be \label{eq:mild}
	X(t)=S(t)X_0+\int_0^tS(t-s)F(X(s))ds+\int_0^tS(t-s)G(X(s))dW(s).
	\ee
\end{defi}

\del{In Eq.~\eqref{eq:mild}, $S:\bT\to \cL(H)$ is the semigroup generated by $(-A)$, thus $S(t)=e^{-tA}$ and Eq.~\eqref{eq:mild} may be interpreted as a \textit{variation-of-constants} formula.}
\add{Recalling that \(S(t) = e^{-tA}\), Eq.~\eqref{eq:mild} may then be interpreted as a \textit{variation-of-constants} formula.}
Well-posedness of \eqref{eq:spde} in the mild sense, and regularity of solutions has been investigated under suitable assumptions on $F, G$ and $X_0$, see e.g. \cite[Theorems 9.15 and 9.29]{PZ07} or \cite[Chapters 2.4-2.6]{kruse2014strong}. We condense the main results in the following statement.

\begin{thm}\label{thm:well-posedness}
  Under Assumption~\ref{ass:SPDE}, there exists a unique mild solution
  $X\in\cX_\bT$ to \eqref{eq:spde}, such that for all $p\in (0, 8]$ and $\gk\in[0,
  \ga)$ it holds that
	\begin{equation*}
		\sup_{t\in\bT}\bE(\|X(t)\|_{\dot H^\ga}^p)<\infty \quad\text{and}\quad
		\sup_{t, s\in\bT} \frac{\bE(\|X(t)-X(s)\|_{\dot H^{\gk}}^p)^{\nicefrac{1}{p}}}
		{|t-s|^{\min(\nicefrac{1}{2}, \nicefrac{(\ga-\gk)}{2})}}<\infty.
	\end{equation*}
\end{thm}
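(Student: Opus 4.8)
The plan is to establish the three assertions---existence and uniqueness, spatial regularity, and temporal H\"older regularity---in that order, treating the mild formulation \eqref{eq:mild} as a fixed-point equation and exploiting the smoothing properties of the analytic semigroup $S$ together with a Burkholder--Davis--Gundy (BDG) inequality for the $H$-valued stochastic convolution. The two semigroup facts I would invoke throughout are the commutation of $S(t)$ with the fractional powers $(-A)^\theta$, the bound $\norm{S(t)}_{\cL(H)}\le C$ (which follows from $0\in\rho(A)$), and the standard estimates $\norm{(-A)^\rho S(t)}_{\cL(H)}\le C t^{-\rho}$ and $\norm{(-A)^{-\rho}(S(t)-I)}_{\cL(H)}\le C t^{\min(1,\rho)}$ for $\rho\ge 0$.

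\textbf{Step 1 (existence and uniqueness).} First I would note that Item~\ref{item:FG-diff} makes $F$ and $G$ globally Lipschitz as maps $H\to H$ and $H\to\LHS(\cU,H)$, since bounded first derivatives yield a Lipschitz constant via the mean value inequality. I would then define $\cK$ on $\cX_\bT$ by letting $(\cK Y)(t)$ be the right-hand side of \eqref{eq:mild} with $X$ replaced by $Y$, and show that $\cK$ is a contraction on $\cX_\bT$ for the weighted norm $\sup_{t\in\bT}e^{-\gb t}\bE(\norm{Y(t)}_H^2)^{1/2}$ with $\gb$ large; here the stochastic convolution is controlled by the It\^o isometry $\bE(\norm{\int_0^t\Phi(s)\,dW(s)}_H^2)=\bE(\int_0^t\norm{\Phi(s)}_{\LHS(\cU,H)}^2\,ds)$ and boundedness of $S$. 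Banach's fixed-point theorem yields a unique $X\in\cX_\bT$, and iterating the same estimates with BDG for exponent $p$ upgrades this to $\sup_{t\in\bT}\bE(\norm{X(t)}_H^p)<\infty$ for all $p\le 8$, using $X_0\in L^8(\gO;\dot H^\ga)\subset L^8(\gO;H)$ from Item~\ref{item:IC}.

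\textbf{Step 2 (spatial regularity).} Applying $(-A)^{\ga/2}$ to \eqref{eq:mild}, I would bound $\norm{S(t)X_0}_{\dot H^\ga}\le C\norm{X_0}_{\dot H^\ga}$, while the growth bounds of Item~\ref{item:smooth-coefficients}, namely $\norm{F(v)}_{\dot H^\ga}+\norm{G(v)}_{\LHS(\cU,\dot H^\ga)}\le C(1+\norm{v}_{\dot H^\ga})$, show that---crucially---no smoothing is needed for the convolutions: the mere boundedness of $S$ on $\dot H^\ga$ suffices, so the drift is dominated pathwise by $\int_0^t C(1+\norm{X(s)}_{\dot H^\ga})\,ds$ and BDG dominates the diffusion by $C\,\bE(\int_0^t(1+\norm{X(s)}_{\dot H^\ga})^2\,ds)^{p/2}$, after which a Gr\"onwall argument gives $\sup_{t\in\bT}\bE(\norm{X(t)}_{\dot H^\ga}^p)<\infty$. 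The main obstacle is the circularity hidden in Gr\"onwall, since one must already know this moment is finite. I would remove it by running the Picard iteration $X^{(n+1)}=\cK X^{(n)}$ from Step~1 and proving by induction a bound on $\sup_n\sup_{t\in\bT}\bE(\norm{X^{(n)}(t)}_{\dot H^\ga}^p)$ that is uniform in $n$, then passing to the limit $X^{(n)}\to X$ by Fatou's lemma; a stopping-time truncation $\tau_N=\inf\{t:\norm{X(t)}_{\dot H^\ga}>N\}$ is an equivalent route.

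\textbf{Step 3 (temporal regularity).} Fix $0\le s<t\le T$ and $\gk\in[0,\ga)$, and split $X(t)-X(s)$ into the semigroup-difference contributions $(S(t-s)-I)S(s)X_0$, $\int_0^s S(s-r)(S(t-s)-I)F(X(r))\,dr$ and the analogous stochastic term, plus the fresh increments $\int_s^t S(t-r)F(X(r))\,dr$ and $\int_s^t S(t-r)G(X(r))\,dW(r)$. The difference contributions are estimated with $\norm{(-A)^{-(\ga-\gk)/2}(S(t-s)-I)}_{\cL(H)}\le C|t-s|^{\min(1,(\ga-\gk)/2)}$, which trades the $\ga-\gk$ extra derivatives already available from Step~2 for the factor $|t-s|^{(\ga-\gk)/2}$; the fresh drift increment is $O(|t-s|)$, and the fresh stochastic increment is $O(|t-s|^{1/2})$ through the BDG scaling $\bE(\norm{\int_s^t\cdots\,dW}^p)\le C(\int_s^t\norm{\cdots}_{\LHS(\cU,\dot H^\gk)}^2\,dr)^{p/2}\lesssim|t-s|^{p/2}$, and this last bound is precisely the source of the $\tfrac12$ in $\min(\tfrac12,\tfrac{\ga-\gk}{2})$. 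Taking $p$-th roots and the minimum over the exponents $\min(1,\tfrac{\ga-\gk}{2})$, $1$ and $\tfrac12$ collapses to $\min(\tfrac12,\tfrac{\ga-\gk}{2})$, which yields the claimed H\"older bound uniformly in $s,t$.
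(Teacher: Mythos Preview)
Your sketch is correct and follows the standard methodology for well-posedness and regularity of semilinear parabolic SPDEs. However, the paper does not actually prove Theorem~\ref{thm:well-posedness} at all: it is stated as a condensed summary of known results, with the sentence preceding the theorem pointing to \cite[Theorems~9.15 and~9.29]{PZ07} and \cite[Chapters~2.4--2.6]{kruse2014strong} for the proofs. What you have written is essentially the outline of the argument one finds in those references---a Banach fixed-point argument for existence and uniqueness, a Gr\"onwall-type bootstrap (made rigorous via Picard iterates or stopping times) for the $\dot H^\ga$ moment bound using the linear-growth hypotheses of Assumption~\ref{ass:SPDE}\ref{item:smooth-coefficients}, and the usual decomposition into semigroup-difference and fresh-increment terms for the temporal H\"older estimate. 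So there is no discrepancy in approach to report; you have simply supplied the proof that the paper delegates to the literature.
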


\section{Pathwise Approximations}
\label{sec:approximation}

\subsection{Spatial Discretization}
To derive a spatial approximation based\add{, we follow \cite[Section
  3.2]{kruse2014strong} and} define $V:=\dot H^1=D((-A)^{\nicefrac{1}{2}})$ and
  consider the bilinear form
\begin{equation}
  B:V\times V\to \bR,\quad \del{(v,w)\mapsto ((-A)^{\nicefrac{1}{2}}v, (-A)^{\nicefrac{1}{2}}w)}
  \add{B(v,w) :=  (v,w)_{1} = ((-A)^{\nicefrac{1}{2}}v, (-A)^{\nicefrac{1}{2}}w)_{H}}.
\end{equation}
In Example~\ref{ex:dotH}, where $A$ is the Laplacian with zero Dirichlet
boundary conditions on a convex domain $\cD\subset\bR^d$, we have $V=H_0^1(\cD)$,
and $B(v,w)=(\nabla v, \nabla w)_H$.

We replace $V$ by a finite dimensional subspace $V_N$ with $N:=\dim(V_N)\in\bN$.
This encompasses several spatial approximations, for instance spectral Galerkin methods, where $N$ is number of terms in expansion, and finite element methods, where the mesh refinement parameter $h>0$ is related to $N$ via $N=\cO(h^{-d})$.
We introduce the \emph{discrete} operator $A_N:V_N\to V_N$ by
\begin{equation}\label{eq:operator-discrete}
	(-A_N v_N, w_N)_{\add{1}} = B(v_N, w_N),\quad v_N, w_N\in V_N.
\end{equation}
Then, $(-A_N)$ generates an analytic semigroup $(S_N(t), t\ge0)$ of linear operators $S_N(t):V_N\to V_N$ via $S_N(t):=\exp(-tA_N)$.
Let $P_N:H\to V_N$ be the $H$-orthogonal projection onto $V_N$.
The semi-discrete (mild) problem is then to find $X_N:\gO\times\bT\to V_N$ such that for all $t\in\bT$ there holds $\bP$-a.s.
\be
\label{eq:mild-semi}
X_N(t)=S_N(t)P_NX_0
+\int_0^tS_N(t-s)P_N F(X_N(s)) ds
+\int_0^tS_N(t-s)P_NG(X_N(s))dW(s).
\ee

\subsection{Noise Approximation}
Recall the \KL expansion of $W$ from Equation~\eqref{eq:KL}, where the scalar products $(W(\cdot),e_k)_H$ are real-valued, independent and scaled Brownian motions with variance $\eta_k\ge0$ (the $k$-th eigenvalue of $Q$).
In general, infinitely many of the eigenvalues $\eta_k$ are strictly greater than zero, hence we truncate the series in Eq.~\eqref{eq:KL} after $K\in\bN$ terms to obtain
\bee
W_K(t):=\sum_{k=1}^K (W(t),e_k)_He_k,\quad t\in\bT.
\eee
It can be shown, see for example \cite{BS18a}, that $W_K$ converges to $W$ in mean-square uniformly on $\bT$ with truncation error given by
\bee
\bE\LR{\|W_K(t)-W(t)\|_U^2} = t\sum_{k>K}\eta_k,\quad t\in\bT.
\eee
Combining the semi-discrete mild formulation from~\eqref{eq:mild-semi} with the noise truncation then yields the problem
to find $X_{N,K}:\gO\times\bT\to V_N$ such that for all $t\in\bT$ there holds $\bP$-a.s.
\be
\label{eq:mild-semi-noise}
X_{N,K}(t)=S_N(t)P_NX_0
+\int_0^tS_N(t-s)P_N F(X_{N,K}(s)) ds
+\int_0^tS_N(t-s)P_NG(X_{N,K}(s))dW_K(s).
\ee

\subsection{Time Stepping}
The temporal discretization is based on rational approximations of $S_N$.
Recall that $(-A_N):V_N\to V_N$ is a linear, positive definite, self-adjoint operator and that $N=\dim(V_N)\in\bN$. There exists an $H$-orthonormal eigenbasis $(\widetilde f_1, \dots, \widetilde f_N)\subset V_N$ of eigenfunctions of $(-A_N)$, with corresponding non-decreasing eigenvalues $(\widetilde\gl_1,\dots, \widetilde\gl_N)$ such that $\widetilde\gl_1 > 0$. We denote the spectrum of $(-A_N)$ by $\gs(-A_N)$ and consider a rational function $r$ defined on $\gs(-A_N)$.

Now fix $M\in\bN$ and let $0=t_0<t_1<\dots<t_M=T$ be an equidistant grid of $[0,T]$ with time step $\gD t:=\nicefrac{T}{M}$. Further, let $r(\gD t A_N)$ be a rational approximation of  $S_N(\gD t)=\exp(-\gD t A_N)$, given by
\begin{equation}\label{eq:rational-approx}
	r(\gD t A_N)v = \sum_{n=1}^{N} r(\gD t \widetilde\gl_n) (v, \widetilde f_n)_H\widetilde f_n, \quad v\in H.
\end{equation}
The drift part in~\eqref{eq:mild-semi-noise} is then approximated in each time step by the forward difference
\begin{equation*}
	\int_{t_m}^{t_{m+1}}S_N(t_{m+1}-s)P_NF(X_{N,K}(s))ds
	\approx r(\gD t A_N)P_NF(X_{N,K}(t_m)) \gD t.
\end{equation*}

To introduce the approximation of the stochastic integral, recall that $G':H\to \cL(H, \LHS(\cU, H))$ denotes the Fr\'echet derivative of $G$.
For any $k\in\bN$ such that $\eta_k>0$, we define $w_k:=\eta_k^{-\nicefrac{1}{2}}(W,e_k)_U$, hence $(w_k,k\in\bN)$ is the sequence of independent Brownian motions in the \KL expansion of $W$.
Further, for $m=0,\dots, M-1$ and any stochastic process $\mfW:\gO\times\bT\to \cH$ with $\cH\in\{\bR, U, \cL_1(U)\}$, we denote by $\gD_m \mfW:= \mfW(t_{m+1})-\mfW(t_m)$ the increment with timestep $[t_m,t_{m+1}]$ (we will use in particular $\mfW\in \{W, W_K, w_k\}$).
We employ a truncated Milstein scheme to approximate the stochastic integral in~\eqref{eq:mild-semi-noise} by a first order Taylor expansion of $G$ via
\begin{align*}
	&\int_{t_m}^{t_{m+1}}S_N(t_{m+1}-s)P_NG(X_{N,K}(s))dW_K(s)\\
	\approx
	&\int_{t_m}^{t_{m+1}}S_N(t_{m+1}-s)P_NG(X_{N,K}(t_m)) dW_K(s) \\
	&\quad+
	\int_{t_m}^{t_{m+1}}S_N(t_{m+1}-s)P_N
	\left[G'(X_{N,K}(t_m))\left(\int_{t_m}^{s}S_N(s-r)P_NG(X_{N,K}(r))dW_K(r)\right)\right]\,dW_K(s) \\
	\approx
	&r(\gD tA_N)P_NG(X_{N,K}(t_m))\gD_m W_K  \\
	&\quad+ r(\gD tA_N)P_N\int_{t_m}^{t_{m+1}}
	G'(X_{N,K}(t_m))\left(P_NG(X_{N,K}(t_m))\int_{t_m}^{s}dW_K(r)\right)dW_K(s)\\
	\approx
	&r(\gD tA_N)P_NG(X_{N,K}(t_m))\gD_m W_K  \\
	&\quad+
	\frac{r(\gD tA_N)P_N}{2}
	\sum_{k,l=1}^K
	G'(X_{N,K}(t_m))\left(P_NG(X_{N,K}(t_m))e_l\right)e_k
	\LR{\sqrt{\eta_k\eta_l}\gD_m w_k\gD_m w_l-\gd_{k,l}\eta_k\gD t},
\end{align*}
where $\gd_{k,l}$ is the Kronecker delta. This approximation corresponds to
the truncated Milstein scheme in \cite{giles2014antithetic} for
finite-dimensional SDEs. \add{Moreover, compared to the Milstein scheme for
  SPDEs
  \cite{jentzen:milstein-SPDE,barth:milstein-SPDE,hallern:milstein-SPDE}, the
  truncated Milstein scheme drops the terms which involve iterated integrals
  of the underlying Wiener processes and is thus identical to the Milstein
  scheme for commutative noise.}
Now define for any $s\in [t_m,T]$ the \(\correctSpace(U)\)-valued process
\begin{equation}\label{eq:tensor_W}
	\cW_{m,K}(s):=(W_K(s)-W_K(t_m))\otimes (W_K(s)-W_K(t_m)) - (s-t_m)\sum_{k=1}^K\eta_k\, e_k\otimes e_k,
\end{equation}
and note that $\cW_{m,K}$ is a continuous, square-integrable,
$\correctSpace(U)$-valued martingale on $[t_m,T]$. Further, let $Q\otimes
Q\in\cL(\cL_1(U))$ be given by $Q\otimes Q (\phi\otimes \varphi) = Q \phi\otimes Q\varphi$ for all $\phi\otimes \varphi\in
\cL_1(U)$.
As $W_K(s)-W_K(t_m)$ is Gaussian, there is a $C>0$ such that for all $s,t\in [t_m,T]$ with $t\ge s$ there holds
\begin{equation}\label{eq:W_bracket}
	\dbracket{\cW_{m,K}}{\cW_{m,K}}_t - \dbracket{\cW_{m,K}}{\cW_{m,K}}_s\le C (t-s)^2Q\otimes Q.
\end{equation}
We use the operator-valued processes
$\cW_{m,K}$ to write the truncated correction term in a compact form.

\begin{prop}\label{prop:correction_est}
	Let Assumption~\ref{ass:SPDE} hold and let $\cW_{m,K}$ be defined as
	in~\eqref{eq:tensor_W} for $m=0,\dots, M-1$ and $M\in\bN$, and let further
	$\gD_m w_k:=(W(t_{m+1})-W(t_m),e_k)_U$ for $k\in\bN$. There exists a mapping
	$\cG:H \del{\mapsto}\add{\to} \LHS(\LHS(\cU), H)$, such that for any
	$X\in H$ and $M\in\bN$ there holds
	\be\label{eq:tensor_correction}
	\int_{t_m}^{t_{m+1}}\cG(X)d\cW_{m,K}(s)
	= \frac{1}{2} \sum_{k,l=1}^K
	G'(X)\left(P_N G(X)e_l\right)e_k
	\LR{\sqrt{\eta_k\eta_l} \gD_m w_k\gD_m w_l-\gd_{k,l}\eta_k\gD t}.
	\ee
	Moreover, $\cG$ is Fr\'echet differentiable on $H$ and satisfies the linear growth bounds
	\be\label{eq:tensor_norm}
	\|\cG(X)\|_{\LHS(\LHS(\cU), H)}+\|\cG'(X)\|_{\cL(H, \LHS(\LHS(\cU), H))}
	\le
	C(1+\|X\|_H), \quad X\in H,
	\ee
	and
	\be\label{eq:tensor_norm2}
	\|\cG(X)\|_{\LHS(\LHS(\cU), \dot H^\ga)}
	\le
	C(1+\|X\|_{\dot H^\ga}), \quad X\in \dot H^\ga.
	\ee
\end{prop}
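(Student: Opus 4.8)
The plan is to exhibit $\cG$ explicitly and read off every assertion from its definition. Since $\cW_{m,K}$ is $\LHS(U)$-valued and $\LHS(U)\simeq U\otimes U$, while by \eqref{eq:W_bracket} its martingale covariance is dominated by $Q\otimes Q$, whose associated reproducing kernel space is $\cU\otimes\cU\simeq\LHS(\cU)$, the admissible integrands for $\cW_{m,K}$ are precisely the $\LHS(\LHS(\cU),H)$-valued processes. I would therefore define $\cG(X)\in\LHS(\LHS(\cU),H)$ through its action on rank-one tensors,
\[
\cG(X)(\phi\otimes\psi):=\tfrac12\,G'(X)\bigl(P_NG(X)\psi\bigr)\phi,\qquad \phi,\psi\in\cU,
\]
and extend by bilinearity and continuity. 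Well-definedness is immediate: the right-hand side is bilinear in $(\phi,\psi)$, hence determines a unique linear map on the algebraic tensor product $\cU\odot\cU$, and the Hilbert–Schmidt bound proved below shows this map is bounded on $\LHS(\cU)$, so it extends uniquely to the completion.

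Next I would establish the integral identity \eqref{eq:tensor_correction}. For fixed deterministic $X\in H$ the integrand $\cG(X)$ is constant in $s$, so by the definition of the stochastic integral against the martingale $\cW_{m,K}$ on elementary integrands it reduces to the operator applied to the increment,
\[
\int_{t_m}^{t_{m+1}}\cG(X)\,d\cW_{m,K}(s)=\cG(X)\bigl(\cW_{m,K}(t_{m+1})-\cW_{m,K}(t_m)\bigr).
\]
Since $\cW_{m,K}(t_m)=0$, and writing $W_K(t_{m+1})-W_K(t_m)=\sum_{k=1}^K(\gD_m W,e_k)_U\,e_k$, the defining formula \eqref{eq:tensor_W} expands the increment as
\[
\cW_{m,K}(t_{m+1})=\sum_{k,l=1}^K\bigl((\gD_m W,e_k)_U(\gD_m W,e_l)_U-\gd_{k,l}\eta_k\gD t\bigr)\,e_k\otimes e_l.
\]
Applying $\cG(X)$ term by term and using $(\gD_m W,e_k)_U=\sqrt{\eta_k}\,\gD_m w_k$ for the independent Brownian motions of the \KL expansion yields exactly the right-hand side of \eqref{eq:tensor_correction}.

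Fréchet differentiability follows by differentiating the defining formula in $X$, using that $G$ is twice Fréchet differentiable with bounded derivatives (Assumption~\ref{ass:SPDE}, Item~\ref{item:FG-diff}). The product and chain rules give, for $V\in H$,
\[
\cG'(X)(V)(\phi\otimes\psi)=\tfrac12\,G''(X)\bigl(V,P_NG(X)\psi\bigr)\phi+\tfrac12\,G'(X)\bigl(P_NG'(X)(V)\psi\bigr)\phi,
\]
and continuity of $G'$ and $G''$ shows $\cG\in C^1\bigl(H;\LHS(\LHS(\cU),H)\bigr)$.

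Finally, the bounds \eqref{eq:tensor_norm}–\eqref{eq:tensor_norm2} rest on one recurring computation. The family $(\sqrt{\eta_i\eta_j}\,e_i\otimes e_j)$ is an orthonormal system in $\LHS(\cU)$, so with $\|O\|^2_{\LHS(\cU,H)}=\sum_i\eta_i\|Oe_i\|_H^2$ one gets
\[
\|\cG(X)\|^2_{\LHS(\LHS(\cU),H)}=\tfrac14\sum_{j}\eta_j\,\|G'(X)(P_NG(X)e_j)\|^2_{\LHS(\cU,H)}
\le \tfrac{C}{4}\sum_j\eta_j\|G(X)e_j\|_H^2=\tfrac{C}{4}\|G(X)\|^2_{\LHS(\cU,H)},
\]
using $\|G'(X)\|_{\cL(H,\LHS(\cU,H))}\le C$ and $\|P_N\|_{\cL(H)}\le 1$; the linear growth of $G$ (again from the bounded first derivative) then gives \eqref{eq:tensor_norm} for $\cG$, and the two-term expression for $\cG'$ is estimated identically, the $G''$-term carrying the factor $(1+\|X\|_H)$ and the $G'G'$-term a constant. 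Repeating the computation with $H$ replaced by $\dot H^\ga$ and invoking the $\dot H^\ga$ growth and boundedness bounds of Assumption~\ref{ass:SPDE}, Item~\ref{item:smooth-coefficients}, yields \eqref{eq:tensor_norm2}, provided $P_N$ is stable on $\dot H^\ga$ (a contraction in the spectral-Galerkin case, and stable for quasi-uniform finite elements). The main obstacle is not these estimates but the bookkeeping needed to make the abstract integral against the $\LHS(U)$-valued martingale $\cW_{m,K}$ rigorous: one must confirm that the governing reproducing kernel space is precisely $\LHS(\cU)$, so that $\cG(X)\in\LHS(\LHS(\cU),H)$ is an admissible integrand and the elementary-integrand identity above is legitimate. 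Once this identification is secured, the remaining claims reduce to the routine applications of Assumption~\ref{ass:SPDE} indicated here.
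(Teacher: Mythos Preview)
Your proposal is correct and follows essentially the same route as the paper: define $\cG(X)$ via its action on elementary tensors, expand the increment $\gD_m\cW_{m,K}$ in the basis $(e_k\otimes e_l)$ to read off the integral identity, compute the Hilbert--Schmidt norm along the orthonormal system $(\sqrt{\eta_k\eta_l}\,e_k\otimes e_l)$, and differentiate termwise for $\cG'$. The only cosmetic difference is that the paper assigns the tensor slots in the opposite order (it sets $\cG_X(\phi\otimes\varphi)=\tfrac12 G'(X)(P_NG(X)\phi)\varphi$), which is immaterial since $\gD_m\cW_{m,K}$ is symmetric; your convention in fact matches the target sum in \eqref{eq:tensor_correction} directly without an index swap. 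Your remark that the $\dot H^\ga$ bound \eqref{eq:tensor_norm2} implicitly needs $\dot H^\ga$-stability of $P_N$ is a fair observation that the paper glosses over with ``follows analogously''; it is automatic for spectral Galerkin and standard for quasi-uniform finite elements, so it is not a gap in your argument.
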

\begin{proof}
	See Appendix~\ref{sec:app1}.
\end{proof}

Based on Proposition~\ref{prop:correction_est}, we obtain the fully discrete problem as to find $Y_0^{N,K}, Y_1^{N,K} \dots, Y_M^{N,K}:\gO\to V_N$ such that $Y_0^{N,K}=P_NX_0$ and for all $m=0,\dots, M-1$ there holds
\be\label{eq:truncated_milstein}
\begin{split}
	Y_{m+1}^{N,K}
	&= r(\gD tA_N)P_N\LR{Y_m^{N,K}
		+ F(Y_m^{N,K})\gD t+G(Y_m^{N,K})\gD_m W_K
		+ \cG(Y_m^{N,K}) \gD_m\cW_{m,K}},
\end{split}
\ee
where we have used~\eqref{eq:tensor_correction} to define the last term in~\eqref{eq:truncated_milstein} as
\bee
\cG(Y_m^{N,K})\gD_m\cW_{m,K}:= \int_{t_m}^{t_{m+1}}\cG(Y_m^{N,K}) d\cW_{m,K}(s), \quad m=0,\dots, M-1.
\eee

The first tree terms on the right hand side of \eqref{eq:truncated_milstein} correspond to an Euler approximation of $X$, the fourth term is the truncated Milstein correction.
We emphasize that \emph{the scheme in \eqref{eq:truncated_milstein} does not
  require the simulation of any iterated stochastic integrals}, and is
therefore straightforward to implement \add{relative to the standard Milstein
  scheme \cite{jentzen:milstein-SPDE}}.
We formulate the following assumption on strong and weak convergence of the fully discrete scheme. \add{In the following, let the standard Euler discretization of~\eqref{ass:SPDE} be given by $\widehat Y_0=P_NX_0$ and
  \be\label{eq:euler-scheme}
  \widehat Y_{m+1}^{N,K} = r(\gD tA_N) \widehat Y_m^{N,K}
  + r(\gD tA_N)P_NF(\widehat Y_m^{N,K})\gD t
  +r(\gD tA_N)P_NG(\widehat Y_m^{N,K})\gD_m W_K,
  \ee
  for $m=0,\dots, M-1$. See, e.g., \cite[Section 3.6]{kruse2014strong}, and contrast
  this standard scheme to the truncated Milstein scheme in \eqref{eq:truncated_milstein}.
}

\begin{assumption}\label{ass:approximation} ~
	\begin{enumerate}[label=(\roman*)]
		\item\label{item:rational} The rational approximation $r$ of $S_N$ is\del{of order $q\in\bN$ and} stable \add{and at least first order}. That is, $r(z)=e^{-z}+\cO(z^{\del{q+1}\add{2}})$ as $z\to 0$, $|r(z)|<1$ for $z>0$ and $\lim_{z\to \infty} r(z)=0$.

		\item\label{item:subspace_approx} Subspace approximation property: Fix $\ga>0$ and let $(V_N, N\in\bN)$ be a sequence of subspaces $V_N\subset V$ such that $\dim(V_N)=N$. There are constants $C, \widetilde \ga>0$, depending on $\ga$ and $d$,
		such that for any $N\in\bN$ and any $v\in\dot{H}^\ga$ there holds
		$$
		\norm{v-P_Nv}_H\le C N^{-\widetilde \ga} \norm{v}_{\dot{H}^\ga},
		\quad\text{and}\quad \norm{A_N^{\nicefrac{\min(\ga, 2)}{2}} P_N v}_H \le C \norm{v}_{\dot{H}^{\min(\ga, 2)}}.
		$$

              \item\label{item:strong} Strong convergence: There are constants
                $C, \widetilde \ga, \gb>0$ such that for $p\in(0,8]$ and all
                discretization parameters $M, N, K\in\bN$ there holds the
                strong error estimate \add{for the standard Euler discretization}
		\begin{equation*}
                  \sup_{0 \leq t \leq T}
                  \|X(t)-\del{Y}\add{\widehat Y}_m^{N,K}\|_{L^p(\gO; H)}\le C \left(M^{-\nicefrac{1}{2}} + N^{-\widetilde \ga} +  K^{-\gb}\right).
		\end{equation*}
	\end{enumerate}
      \end{assumption}

      Note in particular that if the eigenvalues of $Q$ exhibit the decay
      $\eta_j\le C j^{-\gb_0}$ for some $\gb_0>1$, we may choose
      $\gb=\nicefrac{1}{2}(\gb_0-1-\gd)>0$ for an arbitrary small
      $\gd\in(0,\gb_0-1)$ in Item~\ref{item:strong}.

\begin{ex}\label{ex:FEM}
	~
	\begin{enumerate}
		\item Assume the setting in Example~\ref{ex:dotH}, i.e., we consider the stochastic heat equation on a convex domain $\cD$.
		Let $V_N$ be a space of linear finite elements with respect to a regular triangulation of $\cD$ with mesh width $h=\cO(N^{-d})$ for $N\in\bN$.
		Assumption~\ref{ass:approximation} then holds with $\widetilde \ga=\nicefrac{\min(\ga, 2)}{d}$, where $\ga$ is the spatial Sobolev regularity of $X$ as in Assumption~\ref{ass:SPDE}, see e.g. \cite[Chapters 3 and 5]{kruse2014strong}.

		\item For the Dirichlet-Laplacian in Example~\ref{ex:dotH}, we have by Weyl's law that $\gl_n=\cO(n^{\nicefrac{2}{d}})$ as $n\to\infty$.
		For a spectral Galerkin approach with $V_N=\text{span}\set{f_1,\dots, f_N}$, Assumption~\ref{ass:approximation} thus holds with $\widetilde \ga=\nicefrac{\ga}{d}$ and we obtain the stronger relation
		$$
		\norm{A_N^{\nicefrac{\ga}{2}} P_N v}_H \le \norm{v}_{\dot{H}^\ga},
		$$
		for $\ga\ge 2$. However, this will not affect efficiency of our antithetic scheme in Section~\ref{sec:anti-mlmc}, therefore we formulated Item~\ref{item:subspace_approx} in a unified way to encompass spectral Galerkin and finite element approaches.
	\end{enumerate}
\end{ex}

We conclude this section by recording an error estimate on the truncated Milstein approximation.

\begin{thm}\label{thm:stability}
	Let Assumptions~\ref{ass:SPDE} and~\ref{ass:approximation}~\ref{item:rational} hold, and denote by $Y_\cdot^{N,K}:\{0,\dots,M\}\times\gO\to H$ the truncated Milstein approximation in~\eqref{eq:truncated_milstein}.
	\begin{enumerate}
		\item For any $p\in[2,8]$ there is a constant $C>0$, independent of $M, N$ and $K$, such that
		\begin{equation}
			\max_{m=0,\dots, M}\E{\norm{Y_m^{N,K}}_H^p}\le C(1 + \E{\norm{X_0}_H^p}) <\infty.
		\end{equation}

		\item \rdel{Let the standard Euler discretization of~\eqref{ass:SPDE} be given by $\widehat Y_0=P_NX_0$ and
		\bee
		\widehat Y_{m+1}^{N,K} = r(\gD tA_N) \widehat Y_m^{N,K}
		+ r(\gD tA_N)P_NF(\widehat Y_m^{N,K})\gD t
		+r(\gD tA_N)P_NG(\widehat Y_m^{N,K})\gD_m W_K,
		\eee
		for $m=0,\dots, M-1$.}
              For the truncated Milstein scheme in
              \eqref{eq:truncated_milstein} and the Euler scheme in
              \eqref{eq:euler-scheme}, and for any $p\in(0,4]$, there exists a
              constant $C>0$, independent of $M, N$ and $K$, such that
		\begin{equation*}
			\max_{m=0,\dots, M}
			\E{\|Y_m^{N,K}-\widehat Y_m^{N,K}\|_H^p}^{1/p}\le C M^{-\nicefrac{1}{2}}.
		\end{equation*}
	\end{enumerate}
\end{thm}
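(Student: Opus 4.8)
Both assertions follow from discrete Gr\"onwall arguments whose common workhorse is the contractivity $\norm{r(\gD t A_N)P_N}_{\cL(H)}\le 1$; this holds by Assumption~\ref{ass:approximation}~\ref{item:rational}, since $|r(z)|<1$ for $z>0$ and the eigenvalues of $(-A_N)$ are positive, together with $\norm{P_N}_{\cL(H)}\le 1$. By Assumption~\ref{ass:SPDE}~\ref{item:FG-diff} the coefficients $F,G$ have bounded first derivatives, hence are globally Lipschitz with linear growth on $H$; by Proposition~\ref{prop:correction_est} the same holds for $\cG$, and all constants are independent of $M,N,K$. I abbreviate $R:=r(\gD t A_N)P_N$ and write $\gD_m$ for increments over $[t_m,t_{m+1}]$. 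Since $\gD_m W_K$ and $\gD_m\cW_{m,K}$ have vanishing $\cF_{t_m}$-conditional mean, after conditioning every cross term pairing an $\cF_{t_m}$-measurable quantity with these increments vanishes.

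\textbf{Part 1.} Fix $p\in[2,8]$ and write $Y_{m+1}^{N,K}=R\,V_m$ with $V_m:=Y_m^{N,K}+F(Y_m^{N,K})\gD t+M_m$, where $M_m:=G(Y_m^{N,K})\gD_m W_K+\cG(Y_m^{N,K})\gD_m\cW_{m,K}$, so that contractivity gives $\norm{Y_{m+1}^{N,K}}_H\le\norm{V_m}_H$. Expanding $\norm{V_m}_H^p$ about its $\cF_{t_m}$-measurable part $Y_m^{N,K}+F(Y_m^{N,K})\gD t$ and conditioning, the linear term in $M_m$ drops out, and the single-step Burkholder--Davis--Gundy inequality together with the linear growth of $G,\cG$ and the angle-bracket bound~\eqref{eq:W_bracket} (which contributes only at order $\gD t^2$) yields $\bE(\norm{M_m}_H^2\mid\cF_{t_m})\le C\gD t(1+\norm{Y_m^{N,K}}_H)^2$ and $\bE(\norm{M_m}_H^p\mid\cF_{t_m})\le C\gD t^{p/2}(1+\norm{Y_m^{N,K}}_H)^p$. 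After Young's inequality this gives
\begin{equation*}
\bE(\norm{Y_{m+1}^{N,K}}_H^p\mid\cF_{t_m})\le(1+C\gD t)\norm{Y_m^{N,K}}_H^p+C\gD t.
\end{equation*}
Taking expectations and applying the discrete Gr\"onwall inequality with $\bE(\norm{Y_0^{N,K}}_H^p)\le\bE(\norm{X_0}_H^p)$ proves the claim.

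\textbf{Part 2.} By Lyapunov's inequality it suffices to treat $p\in[2,4]$. With $E_m:=Y_m^{N,K}-\widehat Y_m^{N,K}$ we have $E_0=0$, and subtracting~\eqref{eq:euler-scheme} from~\eqref{eq:truncated_milstein} gives
\begin{equation*}
E_{m+1}=R\big(E_m+(F(Y_m^{N,K})-F(\widehat Y_m^{N,K}))\gD t+(G(Y_m^{N,K})-G(\widehat Y_m^{N,K}))\gD_m W_K+\cG(Y_m^{N,K})\gD_m\cW_{m,K}\big).
\end{equation*}
Unrolling this linear recursion yields $E_m=\Sigma_F+\Sigma_G+\Sigma_{\cG}$, where $\Sigma_F:=\sum_{j=0}^{m-1}R^{m-j}(F(Y_j^{N,K})-F(\widehat Y_j^{N,K}))\gD t$ and $\Sigma_G,\Sigma_{\cG}$ are the analogous sums carrying the factors $\gD_j W_K$ and $\cG(Y_j^{N,K})\gD_j\cW_{j,K}$. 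The decisive observation is that, for each fixed $m$, the partial sums $n\mapsto\sum_{j=0}^{n-1}R^{m-j}\cG(Y_j^{N,K})\gD_j\cW_{j,K}$ form an $H$-valued discrete martingale; hence the Burkholder--Davis--Gundy inequality, the contractivity $\norm{R^{m-j}v}_H\le\norm{v}_H$, the bound~\eqref{eq:W_bracket}, Proposition~\ref{prop:correction_est}, and the Part~1 moment bounds give
\begin{equation*}
\bE(\norm{\Sigma_{\cG}}_H^p)\le C\,m^{p/2-1}\sum_{j=0}^{m-1}\bE(\norm{\cG(Y_j^{N,K})\gD_j\cW_{j,K}}_H^p)\le C\,m^{p/2}\gD t^{p}\le C\gD t^{p/2}.
\end{equation*}
Estimating $\Sigma_G$ as a discrete martingale and $\Sigma_F$ by Jensen's inequality, the Lipschitz bounds $\norm{F(Y_j^{N,K})-F(\widehat Y_j^{N,K})}_H$ and $\norm{G(Y_j^{N,K})-G(\widehat Y_j^{N,K})}_{\LHS(\cU,H)}$, both $\le C\norm{E_j}_H$, give $\bE(\norm{\Sigma_F}_H^p)+\bE(\norm{\Sigma_G}_H^p)\le C\gD t\sum_{j=0}^{m-1}\bE(\norm{E_j}_H^p)$. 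Combining the three bounds produces the discrete Gr\"onwall inequality $\bE(\norm{E_m}_H^p)\le C\gD t^{p/2}+C\gD t\sum_{j=0}^{m-1}\bE(\norm{E_j}_H^p)$, whence $\bE(\norm{E_m}_H^p)\le C\gD t^{p/2}$ and therefore $\bE(\norm{E_m}_H^p)^{1/p}\le C\gD t^{1/2}=C\,M^{-1/2}$.

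\textbf{Main obstacle.} The crux is extracting the extra half power of $\gD t$ from the correction term. A naive one-step Gr\"onwall bound treats $\bE(\norm{\cG(Y_m^{N,K})\gD_m\cW_{m,K}}_H^p)=\cO(\gD t^{p})$ as a deterministic source and, for $p>2$, only delivers the suboptimal rate $\gD t^{1/p}$; recognizing the accumulated correction as a martingale in the summation index and invoking the discrete Burkholder--Davis--Gundy inequality is precisely what recovers the sharp $\cO(\gD t^{1/2})$ contribution. Verifying this martingale property and the $K$-uniformity of~\eqref{eq:W_bracket} (which rests on $\tr(Q\otimes Q)=\tr(Q)^2<\infty$) are the steps requiring the most care.
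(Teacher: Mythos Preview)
Your proof is correct. For Part~2 it coincides with the paper's: both unroll the recursion into the three sums $\Sigma_F,\Sigma_G,\Sigma_{\cG}$, apply the Lipschitz bounds on $F,G$ to the first two, and extract the extra half-power of $\gD t$ from $\Sigma_{\cG}$ via the BDG inequality combined with the bracket estimate~\eqref{eq:W_bracket}, before closing with a discrete Gr\"onwall argument.

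For Part~1 you take a genuinely different route. The paper first iterates~\eqref{eq:truncated_milstein} into a telescoped representation and then bounds the accumulated drift, diffusion and correction sums separately by Jensen and BDG (Corollary~\ref{cor:ito-BDG} and Lemma~\ref{lem:ito-BDG}), arriving at a Gr\"onwall inequality for $\bE(\|Y_m^{N,K}\|_H^p)$. You instead run a one-step conditional argument: writing $Y_{m+1}^{N,K}=R\,V_m$ with $V_m=Y_m^{N,K}+F(Y_m^{N,K})\gD t+M_m$, you expand $\|V_m\|_H^p$ around its $\cF_{t_m}$-measurable part, use $\bE((a,M_m)_H\mid\cF_{t_m})=0$ to kill the linear term, and close with Young's inequality. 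Both approaches are standard; yours is more compact and avoids the iterated sum, while the paper's makes the contribution of each term explicit and reuses precisely the same BDG estimates in Part~2. Your argument implicitly relies on the elementary Hilbert-space inequality $\|a+b\|_H^p\le\|a\|_H^p+p\|a\|_H^{p-2}(a,b)_H+C_p(\|a\|_H^{p-2}\|b\|_H^2+\|b\|_H^p)$ for $p\ge2$, which is worth stating once.

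A minor remark on your ``Main obstacle'' paragraph: the diffusion sum $\Sigma_G$ already forces the use of martingale structure (unrolled BDG or your one-step conditional device), since a genuinely naive triangle-inequality bound would produce a factor $(1+C\gD t^{1/2})^M$ that diverges. So $\Sigma_{\cG}$ is not the first place a naive argument breaks, though your observation that BDG on $\Sigma_{\cG}$ recovers the sharp $\cO(\gD t^{p/2})$ contribution is correct.
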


\begin{proof}
	See Appendix~\ref{sec:app1}.
\end{proof}

\add{
      \begin{cor}\label{cor:strong-with-milstein}
        Let Assumptions~\ref{ass:SPDE}, ~\ref{ass:approximation}~\ref{item:rational}
        and \ref{ass:approximation}~\ref{item:strong} hold, then there are constants $C, \widetilde \ga, \gb>0$ such that for $p\in(0,8]$ and all discretization parameters $M, N, K\in\bN$ there holds the strong
        error estimate
        \begin{equation*}
          \max_{m=0,\ldots, M}
          \|X(m \Delta t)-Y_m^{N,K}
          \|_{L^p(\gO; H)}\le C \left(M^{-\nicefrac{1}{2}} + N^{-\widetilde \ga} +  K^{-\gb}\right).
        \end{equation*}
      \end{cor}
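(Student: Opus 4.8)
The plan is to bound the error of the truncated Milstein iterates against the exact solution by inserting the standard Euler scheme $\widehat Y_m^{N,K}$ from~\eqref{eq:euler-scheme} as an intermediate reference and applying the triangle inequality in $L^p(\gO;H)$. For every admissible $p$ and every $m\in\{0,\dots,M\}$ one writes
\[
\|X(m\gD t)-Y_m^{N,K}\|_{L^p(\gO;H)}\le \|X(m\gD t)-\widehat Y_m^{N,K}\|_{L^p(\gO;H)}+\|\widehat Y_m^{N,K}-Y_m^{N,K}\|_{L^p(\gO;H)}.
\]
The two summands are exactly the quantities already controlled in the excerpt: the first is the strong error of the Euler discretization, and the second is the Milstein--Euler difference measured in Theorem~\ref{thm:stability}.

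For the first summand I would invoke Assumption~\ref{ass:approximation}~\ref{item:strong}. Since $m\gD t=t_m$ is a grid point, $X(m\gD t)$ is covered by the maximum over $t\in\bT$ in that estimate, so this term is at most $C(M^{-\nicefrac{1}{2}}+N^{-\widetilde\ga}+K^{-\gb})$, uniformly in $m$ and for the full range $p\in(0,8]$. For the second summand I would apply the second part of Theorem~\ref{thm:stability}, which gives $\|\widehat Y_m^{N,K}-Y_m^{N,K}\|_{L^p(\gO;H)}\le CM^{-\nicefrac{1}{2}}$ uniformly in $m$. Summing the two bounds, taking the maximum over $m\in\{0,\dots,M\}$, and absorbing the $M^{-\nicefrac{1}{2}}$ contribution of the difference term into the identical term already present in the Euler estimate then yields the claim.

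The delicate point, and the step I expect to be the main obstacle, is the mismatch of admissible moment orders: the Milstein--Euler difference estimate is stated only for $p\in(0,4]$, whereas the corollary asserts the bound for $p\in(0,8]$. For $p\in(0,4]$ the argument above is complete as written. For $p\in(4,8]$ I would interpolate, using the log-convexity of the $L^p(\gO;H)$-norms to combine the $L^4$ difference bound of order $M^{-\nicefrac{1}{2}}$ with the uniform moment bounds from the first part of Theorem~\ref{thm:stability} (together with the corresponding moment bounds for $\widehat Y_m^{N,K}$ and for $X$, all finite up to order $8$ by Theorem~\ref{thm:well-posedness} and Assumption~\ref{ass:SPDE}~\ref{item:IC}). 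This is precisely where the available eighth-order integrability is consumed, and the interpolation yields a decay of positive, $p$-dependent order in $M$ for every $p<8$. Verifying that this is enough to reproduce the stated bound at the top of the range, or alternatively re-running the Gr\"onwall-type argument behind the second part of Theorem~\ref{thm:stability} directly under the eighth-order moment control, is the one step that genuinely requires attention; once the difference term is controlled on the full range, the result follows by collecting constants in the triangle inequality above.
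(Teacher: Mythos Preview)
Your approach is exactly the paper's: the proof there reads in full ``This follows by a simple application of the triangle inequality, Assumption~\ref{ass:approximation}-\ref{item:strong} and Theorem~\ref{thm:stability}.'' You are in fact more careful than the paper in flagging the moment-range mismatch between Theorem~\ref{thm:stability} part~2 ($p\le 4$) and the corollary ($p\le 8$), which the one-line proof does not address.

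On that point: the interpolation route you sketch does \emph{not} recover the stated rate. Interpolating between the $L^4$ bound of order $M^{-1/2}$ and a mere uniform $L^8$ moment bound of order~$1$ gives only
\[
\|\widehat Y_m^{N,K}-Y_m^{N,K}\|_{L^p(\gO;H)}\le C\,M^{-(8-p)/(2p)},\qquad p\in(4,8),
\]
which degenerates to no decay as $p\to 8$. The alternative you mention---re-running the Gr\"onwall argument behind Theorem~\ref{thm:stability} part~2 for $p\in(4,8]$---is the correct fix: inspecting that proof, the only moment requirement is $\E{\|Y_j^{N,K}\|_H^p}<\infty$, which part~1 supplies up to $p=8$, so the restriction $p\le 4$ in the statement of part~2 is inessential. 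With that extension, your triangle-inequality argument goes through on the full range.
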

      \begin{proof}
        This follows by a simple application of the triangle inequality,
        Assumption~\ref{ass:approximation}-\ref{item:strong} and
        Theorem~\ref{thm:stability}.
      \end{proof}
    }

    \del{Item~\ref{item:strong}}
    \add{Corollary~\ref{cor:strong-with-milstein}} essentially states that the truncated Milstein term does neither increase, nor spoil the strong rates of convergence, as compared to the standard Euler scheme. \del{We justify this in Theorem~\ref{thm:stability} below.}
    However, the Milstein correction accelerates variance decay for an antithetic coupling in the MLMC estimator.
\section{Antithetic \MMLMC-Milstein Scheme}
\label{sec:anti-mlmc}

To construct an antithetic estimator, we consider coupled "coarse" and "fine" approximations of $X$ given by the truncated Milstein scheme in~\eqref{eq:truncated_milstein}, with refinement parameters $M, N$ and $K$ adjusted accordingly.
First, let $Y_\cdot^c:=Y_\cdot^{N,K}$ be the coarse step discretization with a fixed time step $\gD t=\nicefrac{T}{M}$ and fixed $N,K\in\bN$ as in~\eqref{eq:truncated_milstein}.
That is, $Y_m^c$ is given by $Y_0^c:=P_NY_0$ and
\be\label{eq:coarse}
\begin{split}
	Y_{m+1}^c
	&= r(\gD tA_N)P_N\LR{ Y_m^c
		+ F(Y_m^c)\gD t+G(Y_m^c)\gD_m W_K
		+ \cG(Y_m^c) \gD_m\cW_{m,K}},\quad m=0,\dots, M-1.
\end{split}
\ee
For the corresponding fine approximation, let $\mfd t:=\nicefrac{\gD t}{2}$ and set a cutoff index $K_f\in\bN$ such that $K_f\ge K$ for the noise approximation. We define the fine increments
\begin{alignat*}{2}
	&\mfd_m W_{K_f}:= W_{K_f}(t_m+\mfd t) - W_{K_f}(t_m),\quad
	&&\mfd_{m+1/2} W_{K_f} := W_{K_f}(t_{m+1}) - W_{K_f}(t_m+\mfd t) \\
	&\mfd_m\cW_{m,K_f} := \cW_{m,K_f}(t_m+\mfd t), \quad
	&&\mfd_{m+1/2}\cW_{m,K_f} := \mfd_{m+1/2} W_{K_f}\otimes \mfd_{m+1/2} W_{K_f}
	-\mfd t\sum_{k=1}^{K_f} \eta_k e_k\otimes e_k.
\end{alignat*}
Note that $\gD_m \cW_{m,K_f} \neq \mfd_{m+1/2}\cW_{m,K_f} + \mfd_m\cW_{m,K_f}$, but there holds
\begin{equation}\label{eq:tensor_increment}
	\gD_m\cW_{m,K_f}= \mfd_{m+1/2}\cW_{m,K_f} + \mfd_m\cW_{m,K_f}
	+ \mfd_{m+1/2}W_{K_f}\otimes \mfd_mW_{K_f}
	+ \mfd_mW_{K_f}\otimes \mfd_{m+1/2}W_{K_f}.
\end{equation}
We further consider a finite dimensional subspace $V_{N_f}\subset V$ with $\dim(V_{N_f})=N_f$ such that $N_f\ge N=\dim(V_N)$.
The corresponding discrete operator is denoted by $A_{N_f}:V_{N_f}\to V_{N_f}$, its associated semigroup by $S_{N_f}$, and $P_{N_f}:H\to V_{N_f}$ is the $H$-orthogonal projection onto $V_{N_f}$.
Finally, we set a cutoff index $K_f\in\bN$ such that $K_f\ge K$ for the noise approximation on the fine scale.

The \emph{fine step discretization} $Y^f_\cdot:\gO\times\{0,\nicefrac{1}{2},1,\dots,M-\nicefrac{1}{2}, M\}\to V_{N_f}$ is then given by $Y_0^f:=P_{N_f} X_0$,
\be\label{eq:fine1}
\begin{split}
	Y_{m+1/2}^f
	&= r(\mfd tA_{N_f})P_{N_f}
	\LR{ Y_m^f+F(Y_m^f)\mfd t+G(Y_m^f)\mfd_m W_{K_f} + \cG(Y_m^f)\mfd_m\cW_{m,K_f}},
\end{split}
\ee
and
\be\label{eq:fine2}
\begin{split}
	Y_{m+1}^f
	&= r(\mfd tA_{N_f})P_{N_f}
	\LR{ Y_{m+1/2}^f+F(Y_{m+1/2}^f)\mfd t+G(Y_{m+1/2}^f)\mfd_{m+1/2} W_{K_f} + \cG(Y_{m+1/2}^f)\mfd_{m+1/2}\cW_{m,K_f}}.
\end{split}
\ee
The \emph{antithetic counterpart} $Y^a_\cdot:\gO\times\{0,\nicefrac{1}{2},1,\dots,M-\nicefrac{1}{2}, M\}\to V_{N_f}$ to $Y^f_\cdot$ is defined via $Y_0^a:=P_{N_f} X_0$,
\be\label{eq:anti1}
\begin{split}
	Y_{m+1/2}^a
	&= r(\mfd tA_{N_f})P_{N_f}
	\LR{ Y_m^a+F(Y_m^a)\mfd t+G(Y_m^a)\mfd_{m+1/2} W_{K_f} + \cG(Y_m^a)\mfd_{m+1/2}\cW_{m,K_f}},
\end{split}
\ee
and
\be\label{eq:anti2}
\begin{split}
	Y_{m+1}^a
	&= r(\mfd tA_{N_f})P_{N_f}
	\LR{ Y_{m+1/2}^a+F(Y_{m+1/2}^a)\mfd t+G(Y_{m+1/2}^a)\mfd_m W_{K_f} + \cG(Y_{m+1/2}^a)\mfd_m\cW_{m,K_f}}.
\end{split}
\ee

The foundation of our MLMC Milstein approach is to show that the difference of the \emph{antithetic average}
\begin{equation}\label{eq:antiavg}
	\ol Y_m:=\frac{Y_m^f+Y_m^a}{2}, \quad m=0,\dots, M,
\end{equation}
to the coarse scale $Y_m^c$ approximations exhibits a rapid decay in mean-square.
This property is established in our main result:

\begin{thm}
	\label{thm:MainRes}
	Let Assumptions~\ref{ass:SPDE} and~\ref{ass:approximation} hold for some $\ga\ge1$,
	and let $M, N_f, N, K_f, K\in\bN$ be such that $N_f\ge N$ and $K_f\ge  K$.
	Further, let $Y_\cdot^c$ be as in~\eqref{eq:coarse} and let $\ol Y_\cdot$ be the antithetic average of the fine approximations as in~\eqref{eq:antiavg}.
	Then, there is a constant $C>0$, independent of $M, N, $ and $K$ such that
	\begin{equation}\label{eq:var-decay}
		\max_{m=0,\dots, M}\E{\norm{\ol Y_m-Y_m^c}_H^2}
		\le C\left(M^{-\min(\ga,2)} + N^{-2\widetilde \ga} + K^{-2\gb}\right).
	\end{equation}
\end{thm}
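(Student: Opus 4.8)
The estimate cannot be reached by inserting the mild solution $X(t_m)$ and applying the triangle inequality through Corollary~\ref{cor:strong-with-milstein}: the strong errors of $\ol Y_m$ and of $Y_m^c$ are each only of order $M^{-\nicefrac{1}{2}}$ in $L^2(\gO;H)$, so their sum yields the suboptimal rate $M^{-1}$, whereas the target $M^{-\min(\ga,2)}$ is strictly smaller for $\ga\ge1$. The gain comes entirely from the \emph{coupling} of the three schemes, so I would exploit it directly and disentangle the time-step, spatial and noise contributions by a hybrid argument. Introduce the auxiliary antithetic average $\ol Y_m^{(N,K)}$ obtained from~\eqref{eq:fine1}--\eqref{eq:antiavg} by replacing the fine data $(V_{N_f},A_{N_f},P_{N_f},K_f)$ with the coarse data $(V_N,A_N,P_N,K)$ while keeping the fine step $\mfd t$ and the antithetic coupling, and split
\begin{equation*}
  \norm{\ol Y_m-Y_m^c}_{L^2(\gO;H)}
  \le \norm{\ol Y_m-\ol Y_m^{(N,K)}}_{L^2(\gO;H)}
  +\norm{\ol Y_m^{(N,K)}-Y_m^c}_{L^2(\gO;H)}.
\end{equation*}
The first term compares two antithetic schemes on the \emph{same} time grid differing only in spatial resolution ($N_f\ge N$) and cutoff ($K_f\ge K$); a discrete variation-of-constants/telescoping argument mirroring the strong error analysis behind Assumption~\ref{ass:approximation}\ref{item:strong}---using the subspace property Assumption~\ref{ass:approximation}\ref{item:subspace_approx}, the noise tail $\sum_{k>K}\eta_k$, and the growth bounds~\eqref{eq:tensor_norm}--\eqref{eq:tensor_norm2}---controls it by $C(N^{-\widetilde\ga}+K^{-\gb})$, the temporal contributions cancelling because the grid is shared. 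This needs $\dot H^\ga$ moment bounds for the antithetic iterates, which I would establish by adapting Theorem~\ref{thm:stability}(1) to the $\dot H^\ga$-norm via Assumption~\ref{ass:SPDE}\ref{item:smooth-coefficients} and~\eqref{eq:tensor_norm2}.

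The heart is the second, purely temporal term on fixed $V_N$ and cutoff $K$. Writing $R:=r(\mfd tA_N)P_N$ and composing the fine half-steps~\eqref{eq:fine1}--\eqref{eq:fine2} and the antithetic half-steps~\eqref{eq:anti1}--\eqref{eq:anti2}, I would Taylor-expand $F,G,\cG$ at the intermediate stages around $Y_m^f$ and $Y_m^a$, using the bounded first and second Fr\'echet derivatives of Assumption~\ref{ass:SPDE}\ref{item:FG-diff}. The decisive contribution is the iterated-integral cross term from expanding $G(Y_{m+1/2}^f)\mfd_{m+1/2}W_{K}$, which to leading order equals $R\,G'(\ol Y_m)\bigl(RG(\ol Y_m)\mfd_m W_K\bigr)\mfd_{m+1/2}W_K$ on the fine path and $R\,G'(\ol Y_m)\bigl(RG(\ol Y_m)\mfd_{m+1/2}W_K\bigr)\mfd_m W_K$ on the antithetic path. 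Averaging, the antisymmetric (L\'evy-area) parts cancel and the symmetric combination reproduces exactly the cross terms $\mfd_{m+1/2}W_K\otimes\mfd_m W_K+\mfd_m W_K\otimes\mfd_{m+1/2}W_K$ of the decomposition~\eqref{eq:tensor_increment}; together with the averaged explicit corrections $\tfrac12(\cG(\cdot)\mfd_m\cW_{m,K}+\cG(\cdot)\mfd_{m+1/2}\cW_{m,K})$ accounting for the remaining two terms, and recalling the identity~\eqref{eq:tensor_correction} for $\cG$, this shows $\ol Y_{m+1}^{(N,K)}$ reproduces one coarse Milstein step~\eqref{eq:coarse} up to remainders.

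Collecting terms gives a recursion $D_{m+1}=r(\mfd tA_N)^2D_m+L_m(D_m)+\eta_m+\zeta_m$ for $D_m:=\ol Y_m^{(N,K)}-Y_m^c$, with $D_0=0$; here $L_m(D_m)$ is linear in $D_m$ (for Gronwall), $\eta_m$ is an $\cF_{t_m}$-conditionally centred increment, and $\zeta_m$ is a bias remainder that also contains the semigroup discrepancy $(r(\mfd tA_N)^2-r(\gD tA_N))Y_m^c$. Iterating into the discrete variation-of-constants form $D_m=\sum_{j<m}r(\mfd tA_N)^{2(m-1-j)}\bigl(L_j(D_j)+\eta_j+\zeta_j\bigr)$ and using stability $\norm{r(\mfd tA_N)}\le1$ (Assumption~\ref{ass:approximation}\ref{item:rational}), I would bound the three parts separately: the centred $\eta_j$ sum by orthogonality, needing $\E{\norm{\eta_j}_H^2\mid\cF_{t_j}}\le C(\mfd t)^{1+\min(\ga,2)}(1+\norm{\ol Y_j}_{\dot H^\ga}^2)$ so that $\sum_j\E{\norm{\eta_j}_H^2}\le CM^{-\min(\ga,2)}$; the Taylor bias in $\zeta_j$ by $\norm{\zeta_j}_{L^2(\gO;H)}\le C(\mfd t)^{1+\min(\ga,2)/2}$ summing linearly to $CM^{-\min(\ga,2)/2}$; and the semigroup discrepancy by a telescoping smoothing estimate (crucially \emph{not} a naive per-step sum), using $|r(z)^2-r(2z)|\le Cz^{\min(\ga,2)/2}$ and the smoothing of the remaining propagators to obtain the global rate $CM^{-\min(\ga,2)/2}$. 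Squaring and combining with a discrete Gronwall step for the $L_j(D_j)$ terms then yields $\max_m\E{\norm{D_m}_H^2}\le CM^{-\min(\ga,2)}$; the exponent saturates at $2$ because of the first order of $r$ and the second-derivative structure of the Milstein correction.

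The main obstacle I anticipate is the rigorous version of the cross-term cancellation in the second paragraph. Because the cross terms are only quadratic in the noise, each is individually of size $O(\mfd t)$, so the cancellation must be tracked to second order while simultaneously propagating $\dot H^\ga$ regularity through $G,G'$ and $\cG$. In particular this forces a \emph{coupled} analysis: one must control not only the small target $D_m$ but also the antithetic spread $Y_m^f-Y_m^a$, which is only of order $(\mfd t)^{\nicefrac{1}{2}}$ and enters $D_{m+1}$ quadratically through the second derivatives $F'',G'',\cG'$; showing that these quadratic contributions land in the centred part $\eta_m$ rather than the bias is what ultimately secures the $\min(\ga,2)$ rate. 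The bounded second derivatives of Assumption~\ref{ass:SPDE}\ref{item:FG-diff}, the growth bounds~\eqref{eq:tensor_norm}--\eqref{eq:tensor_norm2}, the $\dot H^\ga$ moment and regularity estimates, and the martingale structure of $\cW_{m,K}$ together with~\eqref{eq:W_bracket} are the tools I expect to make these remainder bounds close.
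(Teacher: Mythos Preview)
Your core mechanism is correct and matches the paper: the antithetic averaging cancels the antisymmetric (L\'evy-area) cross terms, the symmetric cross terms together with the explicit $\cG$-corrections reconstitute the coarse Milstein step via~\eqref{eq:tensor_increment}, and one closes with a discrete Gr\"onwall argument separating conditionally centred and bias remainders. The paper organises this as Proposition~\ref{prop:fine-error}, Corollary~\ref{cor:anti-diff} and Proposition~\ref{prop:avg-diff}, which produce exactly the one-step expansion of $\ol Y_{m+1}$ you describe, and the final proof iterates and applies Gr\"onwall much as you outline.

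Where your route diverges is the choice of auxiliary. You insert a \emph{hybrid} scheme $\ol Y^{(N,K)}$ (coarse space/noise, fine time) to decouple the temporal from the spatial/noise error. The paper instead inserts a \emph{semi-discrete} scheme $\widetilde Y^f$ built with the full operator $A$ (no spatial projection at all); see~\eqref{eq:semi-fine1}--\eqref{eq:semi-fine2} and Lemma~\ref{lem:semi-error}. The reason is exactly the point you flag as needing $\dot H^\ga$ moment bounds for the discrete iterates: that step does not go through in general. For $\ga>1$ and, say, piecewise-linear finite elements, $V_N\not\subset\dot H^\ga$, so the iterates $Y^f_m,\ol Y^{(N,K)}_m\in V_N$ have no uniform $\dot H^\ga$ bound, and ``adapting Theorem~\ref{thm:stability}(1) to the $\dot H^\ga$-norm'' fails because $r(\mfd tA_N)P_N$ is only known to be stable on $H$, not on $\dot H^\ga$ (Lemma~\ref{lem:rational-approx} gives $\dot H^\ga$-stability only for $r(\mfd tA)$). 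This regularity is precisely what you need to extract the rates $M^{-\min(\ga,2)/2}$ and $N^{-\widetilde\ga}$ from the operator differences $r(\mfd tA_N)^2-r(\gD tA_N)$ and $P_{N_f}-P_N$ via Lemma~\ref{lem:rational-approx} and Assumption~\ref{ass:approximation}\ref{item:subspace_approx}. The paper's fix is to carry $\widetilde Y^f_m\in\dot H^\ga$ alongside $Y^f_m$, prove the auxiliary bound $\E{\|Y^f_m-\widetilde Y^f_m\|_H^p}\le C(M^{-p}+N_f^{-p\widetilde\ga}+K_f^{-p\gb})$, and, wherever a $\dot H^\ga$ input is required, write $F(Y^f_m)=F(\widetilde Y^f_m)+F'(\xi)(Y^f_m-\widetilde Y^f_m)$ so that the first piece has the needed regularity and the second is already small in $H$. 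Your smoothing-of-propagators idea for the semigroup discrepancy is an alternative in spirit, but it is not detailed and would still leave the first-term bound $\|\ol Y_m-\ol Y^{(N,K)}_m\|$ without the $\dot H^\ga$ input it needs; introducing a semi-discrete proxy as the paper does is the cleanest way to close both gaps simultaneously.
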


\begin{proof}
	See Appendix~\ref{sec:app2}.
\end{proof}

\begin{rem}
  For the truncated Milstein scheme without antithetic correction,
  \add{Corollary~\ref{cor:strong-with-milstein} implies}
	\begin{equation}\label{eq:var-decay-EM}
		\max_{m=0,\dots, M}\E{\norm{Y_m^f-Y_m^c}_H^2}
		\le C\left(M^{-1} + N^{-2\widetilde \ga} + K^{-2\gb} \right),
	\end{equation}
	and therefore a slower variance decay with respect to the time step $\gD t = T/M$.
\end{rem}

\section{Multilevel Monte Carlo Approximation}
\label{sec:MLMC}

Let $Z:\gO\to\bR$ be a real-valued, integrable random variable, and let $(Z^{(i)}, i\in\bN)$ be a sequence of independent copies of $Z$. For any finite number of samples $\mfN\in\bN$ we define the singlelevel \MC\, estimator of $\E{Z}$ by
\begin{equation}
	E_\mfN(Z):=\frac{1}{\mfN}\sum_{i=1}^\mfN Z^{(i)}.
\end{equation}

We aim to estimate $\E{\Psi(X_T)}$ for a given functional $\Psi:H\to\bR$ by \MLMC\,(MLMC) methods.
To this end, let $M_0, L\in\bN$ and let $M_\ell = M_02^\ell$ for $\ell=1,\dots, L$.
Based on Theorem~\ref{thm:MainRes}, we balance the error contributions in~\eqref{eq:var-decay} on all levels by setting the remaining approximation parameters as
\begin{equation}\label{eq:balance}
  N_\ell :=
  \lceil M_\ell^{\nicefrac{\min(\ga,2)}{2\widetilde \ga}} \rceil \quad\text{and}\quad
	K_\ell := \lceil M_\ell^{\nicefrac{\min(\ga,2)}{2\gb}} \rceil, \quad \ell=1,\dots, L.
\end{equation}

We denote for $\ell=2,\dots, L$ by $Y^{c,\ell-1}$ the coarse step approximation in~\eqref{eq:coarse} with discretization parameters given by $M_{\ell-1}, N_{\ell-1}, K_{\ell-1}$.
For $\ell=1,\dots, L$, we let denote by $Y^{f,\ell}, Y^{a,\ell}$ the
fine step discretization and its antithetic counterpart, respectively, both with discretization parameters $M_{f} = M_\ell=2M_{\ell-1}$ and $N_f=N_\ell, K_f=K_\ell$.
Furthermore, we define $\ol Y^\ell=\nicefrac{1}{2}(Y^{f,\ell}+ Y^{a,\ell})$,
\begin{equation}\label{eq:qoi}
	\Psi_0^c:=0,
	\quad,
	\Psi_\ell^c:=\Psi(Y_{M_\ell}^{c,\ell}), \quad\text{and}\quad
	\ol \Psi_\ell:=\frac{\Psi(Y_{M_\ell}^{f,\ell})+\Psi(Y_{M_\ell}^{a,\ell})}{2}, \quad\text{for $\ell=1,\dots,L$.}
\end{equation}

We introduce the \emph{antithetic \MLMC\, estimator} as
\begin{equation}\label{eq:mlmc-est}
	E_L^{\text ML}(\Psi):=\sum_{\ell=1}^L E_{\mfN_\ell}(\ol \Psi_\ell - \Psi_{\ell-1}^c),
\end{equation}
where $\mfN_1,\dots, \mfN_L\in\bN$ are level-dependent numbers of samples.
Since $Y_M^{f,\ell}\stackrel{d}{=}Y_M^{a,\ell}$, it holds that
\begin{equation*}
	\E{E_L^{\text ML}(\Psi)}=\E{\Psi(Y_{M_L}^{f,L})}.
\end{equation*}

To analyze the mean-squared error (MSE) and computational complexity of the estimator in~\eqref{eq:mlmc-est}, we formulate the following assumptions on the sample complexity and the weak error.

\begin{assumption}\label{ass:mlmc}
	\begin{enumerate}[label=(\roman*)]
		For fixed $M_0\in\bN$ and any $\ell\in\bN$, let $M_\ell=M_02^\ell$ and $N_\ell, K_\ell\in\bN$ be as in~\eqref{eq:balance}.
		~
		\item\label{item:comlexity} Sample complexity:
		Denote by $\cC_\ell$ the cost of generating one sample of $\ol\Psi^\ell$ on any a given refinement level $\ell\in\bN$.
		There are constants $C>0$ and $\gg>0$ such that for any $\ell\in\bN$ there holds
		\begin{equation*}
			\cC_\ell\le C M_\ell^{1+\gg}.
		\end{equation*}

		\item\label{item:weak} Weak convergence:
		Let $\widetilde \ga$ and $\gb$ be as in Assumption~\ref{ass:approximation}~\ref{item:strong}, let $\Psi:H\to \bR$ be Fr\'echet differentiable with bounded derivative, and let $\gd\in(0,1)$ be arbitrary small.
		There is a constant $C=C(\Psi, \gd)>0$ such that for $\ell\in\bN$ there holds
		\begin{equation*}
			\abs{\bE(\Psi(X(T)))-\bE(\Psi(Y_{M_\ell}^{f,\ell}))}
			\le C M_\ell^{-1+\gd}.
		\end{equation*}

	\end{enumerate}
\end{assumption}

\begin{rem}
	\label{rem:weak_approx}
	The parameter $\gg$ in Item~\ref{item:comlexity} essentially depends on
        the cost of evaluating $G$ \add{and its Fr\'echet derivative \(G'\),
          or equivalently \(\cG\), in~\eqref{eq:truncated_milstein}}. In case
        there is some sparsity to exploit in $G$, the cost of one evaluation
        may be as low as $\cO(\max(N_\ell,K_\ell))$, in which
        case~\eqref{eq:balance} yields that $\gg =
        \nicefrac{1}{2}\cdot\nicefrac{\min(\ga, 2)}{\min(\widetilde\ga, \gb)}$,
        see for instance the numerical example in Section~\ref{sec:numerics}.
        On the other hand, the cost of one evaluation may be as large as
        $\cO(N_\ell^2K_\ell^2)$, if evaluating $G$ \add{and \(G'\)} entails full matrices and nested
        summations in the discretization scheme, which makes each sample
        significantly more expensive.

	Assumption~\ref{ass:mlmc}~\ref{item:weak} on the weak approximation
        error is natural, one often recovers (almost) twice the strong rates
        \add{of an Euler scheme} for semi-linear, parabolic SPDEs, see e.g.
        \cite[Theorem 5.12]{kruse2014strong} or \cite[Theorem 4.5]{KLS15}. In
        other words, the weak error with respect to $N_\ell$ and $K_\ell$ is of
        order $\cO(N_\ell^{-2\widetilde\ga+\gd}+K_\ell^{-2\gb+\gd})$ for any
        arbitrary small $\gd>0$, and the balancing in~\eqref{eq:balance}
        yields with $\ga\ge 1$ that
	\begin{equation*}
		\abs{\bE(\Psi(X(T)))-\bE(\Psi(Y_{M_\ell}^{f,\ell}))}
		=
		\cO(M_\ell^{-1+\gd} + N_\ell^{-2\widetilde\ga+\gd}+K_\ell^{-2\gb+\gd})
		=
		\cO(M_\ell^{-1+\gd}).
	\end{equation*}

\end{rem}

\begin{thm}\label{thm:mlmc-comp}
	Let Assumptions~\ref{ass:SPDE},~\ref{ass:approximation} and~\ref{ass:mlmc} hold \add{and let \(\Psi\in C_b^2(H,\bR)\)}.
	For any $\eps\in(0,e^{-1})$, there exists an antithetic \MLMC-Milstein estimator \del{$E_L(\Psi(Y_M))$}
        \add{\(E_L^{\text ML}(\Psi)\)} of $\bE(\Psi(X(T)))$ such that there holds
	\begin{equation*}
		\bE\LR{\abs{E_L^{\text ML}(\Psi)-\bE(\Psi(X(T)))}^2}<\eps^2,
	\end{equation*}
	and the computational complexity $\cC_{\rm ML}$ of \del{$E_L(\Psi(Y_M))$}\add{\(E_L^{\text ML}(\Psi)\)} is bounded by
	\begin{equation}\label{eq:mlmc-complexity}
		\cC_{\rm ML}\le
		\begin{cases}
			C \eps^{-2}, &\quad \min(\ga, 2)>1+\gg,\\
			C \eps^{-2}|\log(\eps)|^2, &\quad \min(\ga, 2)=1+\gg,\\
			C \eps^{-2-\frac{1+\gg-\min(\ga, 2)}{1-\gd}}, &\quad \min(\ga, 2)<1+\gg.
		\end{cases}
	\end{equation}
\end{thm}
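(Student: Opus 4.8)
The plan is to reduce the statement to the classical multilevel complexity theorem of \cite{giles2015multilevel} by verifying its three structural ingredients — a weak (bias) rate, a per-level variance-decay rate, and a per-level cost bound — with explicitly identified exponents, and then to run the standard sample-allocation argument. First I would split the mean-squared error as $\E{\abs{E_L^{\text ML}(\Psi)-\bE(\Psi(X(T)))}^2} = (\text{bias})^2 + \var(E_L^{\text ML}(\Psi))$. By the unbiasedness identity $\E{E_L^{\text ML}(\Psi)}=\E{\Psi(Y_{M_L}^{f,L})}$ recorded after~\eqref{eq:mlmc-est}, the bias equals the weak error, so Assumption~\ref{ass:mlmc}~\ref{item:weak} furnishes a bias of order $M_L^{-(1-\gd)}$; the per-level cost $\cC_\ell\le C M_\ell^{1+\gg}$ is Assumption~\ref{ass:mlmc}~\ref{item:comlexity}; and by independence across levels $\var(E_L^{\text ML}(\Psi))=\sum_{\ell=1}^L \mfN_\ell^{-1}V_\ell$ with $V_\ell:=\var(\ol\Psi_\ell-\Psi_{\ell-1}^c)$.

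The key step — and the step I expect to be the main obstacle — is the per-level variance bound $V_\ell\le C M_\ell^{-\min(\ga,2)}$. Here one must \emph{not} simply use that $\Psi$ is Lipschitz, since that would only transfer the slow strong rate $M_\ell^{-1/2}$ and discard the antithetic gain. Instead, following the Giles–Szpruch device, I would use $\Psi\in C_b^2$ and expand to second order about $Y^{c}:=Y_{M_{\ell-1}}^{c,\ell-1}$ for both the fine path $Y^{f,\ell}$ and its antithetic partner $Y^{a,\ell}$ (for $\ell\ge2$; the base level $\ell=1$ is bounded trivially since $\Psi$ is bounded). Averaging the two expansions, the first-order terms combine into $\Psi'(Y^{c})(\ol Y-Y^{c})$, while the two second-order remainders remain quadratic in $Y^{f,\ell}-Y^{c}$ and $Y^{a,\ell}-Y^{c}$. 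Bounding the first-order contribution by $\norm{\Psi'}_\infty^2\,\E{\norm{\ol Y_{M_\ell}-Y^{c}}_H^2}$ and invoking Theorem~\ref{thm:MainRes} — whose right-hand side is $\cO(M_\ell^{-\min(\ga,2)})$ once the balancing~\eqref{eq:balance} forces $N_{\ell-1}^{-2\widetilde\ga},K_{\ell-1}^{-2\gb}\le M_{\ell-1}^{-\min(\ga,2)}$ — yields the target rate. Bounding the remainders by $\norm{\Psi''}_\infty^2\LR{\E{\norm{Y^{f,\ell}-Y^{c}}_H^4}+\E{\norm{Y^{a,\ell}-Y^{c}}_H^4}}$ and using the $L^4$ strong rate (Corollary~\ref{cor:strong-with-milstein}, valid for all $p\le 8$, applied to the fine path and analogously to the reordered antithetic path, and noting $N_\ell^{-\widetilde\ga},K_\ell^{-\gb}\le M_\ell^{-1/2}$ under~\eqref{eq:balance}) gives $\cO(M_\ell^{-2})$, which is subdominant because $\min(\ga,2)\le 2$. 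Hence $V_\ell\le C M_\ell^{-\min(\ga,2)}$.

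With the weak-rate exponent $1-\gd$, the variance exponent $\min(\ga,2)$, and the cost exponent $1+\gg$ in hand, I would finish by the routine optimization. Choose $L=\cO(\abs{\log\eps})$ so that the squared bias is below $\eps^2/2$, which forces $M_L\le C\eps^{-1/(1-\gd)}$; then allocate $\mfN_\ell\propto\sqrt{V_\ell/\cC_\ell}$ to meet the budget $\sum_\ell \mfN_\ell^{-1}V_\ell\le\eps^2/2$, giving a total cost proportional to $\eps^{-2}\LR{\sum_{\ell=1}^L\sqrt{V_\ell\cC_\ell}}^2$ with $\sqrt{V_\ell\cC_\ell}\le C M_\ell^{(1+\gg-\min(\ga,2))/2}$. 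The three regimes of~\eqref{eq:mlmc-complexity} then correspond exactly to the geometric sum $\sum_\ell M_\ell^{(1+\gg-\min(\ga,2))/2}$ being summable when $\min(\ga,2)>1+\gg$ (cost $\cO(\eps^{-2})$), growing like $L\sim\abs{\log\eps}$ when $\min(\ga,2)=1+\gg$ (cost $\cO(\eps^{-2}\abs{\log\eps}^2)$), and being dominated by its last term $M_L^{(1+\gg-\min(\ga,2))/2}\sim\eps^{-(1+\gg-\min(\ga,2))/(2(1-\gd))}$ when $\min(\ga,2)<1+\gg$ (cost $\cO(\eps^{-2-(1+\gg-\min(\ga,2))/(1-\gd)})$). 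I expect the only genuine work to lie in the variance step above; the bias truncation and sample allocation are standard and can be quoted directly from \cite{giles2015multilevel}.
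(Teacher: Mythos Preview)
Your proposal is correct and follows essentially the same route as the paper: you verify the bias bound via Assumption~\ref{ass:mlmc}~\ref{item:weak}, the cost bound via Assumption~\ref{ass:mlmc}~\ref{item:comlexity}, and the crucial variance bound $V_\ell\le C M_\ell^{-\min(\ga,2)}$ by a second-order Taylor expansion of $\Psi$ about the coarse path so that the averaged first-order term is controlled by Theorem~\ref{thm:MainRes} and the quadratic remainders by the $L^4$ strong rate of Corollary~\ref{cor:strong-with-milstein}, before invoking the Giles complexity theorem. The paper's proof is terser (it simply cites \cite[Theorem~2.1]{giles2015multilevel} for the final allocation argument you spell out), but the decomposition, key lemmas, and exponents are identical.
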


\begin{proof}
	See Appendix~\ref{sec:app3}.
\end{proof}

\begin{ex}
	To show that all three cases in~\eqref{eq:mlmc-complexity} are conceivable, recall Example~\ref{ex:FEM}, where $\widetilde \ga=\nicefrac{\min(\ga, 2)}{d}$ (FEM) or  $\widetilde \ga=\nicefrac{\ga}{d}$ (spectral Galerkin method).
	Assuming for simplicity that $\ga\in[1,2]$, $\widetilde \ga=\nicefrac{\ga}{d}$, and that $G$ may be  evaluated with complexity $\cO(\max(N_\ell,K_\ell))$. The error balancing~\eqref{eq:balance} then yields
	\begin{equation*}
		\max(N_\ell,K_\ell) =
		M_\ell^{\ga\max(\nicefrac{d}{2\ga},\nicefrac{1}{2\gb})},
	\end{equation*}
	and thus Item~\ref{item:comlexity} holds with
	\begin{equation*}
		\gg = \ga\max(\nicefrac{d}{2\ga},\nicefrac{1}{2\gb}) < \ga - 1
		\quad \Longleftrightarrow \quad
		\max(\nicefrac{d}{2}, \nicefrac{\ga}{2\gb}) < \ga - 1.
	\end{equation*}
\end{ex}

\section{Numerics}
\label{sec:numerics}

Let {$\cD = [0,1]^d$}{} for $d\in\{1,2,3\}$, let
$H:=L^2(\cD)$ and denote by $A:=\laplace $ the Laplace-operator with
homogeneous Dirichlet boundary conditions. We further assume that $U=H$ and
denote by $(e_k, k\in\bN)$ and $(\gl_k, k\in\bN)$ the eigenfunctions and
eigenvalues of $-A$, respectively. By Weyl's law
$\gl_k=\cO(k^{\nicefrac{2}{d}})$ for $k\in\bN$ and for rectangular and circular
domains the precise eigenfunctions and eigenvalues of \(\laplace\) are given in closed form,
see e.g. \cite[Section
3]{grebenkov2013geometrical}. %
We consider the stochastic heat equation given by
\begin{equation}\label{eq:SHE}
	dX(t) = \laplace X(t) dt + G(X(t))dW(t),\quad t\in[0,1], \quad X(0) = X_0,
\end{equation}
for $X_0\in L^8(\gO; \dot H^2)$. The driving noise is modeled by a $Q$-Wiener
process $W:\gO\times [0,1] \to H$ with covariance operator $Q:=((-\laplace)^{-s})$ for a smoothness
parameter $s>0$. Since $\gl_k=\cO(k^{\nicefrac{2}{d}})$ for $k\in\bN$, $Q$ is
trace-class for $s>\nicefrac{d}{2}$, in which case $W$ admits
the \KL expansion
\begin{equation}\label{eq:KL-W}
	W(t) :=\sum_{k\in\bN} \eta_{k}^{1/2} w_k(t) e_k ,
\end{equation}
where \(\eta_{k} = \lambda_{k}^{-s}\) and $w_1, w_2, \dots$ are independent
one-dimensional Brownian motions. Hence, truncating the
expansion~\eqref{eq:KL-W} after $K$ terms yields an error of order
$\cO(K^{1-2s/d})$ with respect to $\norm{\cdot}_H^2$, uniform in $[0,T]$, and
implies that Assumption \ref{ass:approximation} holds with
\(\beta=\LR{2s/d-1}/2\). Alternatively, we could define the diffusion part of
Equation~\eqref{eq:SHE} as $\widehat G(X_t)d\widehat W_t$, where $\widehat W$
is Gaussian white noise (i.e. has covariance operator $\widehat Q = \cI$) and
$\widehat G(v)e_k := \eta_{k}^{1/2} G(v)e_{k}$ for all $v\in H$ and $k\in\bN$.
By \cite[Theorem 2.27]{kruse2014strong} it then follows that
Equation~\eqref{eq:SHE} admits a unique mild solution $X$ such that $X(t)\in L^8(\gO; \dot
H^{\ga})$ for $\ga\in[1, \min(1+s, 2)]$ and all $t\in[0,1]$.
We fix the diffusion coefficient $G$ to be
linear and given by
\begin{equation*}
	G(v)u := \sum_{k=1}^\infty (v,e_k)_He_{k+1}(u, \sqrt{\eta_{k+1}}e_{k+1})_\cU +(g,e_k)_He_k(u, \sqrt{\eta_k}e_k)_\cU
\end{equation*}
for all $v\in H, u \in \cU=Q^{\nicefrac{1}{2}}H$ and some fixed \(g \in H\). We use
a spectral Galerkin approach and expand \(Y^{c}, Y^{f}\) and \(Y^{a}\) in the
same basis, for example
\[
Y_{m}^{c} = \sum_{k=1}^{N} y_{m, k}^{c} \, e_{k}
\]
for \(\lbrace y_{m,k}^{c} \rbrace_{k=1}^{N} \in \bR^{N}\) and \(m=1,\ldots, M\). Recall from
Example~\ref{ex:FEM} that Assumption~\ref{ass:approximation} then holds with
$\widetilde \ga=\nicefrac{\ga}{d}$. The scheme in \eqref{eq:coarse} simplifies
to
\[
\begin{aligned}
	y_{m+1, 1}^{c} &= r(\gD t \, \gl_1) \, \LR{y_{m,1} + (g,e_1)_H  \gD_m w_1} \\
	y_{m+1, 2}^{c} &= r(\gD t \, \gl_2) \LR{\, y_{m,2} +
		\LR{
			\, y_{m,1}
			+(g,e_2)_H  }\gD_m w_2 +
		\frac{1}{2}
		(g, e_{1})_H
		\, \gD_mw_2\gD_m w_{1}} .\\
	y_{m+1, k}^{c} &= r(\gD t \, \gl_k) \LR{\, y_{m,k} + \chi_{k \leq K} \,
		\LR{
			\, y_{m,k-1}
			+(g,e_k)_H  }\gD_m w_k}   \\
	&\quad+ \chi_{k \leq K}
	\frac{r(\gD t \gl_{k})}{2}
	\LR{y_{m,k-2} + (g, e_{k-1})_H}
	\, \gD_mw_k\gD_m w_{k-1} .
\end{aligned}
\]
for \(K\geq 2\), \(k = 3, \ldots, N\), and for \(\chi\) being the characteristic
function. The schemes in \eqref{eq:fine1} and \eqref{eq:fine2} for \(\lbrace{y_{m,
    k}^{f}}\rbrace_{k=1}^{N_{f}}\) and \eqref{eq:anti1} and \eqref{eq:anti2} for
\(\lbrace{y_{m, k}^{a}}\rbrace_{k=1}^{N_{f}}\) simplify similarly. The cost of evaluating
the previous scheme is \(\cO(N)\) for every time step since only \( \mathcal O
(\min(N, K))\) independent Brownian increments \( \gD_m w_{1}, \ldots, \gD_m w_{N}
\) are needed. \add{We choose the rational approximation
  \(r(x)=(1-x/2)/(1+x/2)\)}. We \add{also} set \((g, e_{k})_{H} = k^{-1/2 -
  \varepsilon}\) and \((X_{0}, e_{k})_{H} = k^{-1/2-2/d-\varepsilon}\) for \(k \in \bN\) and some
\(\varepsilon >0\). Note that with this choice {\(G(v)\in\LHS(\cU, H)\) for all $v\in H$}{}
and \(X_{0} \in L^8(\gO; \dot H^2)\). In Figure \ref{fig:l2error-N-plots}, we
fix \(M\) and \(K\) to some sufficiently large values and plot estimates of
the difference \(\max_{m} \|Y_{m}^{N, K} - Y_{m}^{\lceil \sqrt{2} N \rceil, K}\|_{L^{2}(\Omega;
  H)}\) for several values of \(N\). The plot verifies the convergence order
with respect to \(N\) in Assumption \ref{ass:approximation} as
\(\mathcal{O}(N^{-\min(s+1,2)/d})\). %
Next, we choose \(N\) and \(K\) in terms of \(M\) as in \eqref{eq:balance}.
In this case the cost per sample is \(\mathcal O(M^{1+\gg})\) where
\begin{equation*}
  \gg = \max(\nicefrac{d}{2},\nicefrac{\ga}{2\gb}) =
  d\, \max(\nicefrac{1}{2},\nicefrac{\min(1+s, 2)}{(2s - d)}).
\end{equation*}
We plot in Figure \ref{fig:variance-M-plots} estimates of the left-hand sides
of \eqref{eq:var-decay} and \eqref{eq:var-decay-EM} which verifies the claim
of Theorem \ref{thm:MainRes} and the improved convergence order of the
variance for the antithetic estimator. In particular, the variance convergence
order is \(\mathcal{O}(M^{-\min{(\ga, 2)}}) = \mathcal{O}(M^{-\min(s+1, 2)})\) for the antithetic
estimator and \(\mathcal{O}(M^{-1})\) for the truncated Milstein scheme without
antithetic correction. Both figures also clearly showcase the reduced
convergence rates in terms of \(N\) and \(M\) when \(d=1\) and as the
smoothness parameter \(s\) decreases.

\pgfmathdeclarefunction{fitRefLine}{4}{%
	\pgfmathparse{#4 * (#1/#3)^(#2)}}

\pgfplotsset{
  every axis title/.append style={yshift=-1.5ex},
	every axis legend/.style={
		cells={anchor=center},
		fill opacity=0.8, draw opacity=1, text opacity=1,
                draw=none,
                anchor=south west,
		at={(0.03,0.03)},
                legend cell align=left,
		inner xsep=3pt,inner ysep=2pt,
		nodes={inner sep=2pt,text depth=0.15em},
	},
	scale=0.8
}

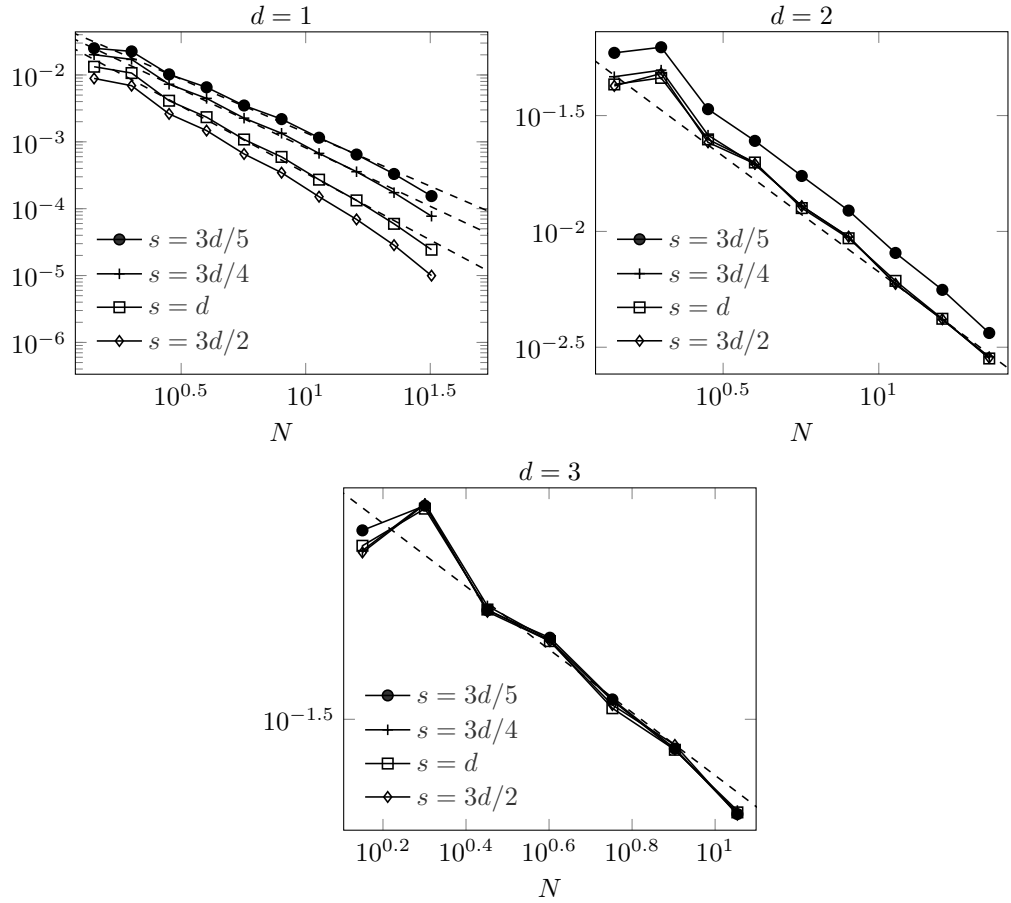
\begin{figure}
	\centering
\begin{tikzpicture}
  \begin{axis}[
    title={\(d=1\)},
    xlabel={\(N\)}, %
    xmin=1.18920711500272, xmax=53.8173705762377,
    ymin=3.34537406705307e-07, ymax=0.0441079926868598,
    xmode=log, ymode=log, log basis x={10}, log basis y={10}
    ]

    \addplot [semithick, black, mark=*, mark size=2]
    table {%
      1.4142135623731 0.0250977330240282
      2 0.0225120373921814
      2.82842712474619 0.0102064805528282
      4 0.00654079716441324
      5.65685424949238 0.00349424389369839
      8 0.0021861156918972
      11.3137084989848 0.00115264089941295
      16 0.000645267020005612
      22.6274169979695 0.000330600386282554
      32 0.000154402781273215
    };
    \addlegendentry{\(s=3d/5\)}
    \addplot [semithick, black, mark=+, mark size=2]
    table {%
      1.4142135623731 0.0200806205508922
      2 0.0171057631066643
      2.82842712474619 0.00726636450489942
      4 0.00446255357732378
      5.65685424949238 0.00226125315477691
      8 0.00134239729270296
      11.3137084989848 0.000671362048934147
      16 0.000357223658752842
      22.6274169979695 0.000173715807963415
      32 7.72804355538637e-05
    };
    \addlegendentry{\(s=3d/4\)}
    \addplot [semithick, black, mark=square, mark size=2]
    table {%
      1.4142135623731 0.01328030282913
      2 0.0106830367902493
      2.82842712474619 0.00411974028280455
      4 0.00233680792362394
      5.65685424949238 0.00108336087003239
      8 0.000594959618398975
      11.3137084989848 0.000271762079206406
      16 0.000133036376394028
      22.6274169979695 5.93922004747208e-05
      32 2.43316959450197e-05
    };
    \addlegendentry{\(s=d\)}
    \addplot [semithick, black, mark=diamond, mark size=2]
    table {%
      1.4142135623731 0.0088726955517452
      2 0.0069146492781663
      2.82842712474619 0.00261845665244441
      4 0.00147312346456528
      5.65685424949238 0.000658451890290235
      8 0.00034539881523812
      11.3137084989848 0.000151064339651146
      16 6.91587228276384e-05
      22.6274169979695 2.84704002539484e-05
      32 9.96772562248637e-06
    };
    \addlegendentry{\(s=3d/2\)}

    \addplot [semithick, black, dashed, domain=1:55]
    {fitRefLine(x, -2, 16, 0.000133036376394028)};

    \addplot [semithick, black, dashed, domain=1:55]
    {fitRefLine(x, -1.75,16, 0.000357223658752842)};

    \addplot [semithick, black, dashed, domain=1:55]
    {fitRefLine(x, -1.6, 16, 0.000645267020005612)};

  \end{axis}
\end{tikzpicture}
\begin{tikzpicture}
  \begin{axis}[
    title={\(d=2\)},
    xlabel={\(N\)}, %
    xmin=1.23114441334492, xmax=25.9920766833995,
    ymin=0.00242224068844143, ymax=0.072820879952109,
    xmode=log, ymode=log, log basis x={10}, log basis y={10}
    ]

    \addplot [semithick, black, mark=*, mark size=2]
    table {%
      1.4142135623731 0.0589970567461984
      2 0.0623838751345702
      2.82842712474619 0.0337059350159465
      4 0.0245938343846685
      5.65685424949238 0.0173523756809095
      8 0.0122994973635307
      11.3137084989848 0.0080726720467484
      16 0.00559257697080515
      22.6274169979695 0.0036435550628823
    };
    \addlegendentry{\(s=3d/5\)}
    \addplot [semithick, black, mark=+, mark size=2]
    table {%
      1.4142135623731 0.046536167647093
      2 0.049687933090109
      2.82842712474619 0.0260276135978104
      4 0.0194299065728082
      5.65685424949238 0.012876347353814
      8 0.0094502087206487
      11.3137084989848 0.00592954487553309
      16 0.00415688572681867
      22.6274169979695 0.00287506001728794
    };
    \addlegendentry{\(s=3d/4\)}
    \addplot [semithick, black, mark=square, mark size=2]
    table {%
      1.4142135623731 0.0430987479130129
      2 0.0460009247590751
      2.82842712474619 0.0249623766000225
      4 0.0198401525484907
      5.65685424949238 0.0126098735758327
      8 0.00934108906193912
      11.3137084989848 0.00611621568389463
      16 0.00419667173837899
      22.6274169979695 0.00282748864201866
    };
    \addlegendentry{\(s=d\)}
    \addplot [semithick, black, mark=diamond, mark size=2]
    table {%
      1.4142135623731 0.0424620614470465
      2 0.0480325165841124
      2.82842712474619 0.0242399145743939
      4 0.0195952553439117
      5.65685424949238 0.0128563829408159
      8 0.00949532778189472
      11.3137084989848 0.00593460222784008
      16 0.0041752553123006
      22.6274169979695 0.00285077590985017
    };
    \addlegendentry{\(s=3d/2\)}

    \addplot [semithick, black, dashed, domain=1:55]
    {fitRefLine(x, -1, 16, 0.0041752553123006)};

  \end{axis}
\end{tikzpicture}

\begin{tikzpicture}
  \begin{axis}[
    title={\(d=3\)},
    xlabel={\(N\)}, %
    xmin=1.27456062731926, xmax=12.553345566348,
    ymin=0.0184242505271278, ymax=0.0976336922349015,
    xmode=log, ymode=log, log basis x={10}, log basis y={10}
    ]

    \addplot [semithick, black, mark=*, mark size=2]
    table {%
      1.4142135623731 0.0793815990750634
      2 0.089516342953107
      2.82842712474619 0.0537667002605838
      4 0.0470474037791559
      5.65685424949238 0.0348349262077071
      8 0.027410022221387
      11.3137084989848 0.0199309215371556
    };
    \addlegendentry{\(s=3d/5\)}
    \addplot [semithick, black, mark=+, mark size=2]
    table {%
      1.4142135623731 0.0719289397722225
      2 0.0905067733944391
      2.82842712474619 0.0549345107035034
      4 0.0464833485821024
      5.65685424949238 0.0346363622568602
      8 0.0272021140518346
      11.3137084989848 0.0202699733904351
    };
    \addlegendentry{\(s=3d/4\)}
    \addplot [semithick, black, mark=square, mark size=2]
    table {%
      1.4142135623731 0.0736525311494264
      2 0.0882224096317956
      2.82842712474619 0.05397408947376
      4 0.0462824533358153
      5.65685424949238 0.0333674004824878
      8 0.0272559825887913
      11.3137084989848 0.0200909357907273
    };
    \addlegendentry{\(s=d\)}
    \addplot [semithick, black, mark=diamond, mark size=2]
    table {%
      1.4142135623731 0.0712833910479774
      2 0.0900899031759922
      2.82842712474619 0.0533882045995173
      4 0.0462752725756581
      5.65685424949238 0.0339084086601254
      8 0.0279098766584302
      11.3137084989848 0.0198750605966784
    };
    \addlegendentry{\(s=3d/2\)}

    \addplot [semithick, black, dashed, domain=1:55]
    {fitRefLine(x, -2/3, 8, 0.0279098766584302)};
  \end{axis}
\end{tikzpicture}
  \caption{Estimates of the \(L^{2}(\Omega ; H)\) difference \(\max_{m} \|Y_{m}^{N,K} - Y_{m}^{\lceil \sqrt{2} N \rceil, K}\|_{L^{2}(\Omega; H)}\) for the numerical
    example in Section~\ref{sec:numerics} and when using the Galerkin method
    for different number of terms, \(N\), in the spatial approximation. The
    estimates were obtained using Monte Carlo sampling with at least 4000
    samples. The dashed reference lines are \(\propto N^{-\min(1+s, 2)/d}\) and
    verify the assumed convergence rates in Assumption
    \ref{ass:approximation}.}
	\label{fig:l2error-N-plots}
\end{figure}

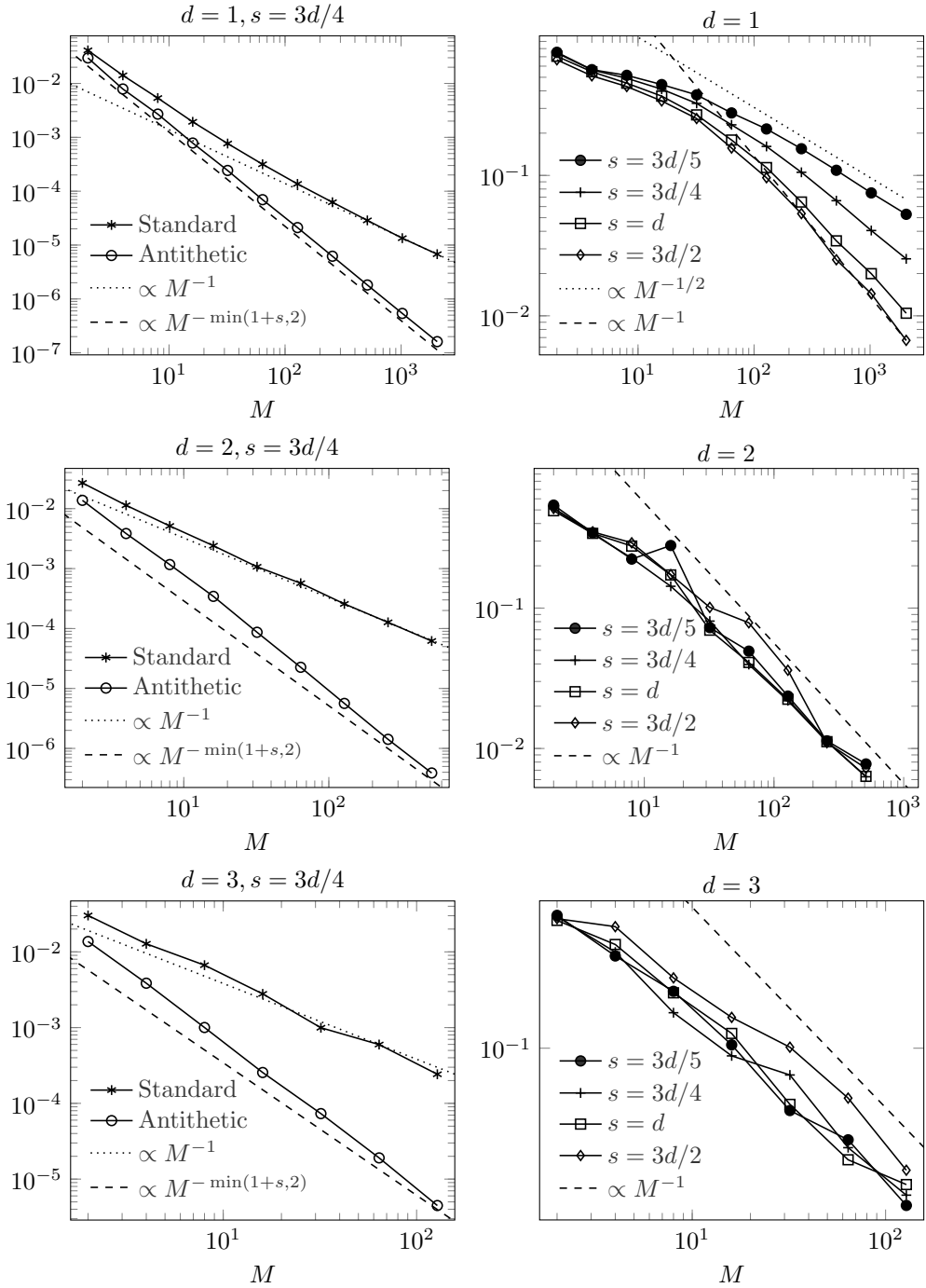
\begin{figure}
	\centering
\begin{tikzpicture}

  \begin{axis}[
    title={\(d=1, s=3d/4\)},
    xlabel={\(M\)},%
    xmin=1.41421356237309, xmax=2896.3093757401,
    ymin=9.15603897103308e-08, ymax=0.0765648656231737,
    xmode=log, ymode=log, log basis x={10}, log basis y={10}
    ]

    \addplot [semithick, black, mark=asterisk, mark size=2]
table {%
2 0.0411036853534655
4 0.01412021961891
8 0.00535725944131898
16 0.00193928687726122
32 0.000766421558446209
64 0.000316181571506849
128 0.000136567662484841
256 6.2354995182362e-05
512 2.86956111912606e-05
1024 1.34278383337412e-05
2048 6.77407221421778e-06
};
\addlegendentry{Standard}
\addplot [semithick, black, mark=o, mark size=2]
table {%
2 0.029762389708203
4 0.00788901430212326
8 0.00269600697153729
16 0.000783136771460127
32 0.000242504150239435
64 6.95947433872929e-05
128 2.1106789734055e-05
256 6.20688923318149e-06
512 1.8148179325381e-06
1024 5.38950471981205e-07
2048 1.6129836743655e-07
};
\addlegendentry{Antithetic}

\addplot [semithick, black, dotted, domain=1:4048]
{fitRefLine(x, -1, 2048, 6.77407221421778e-06)};
\addlegendentry{\(\propto M^{-1}\)}

\addplot [semithick, black, dashed, domain=1:4048]
{0.7 * fitRefLine(x, -1.75, 2048, 1.6129836743655e-07)};
\addlegendentry{\(\propto M^{-\min(1+s, 2)}\)}
\end{axis}

\end{tikzpicture}
\begin{tikzpicture}
  \begin{axis}[
    title={\(d=1\)},
    xlabel={\(M\)}, %
    xmin=1.41421356237309, xmax=2896.3093757401,
    ymin=0.0052704870167727, ymax=0.986947786557392,
    xmode=log, ymode=log, log basis x={10}, log basis y={10}
    ]

    \addplot [semithick, black, mark=*, mark size=2]
    table {%
      2 0.753377193928956
      4 0.5664723169933
      8 0.516124808689248
      16 0.443504120284092
      32 0.374597834309417
      64 0.278602714541589
      128 0.214190981152979
      256 0.154869071759841
      512 0.108765155343803
      1024 0.0750511796747396
      2048 0.0528491883201546
    };
    \addlegendentry{\(s=3d/5\)}

    \addplot [semithick, black, mark=+, mark size=2]
    table {%
      2 0.743399918371974
      4 0.564386575077325
      8 0.491848340196101
      16 0.411105595370899
      32 0.325782083912892
      64 0.229233870211102
      128 0.16085433957754
      256 0.105220331531698
      512 0.0660592237333161
      1024 0.0405079429742523
      2048 0.0254618692609877
    };
    \addlegendentry{\(s=3d/4\)}

    \addplot [semithick, black, mark=square, mark size=2]
    table {%
      2 0.705628188481
      4 0.547398487679
      8 0.45382735515
      16 0.367008160597
      32 0.269840331997
      64 0.178642777226
      128 0.113711391202
      256 0.064621441339
      512 0.034202782271
      1024 0.0199650736152
      2048 0.0104345812505
    };
    \addlegendentry{\(s=d\)}

    \addplot [semithick, black, mark=diamond, mark size=2]
    table {%
      2 0.665803423529132
      4 0.511723511942599
      8 0.429625847109029
      16 0.339110006341265
      32 0.254161316589963
      64 0.156645980305618
      128 0.0962561089944157
      256 0.0534406300032328
      512 0.0250767653726632
      1024 0.0143732261965499
      2048 0.00668564990556579
    };
    \addlegendentry{\(s=3d/2\)}

    \addplot [semithick, black, dotted, domain=2:2048]
    {0.9*fitRefLine(x, -0.5, 512, 0.149809252848076)};
    \addlegendentry{\(\propto M^{-1/2}\)}

    \addplot [semithick, black, dashed, domain=2:2048]
    {fitRefLine(x, -1, 2048, 0.00668564990556579)};
    \addlegendentry{\(\propto M^{-1}\)}

  \end{axis}
\end{tikzpicture}

\begin{tikzpicture}

  \begin{axis}[
    title={\(d=2, s=3d/4\)},
    xlabel={\(M\)},%
    xmin=1.5157165665104, xmax=675.588050315722,
    ymin=2.22831648204329e-07, ymax=0.046919487662473,
    xmode=log, ymode=log, log basis x={10}, log basis y={10}
    ]

    \addplot [semithick, black, mark=asterisk, mark size=2]
    table {%
      2 0.0268770504903491
      4 0.0113821809943314
      8 0.00514313482900699
      16 0.00240689048325706
      32 0.00107251061960379
      64 0.000569603377047274
      128 0.000257467582628775
      256 0.000126777989649216
      512 6.18606265915057e-05
    };
    \addlegendentry{Standard}
    \addplot [semithick, black, mark=o, mark size=2]
    table {%
      2 0.013762577282952
      4 0.00384599702389611
      8 0.00116684031551296
      16 0.000344046038848991
      32 8.65985975054433e-05
      64 2.2559852280592e-05
      128 5.63087470493069e-06
      256 1.41746635403084e-06
      512 3.88999037393844e-07
    };
    \addlegendentry{Antithetic}

    \addplot [semithick, black, dotted, domain=1:4048]
    {fitRefLine(x, -1, 256, 0.000126777989649216)};
    \addlegendentry{\(\propto M^{-1}\)}

    \addplot [semithick, black, dashed, domain=1:4048]
    {0.7 * fitRefLine(x, -1.75, 256, 1.41746635403084e-06)};
    \addlegendentry{\(\propto M^{-\min(1+s, 2)}\)}
  \end{axis}

\end{tikzpicture}
\begin{tikzpicture}
  \begin{axis}[
    title={\(d=2\)},
    xlabel={\(M\)}, %
    xmin=1.41421356237309, xmax=1296.3093757401,
    ymin=0.0052704870167727, ymax=0.986947786557392,
    xmode=log, ymode=log, log basis x={10}, log basis y={10}
    ]

    \addplot [semithick, black, mark=*, mark size=2]
    table {%
      2 0.541030683817432
      4 0.34378935313028
      8 0.223220856865484
      16 0.278980483893546
      32 0.0725925222612046
      64 0.0493225401453015
      128 0.0236147683751602
      256 0.0113313687236685
      512 0.00775814493255636
    };
    \addlegendentry{\(s=3d/5\)}

    \addplot [semithick, black, mark=+, mark size=2]
    table {%
      2 0.512056830339098
      4 0.337896315812542
      8 0.226873366984674
      16 0.142942124389233
      32 0.0807438135553706
      64 0.0396062474164714
      128 0.0218702278843759
      256 0.0111806975166024
      512 0.00628831388926246
    };
    \addlegendentry{\(s=3d/4\)}

    \addplot [semithick, black, mark=square, mark size=2]
    table {%
      2 0.493294120128674
      4 0.340467778114511
      8 0.276323486038796
      16 0.172167827654432
      32 0.0695857291757122
      64 0.0409861640747236
      128 0.0224231172419473
      256 0.0111760204886528
      512 0.00633334741196198
    };
    \addlegendentry{\(s=d\)}

    \addplot [semithick, black, mark=diamond, mark size=2]
    table {%
      2 0.512294989425928
      4 0.34894122223853
      8 0.291580965334704
      16 0.173097552223349
      32 0.1010791955103
      64 0.0785431777172327
      128 0.036075246930365
      256 0.0109870264780897
      512 0.0072253661074907
    };
    \addlegendentry{\(s=3d/2\)}

    \addplot [semithick, black, dashed, domain=1:4048]
    {2 * fitRefLine(x, -1, 256, 0.0109870264780897)};
    \addlegendentry{\(\propto M^{-1}\)}

  \end{axis}
\end{tikzpicture}

\begin{tikzpicture}

  \begin{axis}[
    title={\(d=3, s=3d/4\)},
    xlabel={\(M\)},%
    xmin=1.62450479271247, xmax=157.586484908149,
    ymin=2.88124841447487e-06, ymax=0.046979098707117,
    xmode=log, ymode=log, log basis x={10}, log basis y={10}
    ]

    \addplot [semithick, black, mark=asterisk, mark size=2]
    table {%
      2 0.0302298093981062
      4 0.0127756202225798
      8 0.00667291752782228
      16 0.00278863778801216
      32 0.000995818330791081
      64 0.000597571776549349
      128 0.000242256004658967
    };
    \addlegendentry{Standard}
    \addplot [semithick, black, mark=o, mark size=2]
    table {%
      2 0.0136930899389136
      4 0.00385697491795577
      8 0.00100316456481103
      16 0.000255372898671889
      32 7.32076386302427e-05
      64 1.90552981051308e-05
      128 4.47764826700559e-06
    };
    \addlegendentry{Antithetic}

    \addplot [semithick, black, dotted, domain=1:4048]
    {fitRefLine(x, -1, 64, 0.000597571776549349)};
    \addlegendentry{\(\propto M^{-1}\)}

    \addplot [semithick, black, dashed, domain=1:4048]
    {0.7 * fitRefLine(x, -1.75, 64, 1.90552981051308e-05)};
    \addlegendentry{\(\propto M^{-\min(1+s, 2)}\)}
  \end{axis}

\end{tikzpicture}
\begin{tikzpicture}
  \begin{axis}[
    title={\(d=3\)},
    xlabel={\(M\)}, %
    xmin=1.62450479271247, xmax=157.586484908149,
    ymin=0.0139035686062342, ymax=0.543700021520839,
    xmode=log, ymode=log, log basis x={10}, log basis y={10}
    ]

    \addplot [semithick, black, mark=*, mark size=2]
    table {%
      2 0.460240804241904
      4 0.288641521331773
      8 0.192270240553657
      16 0.103923050760024
      32 0.0488593209636708
      64 0.034906783268419
      128 0.0164248160544513
    };
    \addlegendentry{\(s=3d/5\)}

    \addplot [semithick, black, mark=+, mark size=2]
    table {%
      2 0.452966466264636
      4 0.301901187633842
      8 0.150333727433076
      16 0.0915762168072494
      32 0.0735150542690717
      64 0.031887881678691
      128 0.0184831260356537
    };
    \addlegendentry{\(s=3d/4\)}

    \addplot [semithick, black, mark=square, mark size=2]
    table {%
      2 0.434218124257592
      4 0.329272668417713
      8 0.188974311455352
      16 0.118493215826396
      32 0.05245253538134
      64 0.0277534652872989
      128 0.0208755408236445
    };
    \addlegendentry{\(s=d\)}

    \addplot [semithick, black, mark=diamond, mark size=2]
    table {%
      2 0.442915066923684
      4 0.404238995892589
      8 0.224399006616765
      16 0.142481546366403
      32 0.101150303579019
      64 0.0563720781739241
      128 0.0246073728258529
    };
    \addlegendentry{\(s=3d/2\)}

    \addplot [semithick, black, dashed, domain=1:4048]
    {1.4 * fitRefLine(x, -1, 64, 0.0563720781739241)};
    \addlegendentry{\(\propto M^{-1}\)}

  \end{axis}
\end{tikzpicture}
 
	\caption{ Results for numerical example in Section~\ref{sec:numerics}
          and \(M,N \) and \(K\) as in \eqref{eq:balance}. \textit{(left)}
          Shows the left-hand sides of \eqref{eq:var-decay}, the variance for
          the antithetic estimator, and \eqref{eq:var-decay-EM}, the variance
          for the ``Standard'' truncated Milstein estimator without the
          antithetic correction, for the smoothness parameter \(s=3d/4\).
          \textit{(right)} Shows the relative variance decay between the two
          estimators, i.e., \(\max_{m}\E{\norm{\ol Y_m-Y_m^c}_H^2} \big /
          \max_{m}\E{\norm{Y^{f}_m-Y_m^c}_H^2} =\mathcal O(M^{-\min(s, 1)})
          \), for different smoothness parameters \(s\). The variance
          estimates were obtained using Monte Carlo sampling with at least
          4000 samples.}
	\label{fig:variance-M-plots}
\end{figure}

\section{Conclusions}
\label{sec:conlcusions}

We have developed an antithetic MLMC-Milstein scheme for parabolic SPDEs, which offers a significant improvement in computational efficiency for estimating quantities of interest in SPDE models. This scheme circumvents the need to simulate intractable Lévy area terms, making it particularly advantageous for SPDEs with multiplicative noise and non-commutative diffusion terms.
In our study, we have derived precise variance decay bounds for a fully discrete scheme that incorporates antithetic time stepping, spatial approximations, and noise approximations. Furthermore, we have bounded the computational effort by considering the cost associated with evaluating the noise term. These results provide valuable insights into the efficiency and accuracy of our proposed scheme.

There are several possible extensions to the current work that could be explored. A further step to enhance efficiency would be to develop a higher-order noise approximation that achieves a better rate than $\mathcal{O}(K^{-\gamma})$
in relation to the truncation index $K$ (cf. Theorem~\ref{thm:MainRes}).
Additionally, the methodology employed in this study could be extended to incorporate discontinuous Lévy driving noise, provided that it possesses a sufficient number of moments. While this extension may initially seem straightforward, it is important to emphasize that our results heavily rely on the continuous version of the Burkholder-Davis-Gundy inequality (Eq.~\ref{eq:BDG2}), while only a weaker version (Eq.~\ref{eq:BDG1}) is available for discontinuous martingales. Consequently, a completely different proof technique would be required, even for the relatively simple case of Poisson driving noise.

Another intriguing avenue for exploration would be the consideration of first-order hyperbolic SPDEs, which commonly arise in the modeling of energy forward contracts \cite{BK08, BB14}. In such cases, the weak formulation of SPDEs becomes essential for pathwise discretizations, see \cite{BS2019stochastic}, as the associated semigroups lack the smoothing properties observed in the parabolic case.
Furthermore, recent developments have seen the application of a modified
version of the antithetic Milstein scheme to finite-dimensional stochastic
differential equations with non-Lipschitz drift \cite{pang2023antithetic}.
Extending this result to SPDEs in infinite dimensions would be both intriguing
and worthwhile. \add{Finally, an enhanced Milstein scheme which does not
  require the evaluation of derivatives of \(G\) has been proposed in
  \cite{rossler:deriviative-free-milstein-SPDE-commutative}, and in
  \cite{rossler:deriviative-free-milstein-SPDE-non-commutative} the case of
  non-commutative noise is addressed by approximating the iterated integrals.
  Constructing a truncated version of these enhanced schemes coupled with an
  antithetic estimator would be a logical next step to reduce the cost of the
  scheme and improve computational complexity of the MLMC estimator in
  problems where evaluating derivatives of \(G\) is costly.}

\subsection*{Acknowledgements}

Work on this manuscript was initiated during the authors' visit to the
workshop ``Computational Uncertainty Quantification: Mathematical Foundations,
Methodology \& Data'' at the Erwin-Schrödinger Institute (ESI) in Vienna. We
would like to thank the organizers of this workshop and the ESI for their
invitation and their hospitality during this time.
AS is partly funded by ETH Foundations of Data Science (ETH-FDS), and it is gratefully acknowledged.

\appendix
\section{It\^o Isometry and Burkholder-Davis-Gundy Inequalities}
\label{sec:isometries}

We record the following It\^o isometry  and Burkholder-Davis-Gundy (BDG)-type inequalities for Hilbert space-valued stochastic integrals in the setting of Section~\ref{sec:prelim}.

\begin{lem}\label{lem:ito-BDG}
  Let $Y\in \cM^2(U)$ \add{and denote its martingale covariance by \(\cQ_{Y}\)}. Let
  $G:\gO\times\bT\to \cL(U, H)$ be a $\cP_\bT/\cB(\LHS(\cU, H))$-measurable process
  such that
	\begin{equation*}
		\E{\int_0^T \norm{G(s)\cQ_Y^{\nicefrac{1}{2}}(s)}_{\LHS(U, H)}^2\,d\bracket{Y}{Y}_s}<\infty.
	\end{equation*}
	\begin{enumerate}
		\item There holds the isometric formula
		\be\label{eq:ito-isometry}
		\E{\left\|\int_0^t  G(s)dY(s) \right\|_H^2}
		=\E{\int_0^T \norm{G(s)\cQ_Y^{\nicefrac{1}{2}}(s)}_{\LHS(U, H)}^2\,d\bracket{Y}{Y}_s}, \quad t\in\bT.
		\ee

		\item  If for some $p>2$ there holds
		\begin{equation*}
			\E{\int_0^T \norm{G(s)\cQ_Y^{\nicefrac{1}{2}}(s)}_{\LHS(U, H)}^p\,d\bracket{Y}{Y}_s}<\infty,
		\end{equation*}
		then there is a $C=C(T, p)>0$ such that
		\be\label{eq:BDG1}
		\E{\left\|\int_0^t  G(s)dY(s) \right\|_H^p}
		\le C \E{\int_0^T \norm{G(s)\cQ_Y^{\nicefrac{1}{2}}(s)}_{\LHS(U, H)}^p\,d\bracket{Y}{Y}_s}, \quad t\in\bT.
		\ee
		Moreover, if $Y$ has continuous trajectories, then
		\be\label{eq:BDG2}
		\E{\left\|\int_0^t  G(s)dY(s) \right\|_H^p}
		\le C \E{\LR{\int_0^T \norm{G(s)\cQ_Y^{\nicefrac{1}{2}}(s)}_{\LHS(U, H)}^2\,d\bracket{Y}{Y}_s}^{p/2}}, \quad t\in\bT.
		\ee
	\end{enumerate}
\end{lem}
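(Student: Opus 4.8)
The plan is to build all three estimates from elementary (simple) integrands and then pass to the limit, the common engine being the identification of the operator angle bracket of the integral process with an integral against $d\bracket{Y}{Y}$. First I would record, directly from the $L^2$-construction of the integral (cf. \cite[Section 8.2]{PZ07}), that $Z(t):=\int_0^t G(s)\,dY(s)$ is again a square-integrable $H$-valued martingale whose $\cL_1^+(H)$-valued angle bracket is
\begin{equation*}
\dbracket{Z}{Z}_t=\int_0^t G(s)\cQ_Y(s)G(s)^*\,d\bracket{Y}{Y}_s,\qquad t\in\bT.
\end{equation*}
Taking traces and using $\tr\LR{G\cQ_Y G^*}=\norm{G\cQ_Y^{\nicefrac12}}_{\LHS(U,H)}^2$ together with $\tr\dbracket{Z}{Z}=\bracket{Z}{Z}$, this gives the key identity $\bracket{Z}{Z}_t=\int_0^t\norm{G(s)\cQ_Y^{\nicefrac12}(s)}_{\LHS(U,H)}^2\,d\bracket{Y}{Y}_s$. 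Every right-hand side appearing in the lemma is then an expression in $\bracket{Z}{Z}$, which is what makes the reduction to scalar inequalities possible.

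For the isometry \eqref{eq:ito-isometry} I would first check it for a simple integrand $G(s)=\sum_{j}G_j\indi_{(t_j,t_{j+1}]}(s)$ with $G_j$ bounded and $\cF_{t_j}$-measurable. Expanding $\norm{\sum_j G_j(Y(t_{j+1})-Y(t_j))}_H^2$, the off-diagonal terms vanish after conditioning, since $Y$ is a martingale and the increments are uncorrelated, while each diagonal term satisfies $\bE\LR{\norm{G_j(Y(t_{j+1})-Y(t_j))}_H^2}=\bE\LR{\norm{G_j\cQ_Y^{\nicefrac12}}_{\LHS(U,H)}^2(\bracket{Y}{Y}_{t_{j+1}}-\bracket{Y}{Y}_{t_j})}$ by the representation \eqref{eq:bracket_integral} applied on $(t_j,t_{j+1}]$. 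Summation recovers \eqref{eq:ito-isometry} for simple $G$, and the general case follows by the standard isometric-extension argument: simple processes are dense in the space of predictable integrands under the norm induced by the right-hand side of \eqref{eq:ito-isometry}, and the integrability hypothesis places $G$ in that space.

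For the continuous-martingale bound \eqref{eq:BDG2} I would use that $Z$ inherits continuous trajectories from $Y$, so the nonnegative real submartingale $t\mapsto\norm{Z(t)}_H$ is continuous and the scalar (Hilbert-space) Burkholder--Davis--Gundy inequality applies, giving $\bE\LR{\sup_{s\le t}\norm{Z(s)}_H^p}\le C\,\bE\LR{\bracket{Z}{Z}_T^{p/2}}$; substituting the key identity for $\bracket{Z}{Z}_T$ yields \eqref{eq:BDG2}. The inequality \eqref{eq:BDG1}, which does not assume continuity, is the general càdlàg $L^p$-bound for Hilbert-space martingale integrals from \cite[Section 8.3]{PZ07}: one proves it for simple integrands through the $L^p$-moment estimates of the individual martingale increments $G_j(Y(t_{j+1})-Y(t_j))$ and then extends, the jump contributions forcing the integrand to enter at the $p$-th power rather than the bracket at the $p/2$-th power. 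When $Y$ is continuous the two are reconciled through Hölder's inequality $\big(\int_0^T f^2\,d\bracket{Y}{Y}\big)^{p/2}\le\bracket{Y}{Y}_T^{p/2-1}\int_0^T f^p\,d\bracket{Y}{Y}$, which shows \eqref{eq:BDG2} is the sharper statement.

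The hard part is the passage from the scalar / finite-dimensional inequalities to the present operator-valued, infinite-dimensional setting. One must ensure that $\dbracket{Z}{Z}$ is genuinely $\cL_1^+(H)$-valued so that its trace is finite and equals $\bracket{Z}{Z}$ (this is exactly what the integrability hypotheses guarantee), and one must carefully track the distinction between continuous and general martingales. It is precisely the jump part of the quadratic variation that obstructs the $p/2$-power bound \eqref{eq:BDG2} in the discontinuous case and leaves only the integrand-wise bound \eqref{eq:BDG1}; the continuity assumption in the last part of the lemma is what removes this obstruction and is the feature on which the rest of the paper relies.
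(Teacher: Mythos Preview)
Your sketch is mathematically sound and follows the standard route to these inequalities. However, you should be aware that the paper does not actually prove this lemma: immediately after stating it, the authors simply cite \cite[Theorem 8.7]{PZ07} for the isometry \eqref{eq:ito-isometry} and \cite[Eq.~(1.5)]{hausenblas2007stochastic} for the continuous BDG bound \eqref{eq:BDG2}, with no further argument. So there is no ``paper's own proof'' to compare against---the lemma is recorded as a known result.

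That said, your outline is precisely how those cited proofs proceed: identify $\dbracket{Z}{Z}_t=\int_0^t G(s)\cQ_Y(s)G(s)^*\,d\bracket{Y}{Y}_s$, take traces to recover $\bracket{Z}{Z}$, verify the isometry on simple integrands and extend by density, and invoke the Hilbert-space BDG inequality for the $L^p$ bounds. One small wording issue: you call $t\mapsto\norm{Z(t)}_H$ a submartingale and then say ``BDG applies''---but BDG is applied to the martingale $Z$ itself (with its quadratic variation $\bracket{Z}{Z}$), not to the submartingale $\norm{Z}_H$. The conclusion you write is the correct one; just be careful with the justification. Your closing remark about H\"older reconciling \eqref{eq:BDG1} and \eqref{eq:BDG2} in the continuous case is a nice observation, though not needed for the lemma as stated.
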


For a proof of~\eqref{eq:ito-isometry} see e.g. \cite[Theorem 8.7]{PZ07}, the BDG inequality~\eqref{eq:BDG2} for continuous martingales may for instance be found \cite[Eq. (1.5)]{hausenblas2007stochastic} and the references therein.
The previous result simplifies for Wiener processes with constant martingale covariance $\cQ_Y=Q\, \tr(Q)^{-1}$ as in Section~\ref{sec:Wiener}.
In this case, the It\^o isometry and BDG inequality from Lemma~\ref{lem:ito-BDG} admit the following form.

\begin{cor}\cite[Corollary 8.17 and Lemma 8.27]{PZ07}\label{cor:ito-BDG}
	Let $W$ be a $Q$-Wiener process and let $G:\gO\times\bT\to \LHS(\cU, H)$ be a
	$\cP_\bT/\cB(\LHS(\cU, H))$-measurable and square-integable mapping.
	\begin{enumerate}
		\item There holds the isometric formula
		\be\label{eq:ito-isometry-W}
		\bE\left(\left\|\int_0^t  G(s)dW(s) \right\|_H^2\right)
		=\int_0^t\bE\left(\|G(s)\|^2_{\LHS(\cU,H)}\right)ds
		=\int_0^t\sum_{k\in\bN}\eta_k\bE\left(\|G(s)e_k\|^2_H\right)ds.
		\ee

		\item If, in addition, for some $p>2$ there holds
		\bee
		\bE\left(\int_0^t\|G(s)\|^p_{\LHS(\cU,H)}ds\right)<\infty,
		\eee
		then there is a $C=C(p)>0$ such that for $t\in\bT$ there holds
		\be\label{eq:BDG-W}
		\bE\left(\left\|\int_0^t  G(s)dW(s) \right\|_H^p\right)
		\le C \bE\LR{\left(\int_0^t\|G(s)\|^2_{\LHS(\cU,H)}ds\right)^{p/2}}.
		\ee
	\end{enumerate}
\end{cor}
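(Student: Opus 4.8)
The plan is to obtain Corollary~\ref{cor:ito-BDG} as a direct specialization of the general It\^o isometry~\eqref{eq:ito-isometry} and BDG inequalities~\eqref{eq:BDG1}--\eqref{eq:BDG2} in Lemma~\ref{lem:ito-BDG} to the case $Y=W$, exploiting the explicit form of the bracket process and martingale covariance of a $Q$-Wiener process. First I would record from Section~\ref{sec:Wiener} that $\bracket{W}{W}_t = t\,\tr(Q)$, so that $d\bracket{W}{W}_s = \tr(Q)\,ds$, and that the martingale covariance is the \emph{constant} operator $\cQ_W = \tr(Q)^{-1} Q$. Since $Q\in\cL_1^+(U)$ has $\tr(Q)<\infty$, all the normalizing factors appearing below are finite and strictly positive.

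The key computation is the identity $\norm{G(s)\cQ_W^{\nicefrac{1}{2}}}_{\LHS(U,H)}^2 = \tr(Q)^{-1}\norm{G(s)}_{\LHS(\cU,H)}^2$, which is what links the driving space $U$ to the RKHS $\cU$. To establish it, I would use that $\cQ_W^{\nicefrac{1}{2}} = \tr(Q)^{-\nicefrac{1}{2}} Q^{\nicefrac{1}{2}}$ together with the fact that the eigenbasis $(e_k)$ of $Q$ satisfies $Q^{\nicefrac{1}{2}} e_k = \sqrt{\eta_k}\,e_k$. Expanding the Hilbert-Schmidt norm over $(e_k)$ then gives $\norm{G(s)Q^{\nicefrac{1}{2}}}_{\LHS(U,H)}^2 = \sum_{k\in\bN}\norm{G(s)\sqrt{\eta_k}e_k}_H^2 = \sum_{k\in\bN}\eta_k\norm{G(s)e_k}_H^2$, which is precisely $\norm{G(s)}_{\LHS(\cU,H)}^2$ since $(\sqrt{\eta_k}e_k,k\in\bN)$ is an orthonormal system in $\cU$; incorporating the factor $\tr(Q)^{-1}$ from $\cQ_W^{\nicefrac{1}{2}}$ yields the claim.

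With this identity in hand, part (1) follows by substituting into~\eqref{eq:ito-isometry}: the integrand contributes the factor $\tr(Q)^{-1}$ while $d\bracket{W}{W}_s = \tr(Q)\,ds$ contributes $\tr(Q)$, so the two cancel exactly and one is left with $\int_0^t\E{\norm{G(s)}_{\LHS(\cU,H)}^2}\,ds$; the final expression in~\eqref{eq:ito-isometry-W} is then just the coordinate expansion of the Hilbert-Schmidt norm derived above. For part (2), since $W$ has continuous trajectories I would invoke the continuous-martingale inequality~\eqref{eq:BDG2} rather than~\eqref{eq:BDG1}, first checking that the corollary's hypothesis $\E{\int_0^t\norm{G(s)}_{\LHS(\cU,H)}^p\,ds}<\infty$ transfers to the $p$-integrability condition of Lemma~\ref{lem:ito-BDG}(2), up to the finite factor $\tr(Q)^{1-\nicefrac{p}{2}}$. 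The same cancellation of $\tr(Q)$, now inside the $\LR{\cdot}^{\nicefrac{p}{2}}$, turns the right-hand side of~\eqref{eq:BDG2} into $C\,\E{\LR{\int_0^t\norm{G(s)}_{\LHS(\cU,H)}^2\,ds}^{\nicefrac{p}{2}}}$, as required.

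There is no serious obstacle here, as the statement is genuinely a corollary of Lemma~\ref{lem:ito-BDG}; the only point demanding a little care is the bookkeeping of the $\tr(Q)$ normalization arising from the constant martingale covariance $\cQ_W = \tr(Q)^{-1}Q$, together with the norm identification $\norm{G Q^{\nicefrac{1}{2}}}_{\LHS(U,H)} = \norm{G}_{\LHS(\cU,H)}$. Verifying that these two $\tr(Q)$ factors cancel precisely, and that the measurability and square-integrability hypotheses match those of Lemma~\ref{lem:ito-BDG}, is all the proof requires.
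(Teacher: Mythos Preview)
Your proposal is correct and matches the paper's approach: the paper does not give an explicit proof but simply presents the corollary as the specialization of Lemma~\ref{lem:ito-BDG} to the Wiener case with constant martingale covariance $\cQ_W=\tr(Q)^{-1}Q$, citing \cite[Corollary 8.17 and Lemma 8.27]{PZ07}. You have filled in precisely the details the paper leaves implicit, namely the $\tr(Q)$ cancellation and the norm identification $\norm{GQ^{\nicefrac{1}{2}}}_{\LHS(U,H)}=\norm{G}_{\LHS(\cU,H)}$.
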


\section{Proofs of Section~\ref{sec:approximation}}
\label{sec:app1}

\begin{proof}[Proof of Proposition~\ref{prop:correction_est}]
	For any fixed $X\in H$, define
	\bee
	\widetilde G_X:U\times U \del{\mapsto}\add{\to} H,\quad  (\phi, \varphi)\mapsto \frac{1}{2}G'(X)(P_NG(X)\phi)\varphi.
	\eee
	As $G'(X)\in \cL(H, \cL(\cU, H))$ and $G(X)\in \cL(\cU, H)$, it readily follows that $\widetilde G_X$ is bilinear.
	Thus, there exists a unique $\cG_X\in\cL(\cU\otimes\cU, H)\simeq\cL(\LHS(\cU), H)$  such that
	\be\label{eq:tensor_G}
	\widetilde G_X(\phi, \varphi)=\cG_X(\phi\otimes \varphi),\quad (\phi, \varphi)\in U\times U.
	\ee
	We thus define $\cG:H\to \cL(\LHS(\cU), H),\; X\mapsto \cG_X$, and~\eqref{eq:tensor_correction} follows by the linearity of $\cG_X$ together with
	\begin{align*}
		\gD_m\cW_{m,K}
		&=(W_K(t_{m+1})-W_K(t_m))\otimes (W_K(t_{m+1})-W_K(t_m)) - \gD t\sum_{k=1}^K\eta_k\, e_k\otimes e_k \\
		&=\sum_{k,l=1}^K
		(\gD_m w_k\gD_m w_l-\gd_{k,l}\eta_k\gD t)\,e_k\otimes e_l.
	\end{align*}

	To show that $\cG(X)\in \LHS(\LHS(\cU), H)$ for all $X\in H$, we use~\eqref{eq:tensor_G} and that $\LR{\sqrt{\eta_k}e_k\otimes \sqrt{\eta_l}e_l, (k,l)\in\bN^2}$ is an orthonormal basis of $\LHS(\cU)$ to obtain
	\begin{align*}
		\|\cG(X)\|_{\LHS(\LHS(\cU), H)}^2
		&=
		\sum_{k,l\in\bN} \|\cG(X)(\sqrt{\eta_k}e_k\otimes \sqrt{\eta_l}e_l)\|_H^2 \\
		&=
		\frac{1}{4}\sum_{k,l\in\bN} \|G'(X)(P_NG(X)\sqrt{\eta_k}e_k)\sqrt{\eta_l}e_l\|_H^2 \\
		&=
		\frac{1}{4}\sum_{k\in\bN} \|G'(X)(P_NG(X)\sqrt{\eta_k}e_k)\|_{\LHS(\cU, H)}^2 \\
		&\le
		\frac{1}{4}\|G'(X)\|_{\cL(H, \LHS(\cU, H))}^2\sum_{k\in\bN} \|G(X)\sqrt{\eta_k}e_k\|_H^2\\
		&=
		\frac{1}{4}\|G'(X)\|_{\cL(H, \LHS(\cU, H))}^2\|G(X)\|_{\LHS(\cU, H)}^2.
	\end{align*}
	Using that $G$ is twice differentiable with bounded derivatives from
	Assumption~\ref{ass:SPDE}~\ref{item:FG-diff}, we obtain
	\begin{align*}
		\|\cG(X)\|_{\LHS(\LHS(\cU), H)}^2 \le C(1+\|X\|_H)^2.
	\end{align*}
	The bound in~\eqref{eq:tensor_norm2} follows analogously, by using
	Assumption~\ref{ass:SPDE}~\ref{item:smooth-coefficients} instead.

	The Fr\'echet derivative $\cG'\in \cL(H\times H, \LHS(\LHS(\cU), H))$ of $\cG$ is for all $(X,Y)\in H\times H$ given by
	\begin{equation*}
		\cG'(X)(Y)(\phi\otimes\varphi) =
		\frac{1}{2}\LR{G''(X)[P_NG(X)\phi, Y]\varphi+G'(X)(P_NG'(X)(Y)\phi)\varphi},\quad \phi,\varphi\in\cU.
	\end{equation*}
	The last estimate then follows since $G'$ and $G''$ are globally bounded by Assumption~\ref{ass:SPDE}~\eqref{item:FG-diff}, since
	\begin{align*}
		&\|\cG'(X)(Y)(\phi\otimes\varphi)\|_{\LHS(\LHS(\cU), H)} \\
		&\le
		\frac{1}{2}\Big(\|G''(X)[P_NG(X)\phi, Y]\|_{\LHS(\cU, H)}
		+\|G'(X)(P_NG'(X)(Y)\phi)\|_{\LHS(\cU, H)}\Big)\|\varphi\|_\cU \\
		&\le
		\frac{1}{2}\Big(\|G''(X)\|_{\cL(H\times H, \LHS(\cU, H))}
		\|G(X)\phi\|_H\|Y\|_H +\|G'(X)\|_{\cL(H, \LHS(\cU, H))}
		\|G'(X)(Y)\phi\|_H\Big)\|\varphi\|_\cU \\
		&\le
		\frac{1}{2}\Big(\|G''(X)\|_{\cL(H\times H, \LHS(\cU, H))}
		\|G(X)\|_{\LHS(\cU, H))}\|Y\|_H \\
		&\quad\qquad+\|G'(X)\|_{\cL(H, \LHS(\cU, H))}
		\|G'(X)\|_{\cL(H, \LHS(\cU, H))}\|Y\|_H \Big)\|\phi\|_\cU\|\varphi\|_\cU \\
		&\le C(1+\|X\|_H)\|Y\|_H \|\phi\otimes\varphi\|_{\cL_1(\cU)}.
	\end{align*}
\end{proof}

We next record some stability and error estimates on the rational approximation $r(\gD t A_N)\approx S_N(\gD t)$ to prove Theorem~\ref{thm:stability}.

\begin{lem}\label{lem:rational-approx}
  Let Assumption~\ref{ass:approximation}~\ref{item:rational}
  and~\ref{item:subspace_approx} hold.
  \begin{enumerate}
  \item For any $N\in\bN$, $\gD t>0$, \add{$j\in\bN$} and $\ga\ge 0$ there holds
		\begin{equation*}
			\|r(\gD t A_N)^{\add{j}}P_N\|_{\cL(H)}\le 1 \quad\text{and}\quad
			\|r(\gD t A)^{\add{j}}\|_{\cL(\dot H^\ga)}\le 1.
		\end{equation*}

              \item For any $\ga\in [0,\del{2(q+1)}\add{4}]$ there exists $C>0$ such that for
                any $\gD t>0$ there holds
		\begin{equation*}
			\|r(\gD tA)-I\|_{\cL(\dot H^\ga, H)} \le C \gD t^{\nicefrac{\ga}{2}}.
		\end{equation*}

		\item For any $\ga\in [0,\del{2(q+1)}\add{4}]$ there exists $C$ such that for any $N, \gD t>0$, $j\in\bN$ and $v\in \dot H^\ga$ it holds
		\begin{equation*}
			\|(r(\gD t A_N)^j-S_N(j\gD t))P_N v\|_H\le
			C \gD t^{\nicefrac{\min(\ga, 2)}{2}}\|v\|_{\dot H^{\min(\ga, 2)}}.
		\end{equation*}
		For $\mfd t:=\frac{\gD t}{2}$ and $j\in\bN$ there holds
		\begin{equation*}
			\|(r(\mfd tA_N)^{2j}-r(\gD t A_N))^jP_N v\|_H\le C \gD t^{\nicefrac{\min(\ga, 2)}{2}}\|v\|_{\dot H^{\min(\ga, 2)}}.
		\end{equation*}
		Furthermore, there exists $\widetilde \ga>0$ such that
		\begin{equation*}
			\|(r(\gD t A_N)P_N-I)v\|_H\le C (\gD t^{\nicefrac{\min(\ga, 2)}{2}} + N^{-\widetilde \ga})\|v\|_{\dot H^\ga}.
		\end{equation*}
	\end{enumerate}
\end{lem}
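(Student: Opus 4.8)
The plan is to reduce every operator-norm estimate to a uniform scalar inequality via the spectral calculus. Since $(-A)$ and $(-A_N)$ are self-adjoint and positive definite with orthonormal eigenbases $(f_n)$ and $(\widetilde f_n)$, any function $g$ of these operators acts diagonally, e.g. $g(\gD t A_N)P_N v=\sum_{n=1}^N g(\gD t\widetilde\gl_n)(v,\widetilde f_n)_H\widetilde f_n$, so that $\norm{g(\gD t A_N)P_N}_{\cL(H)}=\max_{1\le n\le N}\abs{g(\gD t\widetilde\gl_n)}$. Moreover $(-A)^{\ga/2}$ commutes with every $g(\gD t A)$ and $\norm{v}_{\dot H^\ga}=\norm{(-A)^{\ga/2}v}_H$, so each $\dot H^\ga$-bound follows from the corresponding $H$-bound after commuting the fractional power through. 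All estimates then hinge on properties of the scalar functions $z\mapsto r(z)$, $z\mapsto r(z)-1$ and $z\mapsto r(z)^j-e^{-jz}$, evaluated at $z=\gD t\gl_n$ or $z=\gD t\widetilde\gl_n>0$.

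For part~(1), stability $\abs{r(z)}<1$ for $z>0$ (Assumption~\ref{ass:approximation}~\ref{item:rational}) together with $\widetilde\gl_n,\gl_n>0$ gives $\abs{r(\gD t\widetilde\gl_n)}^{j}\le1$, hence $\norm{r(\gD t A_N)^{j}P_N}_{\cL(H)}\le1$; the $\dot H^\ga$-bound for $r(\gD t A)^{j}$ follows by commuting $(-A)^{\ga/2}$ through and reusing the $H$-bound. For part~(2), the same reduction turns the claim into the scalar bound $\sup_{z>0}z^{-\ga/2}\abs{r(z)-1}<\infty$, which I would establish from the consistency $r(z)=1-z+\cO(z^2)$ near $z=0$ together with $\abs{r(z)}<1$ and $r(z)\to0$ as $z\to\infty$; the spectral sum $\sum_n\abs{r(\gD t\gl_n)-1}^2\abs{(v,f_n)_H}^2\le C\gD t^{\ga}\norm{v}_{\dot H^\ga}^2$ then yields the operator estimate.

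The heart of part~(3) is the propagated-error bound $\abs{r(z)^{j}-e^{-jz}}\le C\min(z,1)$, uniform in $j\in\bN$ and $z>0$. I would prove it from the telescoping identity $r(z)^{j}-e^{-jz}=\sum_{i=0}^{j-1}r(z)^{\,i}\bigl(r(z)-e^{-z}\bigr)e^{-(j-1-i)z}$, the consistency bound $\abs{r(z)-e^{-z}}\le Cz^2$ (valid globally, since the left side is bounded while $z^2$ grows), and the sharpened stability $\abs{r(z)}\le e^{-cz}$ for small $z$ (which follows from $r(z)=1-z+\cO(z^2)$) together with $\abs{r(z)}\le c_0<1$ for $z$ bounded away from $0$. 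For $z$ bounded away from $0$ the bound is immediate from $\abs{r(z)}\le c_0<1$ and $e^{-jz}\le1$; the work, and the \emph{main obstacle}, is the regime of small $z$ with large $j$, where the naive telescoping only gives the linearly growing factor $j\,\abs{r(z)-e^{-z}}\le Cjz^2$. The decisive point is to absorb this growth: inserting $\abs{r(z)}^{i}\le e^{-ciz}$ one gets the bound $Cjz^2e^{-c(j-1)z}$, and writing $u=jz$ shows $jz^2e^{-c(j-1)z}=z\,(jz)e^{-c(j-1)z}\le Cz\,u\,e^{-c'u}\le Cz$, so the error stays $\le C\min(z,1)$ for all $j$. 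Feeding $\abs{r(z)^{j}-e^{-jz}}\le C\min(z,1)\le Cz^{\min(\ga,2)/2}$ into the spectral sum over $(\widetilde\gl_n)$ gives the first estimate of part~(3); when $\ga>2$ one also uses $\norm{v}_{\dot H^{\min(\ga,2)}}\le C\norm{v}_{\dot H^\ga}$, which holds because $\gl_1>0$ by bounded invertibility of $(-A)$.

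The remaining two estimates of part~(3) then follow quickly. For the fine-versus-coarse bound, both $r(\mfd t A_N)^{2j}$ and $r(\gD t A_N)^{j}$ approximate $S_N(j\gD t)=S_N(2j\,\mfd t)$, so I would write their difference as $\bigl(r(\mfd t A_N)^{2j}-S_N(j\gD t)\bigr)-\bigl(r(\gD t A_N)^{j}-S_N(j\gD t)\bigr)$ and apply the first estimate of part~(3) to each term, using $\mfd t=\gD t/2\le\gD t$. For the last estimate I would split $r(\gD t A_N)P_N-I=(r(\gD t A_N)-I)P_N+(P_N-I)$: the projection error $\norm{(P_N-I)v}_H\le CN^{-\widetilde\ga}\norm{v}_{\dot H^\ga}$ is exactly the subspace approximation property in Assumption~\ref{ass:approximation}~\ref{item:subspace_approx}, while $(r(\gD t A_N)-I)P_N$ is the $j=1$ case of part~(3) combined with the discrete smoothing bound $\norm{A_N^{\min(\ga,2)/2}P_N v}_H\le C\norm{v}_{\dot H^{\min(\ga,2)}}$ from the same assumption, giving the $\gD t^{\min(\ga,2)/2}$ contribution; adding the two bounds completes the proof.
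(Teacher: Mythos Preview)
Your proof is correct and rests on the same spectral-calculus reduction as the paper's, but you take a more self-contained route for the core scalar estimates. The paper simply cites \cite[Theorem~7.1]{thomee2007galerkin} for the bound on $r(z)^j-e^{-jz}$ and \cite[Theorem~6.13(d)]{pazy1983semigroups} for $e^{-z}-1$, whereas you supply the telescoping argument with the exponential-stability refinement $|r(z)|\le e^{-cz}$ and the absorption trick $jz^2e^{-c(j-1)z}\le Cz$ explicitly. Your version is longer but more transparent; the paper's is shorter but relies on black-box references. Two small imprecisions to fix: (i) for the first estimate of part~(3), after the spectral sum you land on $\|A_N^{\min(\ga,2)/2}P_N v\|_H$, and to convert this to $\|v\|_{\dot H^{\min(\ga,2)}}$ you must invoke the second half of Assumption~\ref{ass:approximation}\ref{item:subspace_approx}---you do this for the last estimate but should mention it here too, since the eigenbases of $A$ and $A_N$ differ; (ii) for the last estimate, ``the $j=1$ case of part~(3)'' gives $r(\gD tA_N)-S_N(\gD t)$, not $r(\gD tA_N)-I$, so you still need the easy semigroup bound $|e^{-z}-1|\le z^{\min(\ga,2)/2}$ (or, as the paper does, apply the part-(2) scalar bound directly to $A_N$).
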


\begin{proof}
	~
	\begin{enumerate}
		\item These stability estimates are well-known and may be found for instance in the proof of~\cite[Theorem 7.1]{thomee2007galerkin}. We give a short proof here for the reader's convenience.

		Let $(\widetilde f_1, \dots, \widetilde f_n)$ denote the eigenbasis of $(-A_N)$, and recall that the corresponding eigenvalues satisfy $\widetilde \gl_n>0$ for all $n=1,\dots,N$. Since $r(z)<1$ for all $z\ge 0$ by the first part of Assumption~\ref{ass:approximation}~\ref{item:rational}, we have for all $v\in H$ that
		\begin{equation*}
			\|r(\gD t A_N)^{\add{j}}P_N v\|_H^2
			= \left\| \sum_{n=1}^{N} r(\gD t \widetilde\gl_n)^{\add{j}} (P_Nv, \widetilde f_n)_H \widetilde f_n\right\|_H^2
			\le \|P_N v\|_H^2 \le \|v\|_H^2.
		\end{equation*}
		For the second part, let $\gl_n>0$ and $f_n\in H$ denote for $n\in\bN$ the eigenvalues and eigenfunctions of $(-A)$.
		Similar as for the first part, we have
		\begin{equation*}
			\|r(\gD t A) ^{\add{j}}v\|_{\dot H^\ga}^2
			=  \sum_{n\in\bN} \gl_n^{\ga}|r(\gD t \gl_n)^{\add{j}}|^2 |(v, f_n)_H|^2
			\le \sum_{n\in\bN} \gl_n^{\ga}|(v, f_n)_H|^2
			\le \|v\|_{\dot H^\ga}^2.
		\end{equation*}

		\item The triangle inequality yields for any $v\in \dot H^\ga$ that
		\begin{equation*}
			\|r(\gD tA)-I\|_{\cL(\dot H^\ga, H)} \le \|r(\gD tA)-S(\gD t)\|_{\cL(\dot H^\ga, H)}
			+ \|S(\gD t)-I\|_{\cL(\dot H^\ga, H)} \le C \gD t^{\nicefrac{\ga}{2}}.
		\end{equation*}
		The first term on the right hand side is bounded by \cite[Theorem 7.1]{thomee2007galerkin}, the second term by \cite[Theorem 6.13, part d)]{pazy1983semigroups}.

              \item The first part is again given in \cite[Theorem 7.1]{thomee2007galerkin} together with Assumption~\ref{ass:approximation}~\ref{item:subspace_approx}\add{.}\rdel{ that yields $$\norm{A_N^{\nicefrac{\min(\ga, 2)}{2}}P_N v}_H \le C \norm{v}_{\dot{H}^{\min(\ga, 2)}}.$$}
		The second part then follows immediately by the triangle
                inequality, since $S_N(2j\mfd t)= S_N(j\gD t)$. The final
                estimate follows by
		\begin{align*}
			\|(r(\gD t A_N)P_N-I)v\|_H
			&\le
			\norm{(r(\gD t A_N)-I)P_Nv}_H + \norm{(P_N-I)v}_H \\
			&\le
			C \gD t^{\nicefrac{\min(\ga, 2)}{2}}\norm{A_N^{\nicefrac{\min(\ga, 2)}{2}}P_N v}_H + \norm{(P_N-I)v}_H\\
			&\le
			C \LR{\gD t^{\nicefrac{\min(\ga, 2)}{2}} + N^{-\widetilde \ga}}\norm{v}_{\dot{H}^\ga},
		\end{align*}
		where the second estimate is derived in the same fashion as part 2.),
		and we have used
		Assumption~\ref{ass:approximation}~\ref{item:subspace_approx} in the last step.
	\end{enumerate}
\end{proof}

\begin{proof}[Proof of Theorem~\ref{thm:stability}]
	1. For $m=0,\dots, M-1$, we re-iterate the representation in~\eqref{eq:truncated_milstein} to obtain
	\be\label{eq:milstein_iterated}
	\begin{split}
		Y_{m+1}^{N,K}
		&= r(\gD tA_N)^m P_NX_0 +
		\sum_{j=0}^m r(\gD tA_{N_f})^jP_N F(Y_{m-j}^{N,K}) \gD t
		+\sum_{j=0}^m r(\gD tA_{N_f})^jP_N G(Y_{m-j}^{N,K})\gD_{m-j}W_K \\
		&\quad+ \sum_{j=0}^m r(\gD tA_N)P_N \cG(Y_{m-j}^{N,K})\gD_{m-j} \cW_{m-j, K} \\
		&=: r(\gD tA_N)^m P_NX_0 + \tI + \tII + \tIII.
	\end{split}
	\ee
	The first term is bounded in $L^p(\gO; H)$ by Jensen's inequality and the first part of Lemma~\ref{lem:rational-approx}
	\begin{align*}
		\E{\norm{\tI}_H^p}
		\le \gD t^{p} m^{p-1} \sum_{j=0}^m \E{\norm{r(\gD tA_{N_f})^jP_N F(Y_{m-j}^{N,K}) }_H^p}
		\le C \gD t \sum_{j=0}^m \E{1+\norm{Y_{m-j}^{N,K}}_H^p},
	\end{align*}
	where the last bound holds since $F$ is of linear growth.

	For the second term, we use the BDG inequality in~\eqref{eq:BDG-W} together with Jensen's inequality and the linear growth of $G$ to obtain
	\begin{align*}
		\E{\norm{\tII}_H^p}
		&\le C \E{ \LR{\sum_{j=0}^m \gD t \norm{r(\gD tA_{N_f})^jP_N G(Y_{m-j}^{N,K}) }_{\LHS(\cU, H)}^2}^{p/2} } \\
		&\le C \gD t^{p/2}m^{p/2-1} \sum_{j=0}^m \E{\norm{r(\gD tA_{N_f})^jP_N G(Y_{m-j}^{N,K}) }_{\LHS(\cU, H)}^p} \\
		&\le C \gD t \sum_{j=0}^m \E{1+\norm{Y_{m-j}^{N,K}}_H^p}.
	\end{align*}

	To bound the last term, recall the bound in~\eqref{eq:W_bracket}, which shows with the BDG inequality from~\eqref{eq:BDG2} that
	\begin{align*}
		&\E{\norm{\tIII}_H^p} \\
		&\le C \E{ \LR{
				\sum_{j=0}^m \int_{t_j}^{t_{j+1}}
                  \norm{r(\gD tA_{N_f})^jP_N \cG(Y_{m-j}^{N,K}) Q_{\cW_{m-j, K}}^{\nicefrac{1}{2}}(s)}_{\LHS(
                  \del{U\otimes U}\add{\LHS(U)}, H)}^2 \,d\bracket{\cW_{m-j, K}}{\cW_{m-j, K}}_s}^{p/2} }   \\
		&\le C \E{ \LR{ \sum_{j=0}^m \norm{\cG(Y_{m-j}^{N,K})}_{\LHS(\del{\cU\otimes\cU}\add{\LHS(\cU)}, H)}^2 \gD t^2 }^{p/2} }   \\
		&\le C \gD t^{p}m^{p/2-1} \sum_{j=0}^m \E{\norm{\cG(Y_{m-j}^{N,K})}_{\LHS(\del{\cL_1(\cU)}\add{\LHS(\cU)}, H)}^p}  \\
		&\le C \gD t^{p/2+1} \sum_{j=0}^m \E{1+\norm{Y_{m-j}^{N,K}}_H^p} ,
	\end{align*}
	where the last bound holds due to the linear growth
        bound~\eqref{eq:tensor_norm}.

	\del{Substituting the estimate for $\tI{-}\tIII$ into~\eqref{eq:milstein_iterated} and taking expectations yields}
        \add{Taking
          expectations
          of \eqref{eq:milstein_iterated} and
          substituting the estimates for $\tI{-}\tIII$ into the result yields}
	\bee
	\E{\norm{Y_{m+1}^{N,K}}_H^p}\le C(1 + \E{\norm{X_0}_H^p} + \gD t\sum_{j=0}^m\E{\norm{Y_{m-j}^{N,K}}_{H}^p}),
	\eee
	and the first part of Theorem~\ref{thm:stability} follows by the discrete Grönwall inequality.

	2. Let $e_m:= Y_m^{N,K}-\widehat Y_m^{N,K}$ for $m=0,\dots, M$ to obtain the error representation
	\bee
	\begin{split}
		\widehat e_{m+1} &=
		\sum_{j=0}^m r(\gD tA_{N_f})^jP_N\left(F(Y_{m-j}^{N,K}) - F(\widehat Y_{m-j}^{N,K})\right)\gD t \\
		&\quad+\sum_{j=0}^m r(\gD tA_{N_f})^jP_N\left(G(Y_{m-j}^{N,K}) - G(\widehat Y_{m-j}^{N,K})\right)\gD_{m-j}W_{M} \\
		&\quad+ \sum_{j=0}^m r(\gD tA_N)P_N \cG(Y_{m-j}^{N,K})\add{\gD_{m-j}}\cW_{m-j, K}
	\end{split}
	\eee
	Using that $F$ and $G$ are Lipschitz and repeating arguments from the first part of the proof yields
	\bee
	\begin{split}
		\E{\norm{e_{m+1}}_H^p}\le
		C\LR{\gD t\sum_{j=0}^m\E{\norm{e_j}_{H}^p} + \gD t^{p/2+1} \sum_{j=0}^m \E{\add{1+}\norm{Y_j^{\add{N,K}}}_{H}^p}}.
	\end{split}
	\eee
	The claim then follows with Grönwall's inequality, since we have shown that $\E{\norm{Y_j^{\add{N,K}}}_{H}^p}<\infty$ is uniformly bounded with respect to $\gD t$ in the first part.
\end{proof}

\section{Proof of Theorem~\ref{thm:MainRes} -- Antithetic Variance Decay}
\label{sec:app2}

Our strategy to prove Theorem~\ref{thm:MainRes} closely follows the approach in \cite{giles2014antithetic}. We bound $\bE(\|\ol Y_m - Y_m^c\|_H^2)$ by deriving appropriate difference equations of the antithetic average in~\eqref{eq:antiavg}
and by bounding higher-order remainder terms.
We introduce the \emph{semi-discrete temporal} fine discretizations $\widetilde Y^f:\gO\times\{0,\nicefrac{1}{2}, 1, \dots,M-\nicefrac{1}{2},M\}\to H$ via $\widetilde Y_0^f=X_0$,
\be\label{eq:semi-fine1}
\begin{split}
	\widetilde Y_{m+1/2}^f
	&= r(\mfd tA)\LR{ \widetilde Y_m^f
		+ F(\widetilde Y_m^f)\mfd t+G(\widetilde Y_m^f)\mfd_m W_{K_f}
		+ \cG(\widetilde Y_m^f)\mfd_m\cW_{m,K_f} },
\end{split}
\ee
and
\be\label{eq:semi-fine2}
\begin{split}
	\widetilde Y_{m+1}^f
	&= r(\mfd tA) \LR{ \widetilde Y_{m+1/2}^f
		+ F(\widetilde Y_{m+1/2} ^f)\mfd t+G(\widetilde Y_{m+1/2} ^f)\mfd_{m+1/2}
		+ \cG(\widetilde Y_{m+1/2} ^f)\mfd_{m+1/2} \cW_{m,K_f} }.
\end{split}
\ee
Note that we have used $A$ instead of $A_{N_f}$ as compared to  Equations~\eqref{eq:semi-fine1} and~\eqref{eq:semi-fine2}, hence $\widetilde Y_\cdot^f$ involves temporal and noise discretization, but no spatial approximation.
The corresponding antithetic fine semi-discretizations $\widetilde Y_\cdot^a$ are defined analogously by replacing $A_{N_f}$ by $A$ and $P_{N_f}$ by $I$ in~\eqref{eq:anti1} and~\eqref{eq:anti2}.

The next two auxiliary results establish a bound on $\widetilde Y_\cdot^f-Y_\cdot^f$;
\begin{lem}\label{lem:diff}
	Let Assumptions~\ref{ass:SPDE} and~\ref{ass:approximation}~\ref{item:rational} hold.
	For any $p\in(0,8]$ there is a constant $C=C(p)>0$ such that for all  $M, N_f, K_f\in\bN$ and $m=0,\nicefrac{1}{2}, 1, \dots,M-\nicefrac{1}{2}$ there holds
	\begin{equation}
		\E{\norm{Y_{m+1/2}^f-r(\mfd tA_{N_f}) Y_m^f}_H^p}
		+
		\E{\norm{Y_{m+1/2}^a-r(\mfd tA_{N_f}) Y_m^a}_H^p}\le C M^{-\nicefrac{p}{2}}.
	\end{equation}
\end{lem}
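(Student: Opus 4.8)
The plan is to expand the half-step increment directly from the scheme definition and to estimate the three resulting terms separately in $L^p(\gO;H)$. Since $Y_m^f\in V_{N_f}$ we have $P_{N_f}Y_m^f=Y_m^f$, so that~\eqref{eq:fine1} gives, at an integer index $m$,
\begin{equation*}
  Y_{m+1/2}^f-r(\mfd tA_{N_f})Y_m^f
  =r(\mfd tA_{N_f})P_{N_f}\LR{F(Y_m^f)\,\mfd t+G(Y_m^f)\,\mfd_m W_{K_f}+\cG(Y_m^f)\,\mfd_m\cW_{m,K_f}},
\end{equation*}
and the half-integer indices~\eqref{eq:fine2}, as well as the antithetic half-steps~\eqref{eq:anti1}--\eqref{eq:anti2}, have exactly the same structure with the corresponding length-$\mfd t$ Wiener and tensor-martingale increments. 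Using the triangle inequality together with $(a+b+c)^p\le 3^{p-1}(a^p+b^p+c^p)$, it suffices to bound each of the three contributions, where I would throughout invoke the contraction bound $\norm{r(\mfd tA_{N_f})P_{N_f}}_{\cL(H)}\le 1$ from Lemma~\ref{lem:rational-approx} and the uniform moment estimate $\max_m\E{\norm{Y_m^f}_H^p}\le C$ from Theorem~\ref{thm:stability}, which is independent of $M,N_f,K_f$.

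For the drift term, the contraction bound and the linear growth of $F$ give a contribution of order $\mfd t^p=\cO(M^{-p})$. For the diffusion term I would write it as the stochastic integral $\int_{t_m}^{t_m+\mfd t}r(\mfd tA_{N_f})P_{N_f}G(Y_m^f)\,dW_{K_f}(s)$ with an $\cF_{t_m}$-measurable, $s$-constant integrand, and apply the BDG inequality~\eqref{eq:BDG-W} (respectively the It\^o isometry~\eqref{eq:ito-isometry-W} when $p=2$); the contraction bound, the linear growth of $G$, and the moment estimate then yield $C\,\mfd t^{p/2}\,\E{\norm{G(Y_m^f)}_{\LHS(\cU,H)}^p}\le C\,\mfd t^{p/2}=\cO(M^{-p/2})$. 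For the Milstein correction term I would use the martingale representation~\eqref{eq:tensor_correction}, the continuous BDG inequality~\eqref{eq:BDG2}, and the quadratic variation bound~\eqref{eq:W_bracket}, exactly as for the term $\tIII$ in the proof of Theorem~\ref{thm:stability}; together with the linear growth of $\cG$ in~\eqref{eq:tensor_norm} this gives a contribution of order $\mfd t^p=\cO(M^{-p})$. As $\mfd t=\nicefrac{T}{2M}$, the diffusion term dominates, and summing the three estimates yields $\E{\norm{Y_{m+1/2}^f-r(\mfd tA_{N_f})Y_m^f}_H^p}\le C M^{-\nicefrac{p}{2}}$ for every $p\in[2,8]$; the range $p\in(0,2)$ then follows from the case $p=2$ by Jensen's inequality, $\E{\norm{\cdot}_H^p}\le\E{\norm{\cdot}_H^2}^{\nicefrac{p}{2}}$.

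The antithetic term is handled identically: interchanging the two half-step increments within each time step merely permutes i.i.d. increments, so $Y_\cdot^a\stackrel{d}{=}Y_\cdot^f$ and both the moment estimates and the per-step bounds hold verbatim. The main obstacle is the adaptedness bookkeeping: one must check that the moment bounds of Theorem~\ref{thm:stability} apply uniformly to the fine and antithetic schemes at \emph{both} integer and half-integer indices, and, crucially, that the ``misaligned'' increment $\mfd_{m+1/2}W_{K_f}$ driving the antithetic half-step is independent of the current state $Y_m^a$, so that the stochastic-integral representation underpinning the BDG/isometry step is legitimate. Once this is settled, the remaining estimates are routine applications of the isometry, the BDG inequalities, and the linear growth bounds already established.
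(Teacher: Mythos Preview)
Your proposal is correct and follows essentially the same approach as the paper: expand the half-step increment via the scheme, bound the drift, diffusion, and Milstein-correction contributions separately using the contraction of $r(\mfd tA_{N_f})P_{N_f}$, the linear growth of $F,G,\cG$, the BDG inequalities~\eqref{eq:BDG-W} and~\eqref{eq:BDG2} with~\eqref{eq:W_bracket}, and the uniform moment bound from Theorem~\ref{thm:stability}, then conclude that the diffusion term of order $\mfd t^{p/2}$ dominates. The paper dispatches the antithetic case in one line (``follows analogously''), whereas you spell out the distributional identity $Y_\cdot^a\stackrel{d}{=}Y_\cdot^f$ and the independence of $Y_m^a$ from the ``misaligned'' increment $\mfd_{m+1/2}W_{K_f}$; this is the right way to justify that the BDG/isometry step is legitimate for the antithetic half-step, and your explicit treatment of $p\in(0,2)$ via Jensen is likewise a detail the paper's proof leaves implicit.
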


\begin{proof}
	We have by Equation~\eqref{eq:fine1}, Lemma~\ref{lem:rational-approx} and Jensen's inequality that
	\begin{align*}
		&\E{\norm{Y_{m+1/2}^f-r(\mfd tA_{N_f}) Y_m^f}_H^p}\\
		&\le
		C\LR{\E{\norm{r(\mfd tA_{N_f})P_{N_f} F(Y_m^f)\mfd t}_H^p}
			+ \E{\norm{r(\mfd tA_{N_f})P_{N_f} G(Y_m^f)\mfd_m W_{K_f}}_H^p}} \\
		&\quad+
		C\E{\norm{\frac{r(\mfd tA_{N_f})P_N}{2}\cG(Y_m^f)\mfd_m\cW_{m,K_f}}_H^p}\\
		&\le C\LR{\E{\norm{F(Y_m^f)}_H^p} \mfd t^p
			+\E{\norm{G(Y_m^f)\mfd_m W_{K_f}}_H^p}}+C\E{\norm{\cG(Y_m^f)\mfd_m\cW_{m,K_f}}_H^p}.
	\end{align*}

	The second part of Corollary~\ref{cor:ito-BDG} shows together with~\eqref{eq:W_bracket}, Assumption~\ref{ass:SPDE}~\ref{item:FG-diff} and Proposition~\ref{prop:correction_est} that
	\begin{align*}
		\E{\norm{Y_{m+1/2}^f-r(\mfd tA_{N_f}) Y_m^f}_H^p}
		&=
		C\LR{\mfd t^p \E{1+\norm{Y_m^f}_H^p}
			+ \mfd t^{p/2} \E{\norm{G(Y_m^f)}_{\LHS(\cU,H)}^p} } \\
		&\quad+C \mfd t^p \E{\norm{\cG(Y_m^f)}_{\LHS(\LHS(\cU),H)}^p} \\
		&\le C \mfd t^{p/2} \E{1+\norm{Y_m^f}_H^p}  \\
		&\le C \mfd t^{p/2}.
	\end{align*}
	For the last step we have used that $\E{\norm{Y_m^f}_H^p}$ is uniformly bounded by Theorem~\ref{thm:stability}. The bound for $\E{\norm{Y_{m+1/2}^a-r(\mfd tA_{N_f}) Y_m^a}_H^p}$ follows analogously.
\end{proof}

\begin{lem}\label{lem:semi-error}
	Let Assumptions~\ref{ass:SPDE} and~\ref{ass:approximation} hold.
	Then, there is a constant $C=C(p)>0$ such that for all  $M, N_f, K_f\in\bN$ and $m=0,\nicefrac{1}{2}, 1, \dots,M-\nicefrac{1}{2}, M$ there holds
	\begin{alignat*}{2}
		\E{\norm{\widetilde Y_m^f}_{\dot H^\ga}^p+\norm{\widetilde Y_m^a}_{\dot H^\ga}^p}
		&\le C,\quad&&\text{for $p\in(0,8]$, \quad and}\\
		\E{\norm{Y_m^f-\widetilde Y_m^f}_H^p+\norm{Y_m^a-\widetilde Y_m^a}_H^p}
		&\le C \LR{M^{-p} + N_f^{-p\widetilde \ga} + K_f^{-p\del{2}\gb}}\quad&&\text{for $p\in(0,4]$}.
	\end{alignat*}
\end{lem}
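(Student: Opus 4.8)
The plan is to establish the two estimates separately, exploiting that the semi-discrete scheme $\widetilde Y^f_\cdot$ carries no spatial projection, so that the contractivity of $r(\mfd tA)$ on $\dot H^\ga$ from Lemma~\ref{lem:rational-approx} is available in the strong norm -- this is precisely why the regularity bound is stated for $\widetilde Y^f$ and not for $Y^f$, since $P_{N_f}$ is in general not stable on $\dot H^\ga$ when $\ga>1$. For the first bound I would unroll the two half-step recursions~\eqref{eq:semi-fine1}--\eqref{eq:semi-fine2} into a representation of the type~\eqref{eq:milstein_iterated}, but measuring every summand in $\dot H^\ga$. The deterministic weights $r(\mfd tA)^j$ are $\dot H^\ga$-contractions by Lemma~\ref{lem:rational-approx}, the drift and diffusion obey the $\dot H^\ga$ linear growth bounds of Assumption~\ref{ass:SPDE}~\ref{item:smooth-coefficients}, and the correction operator obeys the $\dot H^\ga$ growth bound~\eqref{eq:tensor_norm2}. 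Applying the $\dot H^\ga$-valued It\^o isometry and the BDG inequalities~\eqref{eq:BDG-W} and~\eqref{eq:BDG2} (the latter together with the quadratic variation bound~\eqref{eq:W_bracket} for the $\cW_{m,K_f}$-integral), using $X_0\in L^8(\gO;\dot H^\ga)$ from Assumption~\ref{ass:SPDE}~\ref{item:IC}, and closing with the discrete Grönwall inequality yields $\E{\norm{\widetilde Y_m^f}_{\dot H^\ga}^p}\le C$ uniformly in $m,M,N_f,K_f$ for all $p\in(0,8]$; the antithetic copy follows by symmetry. This mirrors part~1 of Theorem~\ref{thm:stability}, but in the stronger norm.

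For the second bound I would set $e_m:=Y_m^f-\widetilde Y_m^f$ and derive the half-step error recursion from~\eqref{eq:fine1}--\eqref{eq:fine2} and~\eqref{eq:semi-fine1}--\eqref{eq:semi-fine2}. Each contribution splits into a \emph{consistency} part, in which the operator difference $r(\mfd tA_{N_f})P_{N_f}-r(\mfd tA)$ acts on the semi-discrete integrands $F(\widetilde Y^f),G(\widetilde Y^f),\cG(\widetilde Y^f)$, and a \emph{stability} part, in which $r(\mfd tA_{N_f})P_{N_f}$ acts on the increments $F(Y^f)-F(\widetilde Y^f)$ and so forth. The consistency part is controlled by the rational- and subspace-approximation estimates of Lemma~\ref{lem:rational-approx} together with Assumption~\ref{ass:approximation}~\ref{item:subspace_approx}, which turn smoothness into the spatial factor $N_f^{-\widetilde\ga}$; here the finiteness of the required $\dot H^\ga$-norms of $F(\widetilde Y^f),G(\widetilde Y^f),\cG(\widetilde Y^f)$ is exactly what the first part of this lemma supplies. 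The stability part uses the global Lipschitz continuity of $F$ and $G$ implied by the bounded derivatives in Assumption~\ref{ass:SPDE}~\ref{item:FG-diff}, and, after the It\^o isometry and BDG inequalities are applied to the stochastic increments, feeds $\E{\norm{e_j}_H^p}$ back into a discrete Grönwall argument. The residual temporal factor $M^{-p}$ emerges from the same consistency analysis of the correction term, while $K_f^{-p\gb}$ is retained only so that the estimate matches the three-parameter form used in the proof of Theorem~\ref{thm:MainRes} -- both $Y^f$ and $\widetilde Y^f$ share the noise truncation $W_{K_f}$, so the $K_f$-term is non-binding here.

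The main obstacle is the Milstein correction term built from $\cG$. Unlike $F$ and $G$, Proposition~\ref{prop:correction_est} endows $\cG$ only with the \emph{linear} growth bound~\eqref{eq:tensor_norm} and a linearly growing derivative, so $\cG$ is not globally Lipschitz. To estimate the difference $\cG(Y^f)-\cG(\widetilde Y^f)$ I would integrate the derivative along the segment joining $\widetilde Y^f$ and $Y^f$, producing a factor $(1+\norm{Y^f}_H+\norm{\widetilde Y^f}_H)\norm{Y^f-\widetilde Y^f}_H$, and then apply Hölder's inequality to decouple the growth factor from the error factor. This decoupling is what halves the available integrability and restricts the second estimate to $p\in(0,4]$: the growth factor is absorbed using the uniform $L^8$-moments from Theorem~\ref{thm:stability} and the first part of this lemma, which is the technical reason for the $L^8$-hypothesis on $X_0$. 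Finally, the $\cW_{m,K_f}$-driven integral must be treated with the continuous-martingale BDG inequality~\eqref{eq:BDG2} and the $\mfd t^2$-scaling of $\dbracket{\cW_{m,K_f}}{\cW_{m,K_f}}$ from~\eqref{eq:W_bracket} in order to extract the correct powers of the time step.
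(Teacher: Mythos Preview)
Your treatment of the first estimate is essentially the paper's: unroll the semi-discrete recursion into the form~\eqref{eq:semi-fine3}, use the $\dot H^\ga$-contractivity of $r(\mfd tA)$ from Lemma~\ref{lem:rational-approx}, the $\dot H^\ga$ linear-growth bounds of Assumption~\ref{ass:SPDE}~\ref{item:smooth-coefficients} and~\eqref{eq:tensor_norm2}, the BDG inequalities~\eqref{eq:BDG-W} and~\eqref{eq:BDG2} together with~\eqref{eq:W_bracket}, and close with discrete Gr\"onwall.

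For the second estimate, however, your plan has a gap in the closure of the Milstein correction. After the mean-value bound $\norm{\cG(Y^f_j)-\cG(\widetilde Y^f_j)}\le C(1+\norm{Y^f_j}_H+\norm{\widetilde Y^f_j}_H)\norm{e_j}_H$ and the H\"older step you describe, the error factor that remains is $\E{\norm{e_j}_H^{2p}}^{1/2}$, i.e.\ a $2p$-th moment of $e_j$, not a $p$-th moment. This cannot be fed back into a Gr\"onwall loop run at moment $p$: closing the loop at moment $2p$ would, after the same H\"older step, demand moment $4p$, and so on. The paper breaks this circularity by a device you do not invoke: it bounds $\E{\norm{Y^f_j-\widetilde Y^f_j}_H^{2p}}^{1/2}$ \emph{a priori} by inserting the exact solution $X$ and applying the strong error estimate of Assumption~\ref{ass:approximation}~\ref{item:strong} (via Corollary~\ref{cor:strong-with-milstein} for $Y^f$, and its semi-discrete analogue for $\widetilde Y^f$), which for $2p\le 8$ yields $C(\mfd t^{p/2}+N_f^{-p\widetilde\ga}+K_f^{-p\gb})$. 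Multiplied by the extra $\mfd t^{p/2}$ coming from the $\cW$-quadratic-variation scaling~\eqref{eq:W_bracket}, this produces the $\mfd t^p$ contribution and renders the $\cG$ term a harmless source; only the Lipschitz $F,G$ differences are then fed back into Gr\"onwall. Your remark that the $K_f^{-p\gb}$ term is ``non-binding'' is actually a symptom of the missing step: in the paper's argument that term is not decorative but arises precisely from the two strong-error comparisons $Y^f\to X$ and $X\to\widetilde Y^f$, each of which carries a noise-truncation error even though $Y^f$ and $\widetilde Y^f$ share $W_{K_f}$.
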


\begin{proof}
	We may represent $\widetilde Y_m^f$ for any $m=0,\nicefrac{1}{2}, 1, \dots,M-\nicefrac{1}{2}, M$ by the expansion
	\be\label{eq:semi-fine3}
	\begin{split}
		\widetilde Y_m^f
		&= r(\mfd tA)^{2m} \widetilde Y_0^f
		+ \sum_{j=0}^{2m-1}r(\mfd tA)^{2n-j}\LR{
			F(\widetilde Y_{j/2}^f)\mfd t
			+G(\widetilde Y_{j/2}^f)\mfd_{j/2} W_{K_f}
			+\cG(\widetilde Y_{j/2}^f)\mfd_{j/2} \cW_{\floor{j/2},K_f} }.
	\end{split}
	\ee
	This in turn shows with the first part of Lemma~\ref{lem:rational-approx}, Jensen's inequality, and the second part of Corollary~\ref{cor:ito-BDG} for $p\in[2,8]$ that
	\begin{align*}
		\E{\norm{\widetilde Y_m^f}_{\dot H^\ga}^p}
		&\le
		C\left(
		\E{\norm{\widetilde Y_0^f}_{\dot H^\ga}^p}
		+
		\sum_{j=0}^{2m-1}
		\E{\norm{F(\widetilde Y_{j/2}^f)}_{\dot H^\ga}^p} (2m)^{p-1} \mfd t^p \right) \\
		&\quad +
		C \sum_{j=0}^{2m-1} \E{\norm{G(\widetilde Y_{j/2}^f)}_{\LHS(\cU, \dot H^\ga)}^p} (2m)^{p/2-1} \mfd t^{p/2}    \\
		&\quad + C\sum_{j=0}^{2m-1}
		\E{\norm{\cG(\widetilde Y_{j/2}^f)}_{\LHS(\cL_1(\cU), \dot H^\ga)}^p}
		(2m)^{p/2-1} \mfd t^{p}\\
		&\le C\left(
		\E{\norm{\widetilde Y_0^f}_{\dot H^\ga}^p}
		+
		\mfd t \sum_{j=0}^{2m-1}
		\E{1+\norm{\widetilde Y_{j/2}^f}_{\dot H^\ga}^p} \right) \\
		&\le C\left(
		\E{\norm{X_0}_{\dot H^\ga}^p}
		+ 1+ \mfd t \sum_{j=0}^{2m-1}
		\E{\norm{\widetilde Y_{j/2}^f}_{\dot H^\ga}^p}\right).
	\end{align*}
	We have used Assumption~\ref{ass:SPDE}~\ref{item:smooth-coefficients} and Proposition~\ref{prop:correction_est} to derive the second inequality.
	The discrete Grönwall inequality now shows that
	\begin{equation}\label{eq:semi-bound}
		\E{\norm{\widetilde Y_m^f}_{\dot H^\ga}^p}
		\le C\LR{ 1 + \E{\norm{X_0}_{\dot H^\ga}^p} }
		<\infty.
              \end{equation}
	Thus, $\E{\norm{\widetilde Y_m^f}_{\dot H^\ga}^p}<\infty$ for all $p\in(0,8]$,
	and $\E{\norm{\widetilde Y_m^a}_{\dot H^\ga}^p}<\infty$ follows analogously.

	To show the second part, we use again~\eqref{eq:semi-fine3} and repeat the above reasoning to find for $p\in[2,4]$ that
	\be\label{eq:semi_diff}
	\begin{split}
		\E{\norm{Y_m^f - \widetilde Y_m^f}_H^p}
		&\le
		C \E{\norm{Y_0^f - \widetilde Y_0^f}_H^p}
		+
		C \mfd t \sum_{j=0}^{2m-1}
		\E{\norm{F(Y_{j/2}^f)-F(\widetilde Y_{j/2}^f)}_H^p} \\
		&\quad+C \mfd t\sum_{j=0}^{2m-1}
		\E{\norm{G(Y_{j/2}^f) - G(\widetilde Y_{j/2}^f)}_{\LHS(\cU, H)}^p }  \\
		&\quad+C \mfd t^{1+p/2}\sum_{j=0}^{2m-1}
		\E{\norm{\cG(Y_{j/2}^f) - \cG(\widetilde Y_{j/2}^f)}_{\LHS(\LHS(\cU), H)}^p }\\
		&\le
		C \LR{ \E{\norm{(P_{N_f}-I)X_0}_H^p} + \mfd t \sum_{j=0}^{2m-1}
			\E{\norm{Y_{j/2}^f-\widetilde Y_{j/2}^f}_H^p} } \\
		&\quad+C \mfd t^{1+p/2}\sum_{j=0}^{2m-1}
		\E{\norm{\cG(Y_{j/2}^f) - \cG(\widetilde Y_{j/2}^f)}_{\LHS(\LHS(\cU), H)}^p }.
	\end{split}
	\ee
	The second estimate follows since $F$ and $G$ are Fr\'echet differentiable with globally bounded derivatives.
	To bound the last term in~\eqref{eq:semi_diff}, we recall that $\cG'$ satisifies the linear growth bound derived in Proposition~\ref{prop:correction_est} to obtain by Hölder's inequality and~\eqref{eq:semi-bound}
	\begin{align*}
		\E{\norm{\cG(Y_{j/2}^f) - \cG(\widetilde Y_{j/2}^f)}_{\LHS(\LHS(\cU), H)}^p }
		&\le C\E{
			\max\LR{1, \norm{Y_{j/2}^f}_H, \norm{\widetilde Y_{j/2}^f}_H}^p
			\norm{Y_{j/2}^f-\widetilde Y_{j/2}^f}_H^p} \\
		&\le
		C \E{\norm{Y_{j/2}^f-\widetilde Y_{j/2}^f}_H^{2p}}^{\nicefrac{1}{2}}.
	\end{align*}
	Assumption~\ref{ass:approximation}~\ref{item:strong} implies that the semi-discrete approximation satisfies for $p\in[2,4]$ that
	\begin{align*}
		\E{\norm{Y\LR{\frac{j\gd t}{2}}-\widetilde Y_{j/2}^f}_H^{2p}}^{\nicefrac{1}{2}}\le C\LR{\mfd t^{p/2} + K_f^{-p\gb}}.
	\end{align*}
	Together with the first part of Theorem~\ref{thm:stability} and the strong error from Assumption~\ref{ass:approximation}~\ref{item:strong} this shows
	\be\label{eq:semi_diff2}
	\begin{split}
		&\E{\norm{\cG(Y_{j/2}^f) - \cG(\widetilde Y_{j/2}^f)}_{\LHS(\LHS(\cU), H)}^p } \\
		&\quad\le  C\LR{\E{\norm{Y_{j/2}^f-Y\LR{\frac{j\gd t}{2}}}_H^{2p}}^{\nicefrac{1}{2}}
			+ \E{\norm{Y\LR{\frac{j\gd t}{2}}-\widetilde Y_{j/2}^f}_H^{2p}}^{\nicefrac{1}{2}} } \\
		&\quad\le C\LR{\mfd t^{p/2} + N_f^{-p\widetilde \ga} + K_f^{-p\gb}}.
	\end{split}
	\ee
	Substituting~\eqref{eq:semi_diff2} back to~\eqref{eq:semi_diff} now yields with Assumption~\ref{ass:approximation}~\ref{item:subspace_approx}
	\bee
	\begin{split}
		\E{\norm{Y_m^f - \widetilde Y_m^f}_H^p}
		&\le C \LR{  N_f^{-p\widetilde \ga} +
			\mfd t \sum_{j=0}^{2m-1} \E{\norm{Y_{j/2}^f-\widetilde Y_{j/2}^f}_H^p}
			+ \mfd t^{p/2}\LR{\mfd t^{p/2} + N_f^{-p\widetilde \ga}+ K_f^{-p\gb} } } \\
		&\le C \LR{  \mfd t^p + N_f^{-p\widetilde \ga} + K_f^{-p\del{2}\gb} +
			\mfd t \sum_{j=0}^{2m-1} \E{\norm{Y_{j/2}^f-\widetilde Y_{j/2}^f}_H^p} }.
	\end{split}
	\eee
	The claim for $p\in[2,4]$ follows by applying the discrete Grönwall inequality, and the estimate for $p\in(0,2)$ is then obtained immediately by Hölder's inequality.
\end{proof}

Lemma~\ref{lem:semi-error} enables us to derive a difference equation on the fine approximation $Y_\cdot^f$.

\begin{prop}\label{prop:fine-error}
	Let Assumptions~\ref{ass:SPDE} and~\ref{ass:approximation} hold.
	Then, for any $m=0,\dots, M-1$ there holds that
	\begin{equation}
		\begin{split}\label{eq:fine_expansion}
			Y_{m+1}^f
			&=: r(\mfd tA_{N_f})^2P_{N_f}\LR{ Y_m^f
				+ F(Y_m^f)\gD t + G(Y_m^f)\gD_m W_{K_f} + \cG(Y_m^f)\gD_m\cW_{n,K_f   } } \\
			&\quad- r(\mfd tA_{N_f})^2P_{N_f}\cG(Y_m^f)
			\LR{\mfd_{m+1/2}W_{K_f}\otimes\mfd_mW_{K_f} - \mfd_m W_{K_f}\otimes\mfd_{m+1/2}W_{K_f}}
			+ \Xi_m^f + O_m^f,
		\end{split}
	\end{equation}
	where $\Xi_m^f, O_m^f:\gO\to H$ are random variables such that
	\begin{align*}
		&\E{\norm{\Xi_m^f}_H^2}\le C\mfd t^2
		\LR{M^{-\min(\ga, 2)} + N_f^{-2\widetilde \ga} + K_f^{-4\gb}},\\
		&\E{O_m^f\,\big|\cF_{t_m}}=0\quad\text{and}\quad
		\E{\norm{O_m^f}_H^2}\le C\mfd t
		\LR{M^{-\min(\ga, 2)} + N_f^{-2\widetilde \ga} + K_f^{-4\gb}}.
	\end{align*}
	The constant $C$ is independent of $M, N_f$ and $K_f$.
\end{prop}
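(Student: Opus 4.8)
The plan is to substitute the first fine half-step \eqref{eq:fine1} into the second \eqref{eq:fine2} and to Taylor-expand the coefficients $F,G,\cG$ evaluated at the half-step around $Y_m^f$, thereby re-assembling two steps of size $\mfd t$ into a single effective step of size $\gD t=2\mfd t$. Writing $R:=r(\mfd tA_{N_f})P_{N_f}$, the term $RY_{m+1/2}^f$ reproduces $R^2$ applied to $Y_m^f+F(Y_m^f)\mfd t+G(Y_m^f)\mfd_m W_{K_f}+\cG(Y_m^f)\mfd_m\cW_{m,K_f}$ exactly, while the leading-order Taylor terms of $RF(Y_{m+1/2}^f)\mfd t$, $RG(Y_{m+1/2}^f)\mfd_{m+1/2}W_{K_f}$ and $R\cG(Y_{m+1/2}^f)\mfd_{m+1/2}\cW_{m,K_f}$ supply $R^2F(Y_m^f)\mfd t$, $R^2G(Y_m^f)\mfd_{m+1/2}W_{K_f}$ and $R^2\cG(Y_m^f)\mfd_{m+1/2}\cW_{m,K_f}$, respectively, after replacing the spurious outer factor $R$ by $R^2$ at the cost of $R(I-R)$-corrections. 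The decisive contribution is the first-order part of the diffusion expansion: since the dominant part of $Y_{m+1/2}^f-Y_m^f$ is $RG(Y_m^f)\mfd_m W_{K_f}$ and $\cG(Y_m^f)(\phi\otimes\varphi)=\tfrac12 G'(Y_m^f)(P_{N_f}G(Y_m^f)\phi)\varphi$ by Proposition~\ref{prop:correction_est}, this term equals $2R^2\cG(Y_m^f)(\mfd_m W_{K_f}\otimes\mfd_{m+1/2}W_{K_f})$ up to remainders.

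Next I would invoke \eqref{eq:tensor_increment} to recognize that the collected leading terms are precisely $R^2\big(Y_m^f+F(Y_m^f)\gD t+G(Y_m^f)\gD_m W_{K_f}+\cG(Y_m^f)\gD_m\cW_{m,K_f}\big)$ minus the antisymmetric term $R^2\cG(Y_m^f)\big(\mfd_{m+1/2}W_{K_f}\otimes\mfd_m W_{K_f}-\mfd_m W_{K_f}\otimes\mfd_{m+1/2}W_{K_f}\big)$. Indeed, expanding $\cG(Y_m^f)\gD_m\cW_{m,K_f}$ via \eqref{eq:tensor_increment} and removing that antisymmetric piece leaves $\cG(Y_m^f)\mfd_{m+1/2}\cW_{m,K_f}+\cG(Y_m^f)\mfd_m\cW_{m,K_f}+2\cG(Y_m^f)(\mfd_m W_{K_f}\otimes\mfd_{m+1/2}W_{K_f})$, matching the three blocks produced above. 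This is exactly the first two lines of \eqref{eq:fine_expansion}.

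All residual terms are routed into $O_m^f$ or $\Xi_m^f$. Into $O_m^f$ go the genuine martingale increments with $\E{\,\cdot\mid\cF_{t_m}}=0$: the semigroup correction $R(I-R)G(Y_m^f)\mfd_{m+1/2}W_{K_f}$, the lower-order pieces of the first-order diffusion Taylor term (those pairing $\mfd_{m+1/2}W_{K_f}$ with $\cF_{t_m}$-measurable or with $\mfd t$- and $\mfd_m\cW_{m,K_f}$-sized factors), and the $G''$ Taylor remainder tested against $\mfd_{m+1/2}W_{K_f}$; their conditional mean vanishes because $\mfd_{m+1/2}W_{K_f}$ is centered and independent of $\cF_{t_m+\mfd t}$. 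Into $\Xi_m^f$ go the $\cF_{t_m}$-measurable corrections such as $R(I-R)F(Y_m^f)\mfd t$ and $R(I-R)\cG(Y_m^f)\mfd_m\cW_{m,K_f}$, and the Taylor remainders of $F$ and $\cG$. To bound these I would use $\|R^jP_N\|_{\cL(H)}\le1$ and the approximation estimates of Lemma~\ref{lem:rational-approx} to extract a factor $\mfd t^{\min(\ga,2)/2}+N_f^{-\widetilde\ga}$ from each $(I-R)$, the growth bounds for $\cG,\cG'$ from Proposition~\ref{prop:correction_est}, the increment control of Lemma~\ref{lem:diff}, and the conditional It\^o isometry/BDG inequality of Corollary~\ref{cor:ito-BDG} together with \eqref{eq:W_bracket} to turn products of $W_{K_f}$-increments into powers of $\mfd t$; wherever a $\dot H^\ga$-norm of the discrete solution is required I would pass to the semi-discrete proxy $\widetilde Y^f$ via Lemma~\ref{lem:semi-error}.

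The main obstacle is the bookkeeping, not any single estimate: one must expand to exactly first order in the stochastic increment, retain the $G''$-remainders, track the numerous products of half-step increments, and assign each residual to $\Xi_m^f$ or $O_m^f$ with the correct power of $\mfd t$ and the correct spatial and noise rates, all while preserving the zero-conditional-mean structure of $O_m^f$. The most delicate residuals are the $(I-r(\mfd tA_{N_f}))$-type corrections, which demand higher spatial regularity of the \emph{fully} discrete $Y_m^f$; these must be controlled through $\widetilde Y^f$ and the discrete smoothing bounds of Lemma~\ref{lem:rational-approx} so that they land at order $M^{-\min(\ga,2)}+N_f^{-2\widetilde\ga}$, with only higher-order $K_f$-dependence, thereby meeting the claimed $O_m^f$ and $\Xi_m^f$ bounds.
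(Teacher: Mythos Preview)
Your plan is essentially the paper's: substitute \eqref{eq:fine1} into \eqref{eq:fine2}, invoke \eqref{eq:tensor_increment} to reconstitute $\cG(Y_m^f)\gD_m\cW_{m,K_f}$ and expose the antisymmetric L\'evy-area piece, Taylor-expand the diffusion to second order so that $G'(Y_m^f)(RG(Y_m^f)\mfd_mW_{K_f})\mfd_{m+1/2}W_{K_f}$ produces the needed $2R^2\cG(Y_m^f)(\mfd_mW_{K_f}\otimes\mfd_{m+1/2}W_{K_f})$, and then grind the remainders through Lemmas~\ref{lem:rational-approx}, \ref{lem:diff}, \ref{lem:semi-error} and Proposition~\ref{prop:correction_est}, passing to the semi-discrete proxy $\widetilde Y^f$ whenever a $\dot H^\ga$-bound is required.

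The one organizational difference is the choice of expansion point: the paper Taylor-expands $F,G,\cG$ around $r(\mfd tA_{N_f})Y_m^f$ rather than around $Y_m^f$. This absorbs the semigroup shift into the base point, so the ``$R$ versus $R^2$'' corrections appear as separate commutator-type terms $F(r(\mfd tA_{N_f})Y_m^f)-r(\mfd tA_{N_f})P_{N_f}F(Y_m^f)$ (and similarly for $G,\cG$), which are then handled directly by Lemma~\ref{lem:rational-approx} and the $\widetilde Y^f$ substitution. Your decomposition via $R=R^2+R(I-R)$ is equivalent and yields the same bounds. One small slip: the term $R(I-R)\cG(Y_m^f)\mfd_{m+1/2}\cW_{m,K_f}$ (you wrote $\mfd_m$, but the second-half-step $\cG$ carries $\mfd_{m+1/2}$) has zero $\cF_{t_m}$-conditional mean, so it can sit in $O_m^f$; putting it in $\Xi_m^f$ is harmless since its second moment is $\cO(\mfd t^2(\mfd t^{\min(\ga,2)}+N_f^{-2\widetilde\ga}))$ anyway.
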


\begin{proof}
	Assume for simplicity that Assumptions~\ref{ass:SPDE} and~\ref{ass:approximation} hold with $\ga\in[1,2]$.
	Equations~\eqref{eq:fine2} and~\eqref{eq:fine1} show that
	\begin{align*}
		Y_{m+1}^f
		&= r(\mfd tA_{N_f})^2P_{N_f}\LR{ Y_m^f
			+ F(Y_m^f)\gD t + G(Y_m^f)\mfd_m W_{K_f} + \cG(Y_m^f)\mfd_m\cW_{m,K_f} } \\
		&\quad+ r(\mfd tA_{N_f})P_{N_f}\LR{F(Y_{m+1/2}^f)\gD t
			+ G(Y_{m+1/2}^f)\mfd_{m+1/2} W_{K_f} + \cG(Y_{m+1/2}^f)\mfd_{m+1/2}\cW_{m,K_f}},
	\end{align*}
	and by Equation \eqref{eq:tensor_increment} we have
	\begin{equation*}
		\mfd_m\cW_{m,K_f} = \gD_m\cW_{m,K_f} - \mfd_{m+1/2}\cW_{m,K_f}
		- \mfd_{m+1/2}W_{K_f}\otimes \mfd_mW_{K_f}
		- \mfd_mW_{K_f}\otimes \mfd_{m+1/2}W_{K_f}.
	\end{equation*}
	Rearranging some terms yields
	\begin{align*}
		Y_{m+1}^f
		&= r(\mfd tA_{N_f})^2P_{N_f}\LR{ Y_m^f
			+ F(Y_m^f)\gD t + G(Y_m^f)\gD_m W_{K_f} + \cG(Y_m^f)\gD_m\cW_{n,K_f   } }\\
		&\quad- r(\mfd tA_{N_f})^2P_{N_f}\cG(Y_m^f)
		\LR{\mfd_{m+1/2}W_{K_f}\otimes\mfd_mW_{K_f} - \mfd_m W_{K_f}\otimes\mfd_{m+1/2}W_{K_f}}\\
		&\quad+ r(\mfd tA_{N_f})P_{N_f}[F(Y_{m+1/2}^f) - r(\mfd tA_{N_f})P_{N_f} F(Y_m^f)]\mfd t \\
		&\quad+r(\mfd tA_{N_f})P_{N_f}\left[G(Y_{m+1/2}^f) - r(\mfd tA_{N_f})P_{N_f} G(Y_m^f)\right]\mfd_{m+1/2} W_{K_f}\\
		&\quad-2r(\mfd tA_{N_f})P_{N_f}
		r(\mfd tA_{N_f})P_{N_f}\cG(Y_m^f)\mfd_m W_{K_f}\otimes\mfd_{m+1/2}W_{K_f}\\
		&\quad+ r(\mfd tA_{N_f})P_{N_f}
		\left[\cG(Y_{m+1/2}^f)-r(\mfd tA_{N_f})P_{N_f}\cG(Y_m^f)\right]\mfd_{m+1/2}\cW_{m,K_f}.
	\end{align*}
	The first two lines in the above equation correspond to the first two terms on the right hand side in~\eqref{eq:fine_expansion}, and we label the remaining terms via
	\begin{align*}
		\tI_m^f&:=[F(Y_{m+1/2}^f) - r(\mfd tA_{N_f})P_{N_f} F(Y_m^f)]\mfd t, \\
		\tII_m^f&:=\left[G(Y_{m+1/2}^f) - r(\mfd tA_{N_f})P_{N_f} G(Y_m^f)\right]\mfd_{m+1/2} W_{K_f}
		- 2r(\mfd tA_{N_f})P_{N_f}\cG(Y_m^f)\mfd_m W_{K_f}\otimes\mfd_{m+1/2}W_{K_f} \\
		\tI\tII_m^f&:=\left[\cG(Y_{m+1/2}^f)-r(\mfd tA_{N_f})P_{N_f}\cG(Y_m^f)\right]\mfd_{m+1/2}\cW_{m,K_f},
	\end{align*}
	to obtain
	\be\label{eq:error_split}
	\begin{split}
		Y_{m+1}^f
		&= r(\mfd tA_{N_f})^2P_{N_f}\LR{ Y_m^f
			+ F(Y_m^f)\gD t + G(Y_m^f)\gD_m W_{K_f} + \cG(Y_m^f)\gD_m\cW_{n,K_f   } }\\
		&\quad- r(\mfd tA_{N_f})^2P_{N_f}\cG(Y_m^f)
		\LR{\mfd_m W_{K_f}\otimes\mfd_{m+1/2}W_{K_f}
			-\mfd_{m+1/2}W_{K_f}\otimes\mfd_mW_{K_f}}\\
		&\quad+ r(\mfd tA_{N_f})P_{N_f}\left[\tI_m^f + \tII_m^f + \tI\tII_m^f\right].
	\end{split}
	\ee

	We split the first term $\tI_m^f$ further into
	\begin{align*}
		\tI_m^f
		=
		\left[F(Y_{m+1/2}^f) - F(r(\mfd tA_{N_f}) Y_m^f)
		+ F(r(\mfd tA_{N_f}) Y_m^f) - r(\mfd tA_{N_f})P_{N_f} F(Y_m^f)\right]\mfd t
		=:\LR{\tI_{m}^{f,1} + \tI_{m}^{f,2}}\mfd t.
	\end{align*}
	A first order Taylor expansion of $\tI_{m}^{f,1}$ then yields for some $\xi_m^1\in H$
	\be \label{eq:F1-bound}
	\begin{split}
		\tI_{m}^{f,1} &= F(Y_{m+1/2}^f) - F(r(\mfd tA_{N_f}) Y_m^f) \\
		&=
		F'(\xi_m^1)\LR{Y_{m+1/2}^f-r(\mfd tA_{N_f}) Y_m^f}\\
		&=
		F'(\xi_m^1)r(\mfd tA_{N_f})
		\LR{ F(Y_m^f)\gD t + G(Y_m^f)\mfd_m W_{K_f} + \cG(Y_m^f)\mfd_m\cW_{m,K_f} }.
	\end{split}
	\ee
	As $F, G$ and $\cG$ are of linear growth and $\bE(\|Y_m^f\|_H^2)<\infty$ is uniformly bounded by Theorem~\ref{thm:stability}, we have
	\begin{equation*}
		\tI_{m}^{f,1}=\Xi_{m}^{f,1}+O_{m}^{f,1},
	\end{equation*}
	where $\Xi_{m}^{f,1}, O_{m}^{f,1}:\gO\to H$ are random variables such that $\bE(\|\Xi_{m}^{f,1}\|_H^2)\le C\mfd t^2$, $\bE(O_{m}^{f,1}\,|\cF_{t_m})=0$ and $\bE(\|O_{m}^{f,1}\|_H^2)\le C\mfd t$ holds for an independent constant $C>0$.
	To bound $\tI_{m}^{f,2}$, we use first order Taylor expansions around $Y_m^f$ and $\widetilde Y_m^f$ to show that for some $\xi_m^2, \widetilde \xi_m^2\in H$ there holds
	\begin{equation}\label{eq:F2-bound}
		\begin{split}
			\tI_{m}^{f,2}
			&=
			F(r(\mfd tA_{N_f}) Y_m^f) - r(\mfd tA_{N_f})P_{N_f} F(Y_m^f)\\
			&=
			F(Y_m^f)
			+
			F'(\xi_m^2)
			\Big[ r(\mfd tA_{N_f}) Y_m^f - Y_m^f\Big]
			- r(\mfd tA_{N_f})P_{N_f} F(Y_m^f)\\
			&=
			\Big[I- r(\mfd tA_{N_f}) P_{N_f}\Big]F(Y_m^f)
			+
			F'(\xi_m^2)
			\Big[ r(\mfd tA_{N_f})  - I \Big]Y_m^f  \\
			&=
			\Big[I- r(\mfd tA_{N_f}) P_{N_f}\Big]\left(F(\widetilde Y_m^f) +
			F'({\widetilde \xi}_m^2) (Y_m^f - \widetilde Y_m^f)\right)
			+
			F'(\xi_m^2)
			\Big[ r(\mfd tA_{N_f})  - I \Big]
			\left(\widetilde Y_m^f + Y_m^f - \widetilde Y_m^f \right).
		\end{split}
	\end{equation}
	Lemmas~\ref{lem:rational-approx} and~\ref{lem:semi-error}
	thus show together with Items~\ref{item:FG-diff} and~\ref{item:smooth-coefficients} in Assumption~\ref{ass:SPDE} that
	\begin{align*}
		\bE(\|\tI_{m}^{f,2}\|_H^2)
		\le C \left(\mfd t^\ga + \mfd t^2 + N_f^{-2\widetilde \ga} + K_f^{-4\gb}\right)
		\le C \left(\mfd t^\ga + N_f^{-2\widetilde \ga} + K_f^{-4\gb}\right).
	\end{align*}
	As $\| r(\mfd tA_{N_f})P_{N_f}\|_{\cL(H)}\le 1$ by Lemma~\ref{lem:rational-approx} this in turn shows that
	\begin{equation}\label{eq:aux1}
		r(\mfd tA_{N_f})P_{N_f}\tI_m^f = r(\mfd tA_{N_f})P_{N_f}\LR{\tI_{m}^{f,1}+\tI_{m}^{f,2}} \mfd t
		=
		\Xi_{m}^{f, \tI}+O_{m}^{f, \tI},
	\end{equation}
	where $\Xi_{m}^{f, \tI}, O_{m}^{f, \tI}:\gO\to H$ are random variables such that for an independent $C>0$ there holds
	\begin{equation*}
		\E{\norm{\Xi_{m}^{f, \tI}}_H^2}\le C\mfd t^2\LR{\mfd t^\ga + N_f^{-2\widetilde \ga}+ K_f^{-4\gb}},
		\quad
		\E{O_{m}^{f, \tI}\,\big|\cF_{t_m}}=0\quad\text{and}\quad
		\E{\norm{O_{m}^{f, \tI}}_H^2}\le C\mfd t^3.
	\end{equation*}

	We expand the second term $\tII_m^f$ in~\eqref{eq:error_split} via
	\be\label{eq:G_split}
	\begin{split}
		\tII_m^f&=
		\left[G(Y_{m+1/2}^f) - G(r(\mfd tA_{N_f})Y_m^f)\right]\mfd_{m+1/2} W_{K_f}
		- 2r(\mfd tA_{N_f})P_{N_f}\cG(Y_m^f)\mfd_m W_{K_f}\otimes\mfd_{m+1/2}W_{K_f}  \\
		&\quad+\left[G(r(\mfd tA_{N_f})Y_m^f) - r(\mfd tA_{N_f})P_{N_f} G(Y_m^f)\right]\mfd_{m+1/2} W_{K_f} \\
		&=:\tII_{m}^{f,1} + \tII_{m}^{f,2}.
	\end{split}
	\ee
	We observe that $\bE(\tII_{m}^{f,1}|\cF_{t_m})=0$ and obtain by a second order Taylor expansion of $G$ around $r(\mfd tA_{N_f})Y_m^f$ that for some $\xi_m^\tII\in H$ it holds
	\begin{align*}
		\tII_{m}^{f,1}
		&=
		G'(r(\mfd tA_{N_f}) Y_m^f)
		\left(Y_{m+1/2}^f - r(\mfd tA_{N_f})Y_m^f\right)\mfd_{m+1/2} W_{K_f} \\
		&\quad+
		\frac{1}{2}G''(\xi_m^\tII)
		\left(Y_{m+1/2}^f-r(\mfd tA_{N_f}) Y_m^f,\, Y_{m+1/2}^f-r(\mfd tA_{N_f}) Y_m^f\right)
		\mfd_{m+1/2} W_{K_f} \\
		&\quad- 2r(\mfd tA_{N_f})P_{N_f}\cG(Y_m^f)\mfd_m W_{K_f}\otimes\mfd_{m+1/2}W_{K_f}  \\
		&=
		G'(r(\mfd tA_{N_f}) Y_m^f)r(\mfd tA_{N_f})P_{N_f}
		\left( F(Y_m^f)\mfd t+G(Y_m^f)\mfd_m W_{K_f} + \cG(Y_m^f)\mfd_m \cW_{m,K_f}\right)
		\mfd_{m+1/2} W_{K_f}  \\
		&\quad+
		\frac{1}{2}G''(\xi_m^\tII)
		\left(Y_{m+1/2}^f-r(\mfd tA_{N_f}) Y_m^f,\, Y_{m+1/2}^f-r(\mfd tA_{N_f}) Y_m^f\right)
		\mfd_{m+1/2} W_{K_f} \\
		&\quad- r(\mfd tA_{N_f})P_{N_f} G'(Y_m^f)(P_{N_f} G(Y_m^f)\mfd_m W_{K_f})\mfd_{m+1/2}W_{K_f} \\
		&=
		G'(r(\mfd tA_{N_f}) Y_m^f)r(\mfd tA_{N_f})P_{N_f}
		\left( F(Y_m^f)\mfd t + \cG(Y_m^f)\mfd_m \cW_{m,K_f}\right)
		\mfd_{m+1/2} W_{K_f}  \\
		&\quad +\frac{1}{2}G''(\xi_m^\tII)
		\left(Y_{m+1/2}^f-r(\mfd tA_{N_f}) Y_m^f,\, Y_{m+1/2}^f-r(\mfd tA_{N_f}) Y_m^f\right)
		\mfd_{m+1/2} W_{K_f} \\
		&\quad +G'(r(\mfd tA_{N_f}) Y_m^f)
		\left(r(\mfd tA_{N_f})P_{N_f} G(Y_m^f)\mfd_m W_{K_f}\right)\mfd_{m+1/2} W_{K_f} \\
		&\quad -r(\mfd tA_{N_f})P_{N_f} G'(Y_m^f)(P_{N_f} G(Y_m^f)\mfd_m W_{K_f})\mfd_{m+1/2} W_{K_f} \\
		&=: \widetilde{\tI}_{m}^{f,1} + \widetilde{\tII}_{m}^{f,1} + \widetilde{\tIII}_{m}^{f,1}.
	\end{align*}
	The second identity follows by Proposition~\ref{prop:correction_est} since
	\begin{equation*}
		\cG(Y_m^f)\mfd_m W_{K_f}\otimes\mfd_{m+1/2}W_{K_f}
		=
		G'(Y_m^f)(P_{N_f} G(Y_m^f)\mfd_m W_{K_f})\mfd_{m+1/2}W_{K_f}.
	\end{equation*}
	As $F$ and $\cG$ are of linear growth (see Proposition~\ref{prop:correction_est}) and $G'$ is bounded, it follows by the independence of $Y_m$, $\mfd_m \cW_{m,K_f}$ and $\mfd_{m+1/2} W_{K_f}$, Lemma~\ref{lem:ito-BDG} and Theorem~\ref{thm:stability} that
	\begin{equation*}
		\E{\norm{\widetilde{\tI}_{m}^{f,1}}_H^2}
		\le C\LR{1+\E{\norm{Y_m^f}_H^2}}\mfd t^3
		\le C\mfd t^3.
	\end{equation*}
	Similarly, Lemma~\ref{lem:diff} yields with the boundedness of $G''$ that
	\begin{equation*}
		\E{\norm{\widetilde{\tII}_{m}^{f,1}}_H^2}
		\le C\E{\norm{Y_{m+1/2}^f-r(\mfd tA_{N_f}) Y_m^f}_H^4}\mfd t
		\le C\mfd t^3.
	\end{equation*}

	We use Taylor expansion to split the integrand in $\widetilde{\tIII}_{m}^{f,1}$ for some $\widetilde \xi_m^\tIII\in H$ into
	\begin{align*}
		&G'(r(\mfd tA_{N_f}) Y_m^f)
		\left(r(\mfd tA_{N_f})P_{N_f} G(Y_m^f)\mfd_m W_{K_f}\right)
		-r(\mfd tA_{N_f})P_{N_f} G'(Y_m^f)(P_{N_f} G(Y_m^f)\mfd_m W_{K_f}) \\
		&\quad=
		\left[G'(r(\mfd tA_{N_f}) Y_m^f) - G'(Y_m^f)\right]
		\left(r(\mfd tA_{N_f})P_{N_f} G(Y_m^f)\mfd_m W_{K_f}\right)\\
		&\qquad+G'(Y_m^f)
		\left(r(\mfd tA_{N_f})P_{N_f} G(Y_m^f)\mfd_m W_{K_f}\right)
		- G'(Y_m^f)(P_{N_f} G(Y_m^f)\mfd_m W_{K_f}) \\
		&\qquad+G'(Y_m^f)(P_{N_f} G(Y_m^f)\mfd_m W_{K_f})
		-r(\mfd tA_{N_f})P_{N_f} G'(Y_m^f)(P_{N_f} G(Y_m^f)\mfd_m W_{K_f}) \\
		&\quad=
		G''(\widetilde \xi_m^\tIII)\left[(r(\mfd tA_{N_f}) - I) Y_m^f, r(\mfd tA_{N_f})P_{N_f} G(Y_m^f)\mfd_m W_{K_f}\right]\\
		&\qquad+G'(Y_m^f)\left[(r(\mfd tA_{N_f}) - I) P_{N_f} G(Y_m^f)\mfd_m W_{K_f}\right] \\
		&\qquad+(I-r(\mfd tA_{N_f})P_{N_f}) G'(Y_m^f)(P_{N_f} G(Y_m^f)\mfd_m W_{K_f}).
	\end{align*}

	We then use Assumption~\ref{ass:SPDE}~\ref{item:FG-diff} together with Corollary~\ref{cor:ito-BDG} and Theorem~\ref{thm:stability} to estimate
	\begin{align*}
		\E{\norm{\widetilde{\tIII}_{m}^{f,1}}_H^2}
		\le
		&3\bE\left(\left\|
		G''(\widetilde \xi_m^\tIII)\left[(r(\mfd tA_{N_f}) - I) Y_m^f, r(\mfd tA_{N_f})P_{N_f} G(Y_m^f)\mfd_m W_{K_f}\right]
		\right\|_{\LHS(\cU, H)}^2\right)\mfd t\\
		&+ 3\bE\left(\left\|
		G'(Y_m^f)\left[(r(\mfd tA_{N_f}) - I) P_{N_f} G(Y_m^f)\mfd_m W_{K_f}\right]
		\right\|_{\LHS(\cU, H)}^2\right)\mfd t \\
		&+ 3\bE\left(\left\|
		(I-r(\mfd tA_{N_f})P_{N_f}) G'(Y_m^f)(P_{N_f} G(Y_m^f)\mfd_m W_{K_f})
		\right\|_{\LHS(\cU, H)}^2\right)\mfd t \\
		\le
		&C \bE\left(
		\|(r(\mfd tA_{N_f}) - I) Y_m^f\|_H^2
		\left\|
		\left(r(\mfd tA_{N_f})P_{N_f} G(Y_m^f)\mfd_m W_{K_f}\right)
		\right\|_H^2\right)\mfd t \\
		&+
		C\bE\left(\left\|
		(r(\mfd tA_{N_f}) - I) P_{N_f} G(Y_m^f)
		\right\|_H^2\right)\mfd t^2\\
		&+
		C\E{\norm{(r(\mfd tA_{N_f})P_{N_f}-I) G'(Y_m^f)}_{\cL(H, \LHS(\cU, H))}^2
			\norm{G(Y_m^f)\mfd_m W_{K_f}}_H^2 }\mfd t \\
		\le
		& C \E{\norm{(r(\mfd tA_{N_f}) - I)Y_m^f}_H^4}^{\nicefrac{1}{2}}
		\E{\norm{r(\mfd tA_{N_f})P_{N_f} G(Y_m^f)\mfd_m W_{K_f}}_H^4}^{\nicefrac{1}{2}}\mfd t\\
		&+
		C\bE\left(\left\|
		(r(\mfd tA_{N_f})P_{N_f} - I) G(Y_m^f)
		\right\|_{\LHS(\cU, H)}^2\right)\mfd t^2 \\
		&+
		C\bE\left(\left\|
		(r(\mfd tA_{N_f})P_{N_f}-I) G'(Y_m^f)
		\right\|_{\cL(H, \LHS(\cU, H))}^4\right)^{\nicefrac{1}{2}}
		\bE\left(\left\| G(Y_m^f)\mfd_m W_{K_f}\right\|_H^4\right)^{\nicefrac{1}{2}}\mfd t \\
		\le
		& C \E{\norm{(r(\mfd tA_{N_f}) - I)(\widetilde Y_m^f + Y_m^f-\widetilde Y_m^f)}_H^4}^{\nicefrac{1}{2}}
		\mfd t^2\\
		&+
		C\bE\left(\left\|
		(r(\mfd tA_{N_f})P_{N_f} - I) (G(\widetilde Y_m^f)+G(Y_m^f)-G(\widetilde Y_m^f))
		\right\|_{\LHS(\cU, H)}^2\right)\mfd t^2 \\
		&+
		C\bE\left(\left\|
		(r(\mfd tA_{N_f})P_{N_f}-I) (G'(\widetilde Y_m^f)+G'(Y_m^f)-G'(\widetilde Y_m^f))
		\right\|_{\cL(H, \LHS(\cU, H))}^4\right)^{\nicefrac{1}{2}}\mfd t^2.
	\end{align*}
	In the last step we have used that $G$ is of linear growth together with Theorem~\ref{thm:stability}.

	Assumption~\ref{ass:SPDE}~\ref{item:smooth-coefficients} together with Lemmas~\ref{lem:rational-approx} and~\ref{lem:semi-error} then yields
	\bee
	\begin{split}
		&\E{\norm{\widetilde{\tIII}_{m}^{f,1}}_H^2}\\
		&\le
		C \mfd t^2 \LR{
			\E{\norm{(r(\mfd tA_{N_f}) - I)\widetilde Y_m^f}_H^4}^{\nicefrac{1}{2}}
			+
			\E{\norm{Y_m^f - \widetilde Y_m^f}_H^4}^{\nicefrac{1}{2}} } \\
		&\quad+C \mfd t^2 \LR{
			\E{\norm{(r(\mfd tA_{N_f}) - I) G(\widetilde Y_m^f)}_{\LHS(\cU, H)}^2}
			+
			\E{\norm{G(Y_m^f)-G(\widetilde Y_m^f)}_{\LHS(\cU, H)}^2} }
		\\
		&\quad+C \mfd t^2 \LR{
			\E{\norm{(r(\mfd tA_{N_f}) - I) G'(\widetilde Y_m^f)}_{\cL(H, \LHS(\cU, H))}^4}^{\nicefrac{1}{2}}
			+
			\E{\norm{G'(Y_m^f)-G'(\widetilde Y_m^f)}_{\cL(H, \LHS(\cU, H))}^4}^{\nicefrac{1}{2}} } \\
		&\le C \mfd t^2 \left(\mfd t^\ga + N_f^{-2\widetilde \ga} \right)
		\E{\norm{\widetilde Y_m^f}_{\dot H^\ga}^4
			+\norm{G(\widetilde Y_m^f)}_{\LHS(\cU, \dot H^\ga)}^4
			+\norm{G'(\widetilde Y_m^f)}_{\cL(\dot H^\ga, \LHS(\cU, \dot H^\ga))}^4
			+\norm{ Y_m^f-\widetilde Y_m^f }_H^4
		}^{\nicefrac{1}{2}} \\
		&\le C \mfd t^2 \left(\mfd t^\ga + N_f^{-2\widetilde \ga} + K_f^{-4\gb}\right).
	\end{split}
	\eee
	Thus, we obtain for $\tII_{m}^{f,1}:\gO\to H$ that $\bE(\tII_{m}^{f,1}|\cF_{t_m})=0$ and $\bE(\|\tII_{m}^{f,1}\|_H^2)\le C \mfd t^2 \left(\mfd t^\ga + N_f^{-2\widetilde \ga} + K_f^{-4\gb}\right)$.

	For the remaining term $\tII_{m}^{f,2}$ in ~\eqref{eq:G_split}, we
	have that $\bE(\tII_{m}^{f,2}|\cF_{t_m})=0$.
	Morever, with the It\^o isometry and analogous calculations as in~\eqref{eq:F1-bound} and~\eqref{eq:F2-bound} we obtain the bound
	\begin{align*}
		\bE(\|\tII_{m}^{f,2}\|_H)
		&=
		\bE\left(\left\| \left[G(r(\mfd tA_{N_f})Y_m^f) - r(\mfd tA_{N_f})P_{N_f} G(Y_m^f)\right]\mfd_{m+1/2} W_{K_f} \right\|_H^2\right) \\
		&\le C \mfd t \left(\mfd t^\ga + N_f^{-2\widetilde \ga} + K_f^{-4\gb}\right).
	\end{align*}

	To bound the last term $\tI\tII_m^f$ in~\eqref{eq:error_split}, we first observe again that $\bE(\tI\tII_m^f|\cF_n)=0$.
	We further use the BDG inequality in Lemma~\ref{lem:ito-BDG} to obtain for some $\xi_m^{1,\tIII}, \xi_m^{2,\tIII}\in H$ that
	\begin{align*}
		\E{\norm{\tI\tII_m^f}_H^2}=
		&\E{\norm{\left[\cG(Y_{m+1/2}^f)-r(\mfd tA_{N_f})P_{N_f}\cG(Y_m^f)\right]\mfd_{m+1/2}\cW_{m,K_f}}_H^2} \\
		\le
		&C \E{\norm{\cG(Y_{m+1/2}^f)-r(\mfd tA_{N_f})P_{N_f}\cG(Y_m^f)}_{\LHS(\cL_1(\cU), H)}^2} \mfd t^2 \\
		\le
		& C\E{\norm{\cG(Y_{m+1/2}^f)-\cG(r(\mfd tA_{N_f})P_{N_f} Y_m^f)}_{\LHS(\cL_1(\cU), H)}^2} \mfd t^2 \\
		&+C\E{\norm{\cG(r(\mfd tA_{N_f})P_{N_f} Y_m^f)-\cG(Y_m^f)}_{\LHS(\cL_1(\cU), H)}^2} \mfd t^2 \\
		&+C\E{\norm{\cG(Y_m^f)-r(\mfd tA_{N_f})P_{N_f}\cG(Y_m^f)}_{\LHS(\cL_1(\cU), H)}^2} \mfd t^2 \\
		\le
		&C\E{\norm{\cG'(\xi_m^{1,\tIII})(Y_{m+1/2}^f-r(\mfd tA_{N_f})Y_m^f)}_{\LHS(\cL_1(\cU), H)}^2}\mfd t^2 \\
		&+ C\E{\norm{\cG'(\xi_m^{2,\tIII})(I-r(\mfd tA_{N_f}))Y_m^f}_{\LHS(\cL_1(\cU), H)}^2}\mfd t^2 \\
		&+ C\E{\norm{(I-r(\mfd tA_{N_f}))\cG(Y_m^f)}_{\LHS(\cL_1(\cU), H)}^2}\mfd t^2.
	\end{align*}
	Hölder's inequality, Proposition~\ref{prop:correction_est} and now show with similar calculations as for $\widetilde{\tIII}_{m}^{f,1}$ that
	\begin{align*}
		\E{\norm{\tI\tII_m^f}_H^2}
		\le &C\E{1+\norm{\xi_m^{1,\tIII}}_H^4}^{\nicefrac{1}{2}}
		\E{\norm{Y_{m+1/2}^f-r(\mfd tA_{N_f})Y_m^f}_H^4}^{\nicefrac{1}{2}} \mfd t^2 \\
		&+ C\E{1+\norm{\xi_m^{2,\tIII}}_H^4}^{\nicefrac{1}{2}}
		\E{\norm{(I-r(\mfd tA_{N_f}))(\widetilde Y_m^f + Y_m^f-\widetilde Y_m^f)}_H^4}^{\nicefrac{1}{2}} \mfd t^2 \\
		&+ C\E{\norm{(I-r(\mfd tA_{N_f}))
				\LR{\cG(\widetilde Y_m^f) + \cG(Y_m^f)-\cG(\widetilde Y_m^f)}
			}_{\LHS(\cL_1(\cU), H)}^2}\mfd t^2 \\
		\le& C \mfd t^2 \left(\mfd t^\ga + N_f^{-2\widetilde \ga} + K_f^{-4\gb} \right).
	\end{align*}
\end{proof}

A similar result to Proposition~\ref{prop:fine-error} also holds for the antithetic fine approximation $Y_\cdot^a$.

\begin{cor}\label{cor:anti-diff}
	Let Assumptions~\ref{ass:SPDE} and~\ref{ass:approximation} hold.
	Then, for any $m=0,\dots, M-1$ there holds that
	\begin{equation}
		\begin{split}\label{eq:anti_expansion}
			Y_{m+1}^a
			&=: r(\mfd tA_{N_f})^2P_{N_f}\LR{ Y_m^a
				+ F(Y_m^a)\gD t + G(Y_m^a)\gD_m W_{K_f} + \cG(Y_m^a)\gD_m\cW_{m,K_f} } \\
			&\quad+ r(\mfd tA_{N_f})^2P_{N_f}\cG(Y_m^a)
			\LR{\mfd_{m+1/2}W_{K_f}\otimes\mfd_mW_{K_f} - \mfd_m W_{K_f}\otimes\mfd_{m+1/2}W_{K_f}}\\
			&\quad+ \Xi_m^a + O_m^a,
		\end{split}
	\end{equation}
	where $\Xi_m^a, O_m^a:\gO\to H$ are random variables such that
	\begin{align*}
		&\E{\norm{\Xi_m^a}_H^2}\le C\mfd t^2
		\LR{M^{-\min(\ga, 2)} + N_f^{-2\widetilde \ga} + K_f^{-4\gb}},\\
		&\E{O_m^a\,\big|\cF_{t_m}}=0\quad\text{and}\quad
		\E{\norm{O_m^a}_H^2}\le C\mfd t
		\LR{M^{-\min(\ga, 2)} + N_f^{-2\widetilde \ga} + K_f^{-4\gb}}.
	\end{align*}
	The constant $C$ is independent of $\gd t=\nicefrac{T}{2M}, N_f$ and $K_f$.
\end{cor}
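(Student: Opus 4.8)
The plan is to mirror the proof of Proposition~\ref{prop:fine-error} line by line, exploiting that the antithetic scheme~\eqref{eq:anti1}--\eqref{eq:anti2} is obtained from the fine scheme~\eqref{eq:fine1}--\eqref{eq:fine2} by interchanging the two sub-step increments, i.e. by the formal swap $\mfd_m W_{K_f}\leftrightarrow\mfd_{m+1/2}W_{K_f}$ and $\mfd_m\cW_{m,K_f}\leftrightarrow\mfd_{m+1/2}\cW_{m,K_f}$. Composing the two half-steps~\eqref{eq:anti1} and~\eqref{eq:anti2} exactly as in the fine case gives
\[
Y_{m+1}^a = r(\mfd tA_{N_f})^2P_{N_f}\LR{ Y_m^a + F(Y_m^a)\gD t + G(Y_m^a)\mfd_{m+1/2}W_{K_f} + \cG(Y_m^a)\mfd_{m+1/2}\cW_{m,K_f}} + r(\mfd tA_{N_f})P_{N_f}\LR{F(Y_{m+1/2}^a)\gD t + G(Y_{m+1/2}^a)\mfd_m W_{K_f} + \cG(Y_{m+1/2}^a)\mfd_m\cW_{m,K_f}}.
\]
First I would apply the increment identity~\eqref{eq:tensor_increment}, now solved for the \emph{first} sub-step correction (which in the antithetic scheme carries the half-index), in the form
\[
\mfd_{m+1/2}\cW_{m,K_f} = \gD_m\cW_{m,K_f} - \mfd_m\cW_{m,K_f} - \mfd_{m+1/2}W_{K_f}\otimes\mfd_mW_{K_f} - \mfd_mW_{K_f}\otimes\mfd_{m+1/2}W_{K_f},
\]
and reassemble the diffusion increments via $G(Y_m^a)\gD_m W_{K_f}=G(Y_m^a)(\mfd_mW_{K_f}+\mfd_{m+1/2}W_{K_f})$, so that the leading term $r(\mfd tA_{N_f})^2P_{N_f}\LR{Y_m^a+F(Y_m^a)\gD t+G(Y_m^a)\gD_m W_{K_f}+\cG(Y_m^a)\gD_m\cW_{m,K_f}}$ of~\eqref{eq:anti_expansion} is isolated.

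The crucial and only sign-sensitive point is the pair of cross terms. Writing the negative of their sum as an antisymmetric combination plus a remainder,
\[
-\LR{\mfd_{m+1/2}W_{K_f}\otimes\mfd_mW_{K_f}+\mfd_mW_{K_f}\otimes\mfd_{m+1/2}W_{K_f}} = +\LR{\mfd_{m+1/2}W_{K_f}\otimes\mfd_mW_{K_f}-\mfd_mW_{K_f}\otimes\mfd_{m+1/2}W_{K_f}} - 2\,\mfd_{m+1/2}W_{K_f}\otimes\mfd_mW_{K_f},
\]
produces exactly the $+$ sign in front of the antisymmetric tensor in~\eqref{eq:anti_expansion}, in contrast with the $-$ sign of~\eqref{eq:fine_expansion}. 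This reversal is structural rather than accidental: in the fine case one substitutes $\mfd_m\cW_{m,K_f}$ and peels off $-2\,\mfd_mW_{K_f}\otimes\mfd_{m+1/2}W_{K_f}$, whereas here one substitutes $\mfd_{m+1/2}\cW_{m,K_f}$ and peels off the \emph{other} cross term $-2\,\mfd_{m+1/2}W_{K_f}\otimes\mfd_mW_{K_f}$, which flips the residual antisymmetric part. The peeled-off contribution is absorbed, as before, into the second-order Milstein remainder to render it conditionally centred.

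It remains to treat the three discrepancy terms $\tI_m^a$, $\tII_m^a$ and $\tI\tII_m^a$ --- the drift, diffusion and correction analogues of $\tI_m^f$, $\tII_m^f$, $\tI\tII_m^f$, now carrying $\mfd_m W_{K_f}$ where the fine case carried $\mfd_{m+1/2}W_{K_f}$, together with the absorbed cross term $-2r(\mfd tA_{N_f})^2P_{N_f}\cG(Y_m^a)\mfd_{m+1/2}W_{K_f}\otimes\mfd_mW_{K_f}$. These are estimated verbatim as in Proposition~\ref{prop:fine-error}: splitting each discrepancy via first- and second-order Taylor expansions of $F$, $G$ and $\cG$ around $Y_m^a$ (respectively the semi-discrete $\widetilde Y_m^a$) into a conditionally centred part $O_m^a$ and a conditional-mean-controlled part $\Xi_m^a$, and invoking the stability and moment bounds of Theorem~\ref{thm:stability}, the semi-discrete bounds of Lemma~\ref{lem:semi-error}, the rational-approximation estimates of Lemma~\ref{lem:rational-approx}, the linear growth of $\cG$ and $\cG'$ from Proposition~\ref{prop:correction_est}, and the It\^o isometry and BDG inequalities of Lemma~\ref{lem:ito-BDG} and Corollary~\ref{cor:ito-BDG}. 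All of these hold for $Y^a_\cdot$ and $\widetilde Y^a_\cdot$, since Theorem~\ref{thm:stability} and Lemma~\ref{lem:semi-error} are already stated symmetrically for the antithetic process and none of the estimates depends on the labelling of the two sub-intervals. This yields
\[
\E{\norm{\Xi_m^a}_H^2}\le C\mfd t^2\LR{M^{-\min(\ga,2)}+N_f^{-2\widetilde\ga}+K_f^{-4\gb}},\quad \E{O_m^a\,\big|\cF_{t_m}}=0,\quad \E{\norm{O_m^a}_H^2}\le C\mfd t\LR{M^{-\min(\ga,2)}+N_f^{-2\widetilde\ga}+K_f^{-4\gb}}.
\]

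I expect the main obstacle to be almost entirely bookkeeping: tracking the increment swap through the composition of the two half-steps so that the antisymmetric cross term acquires the correct $+$ sign, and checking that the conditional centring of $O_m^a$ survives the fact that in the antithetic scheme the \emph{second} sub-step is driven by the \emph{earlier} increment $\mfd_m W_{K_f}$. The latter is not a genuine difficulty: conditionally on $\cF_{t_m}$ the two increments $\mfd_m W_{K_f}$ and $\mfd_{m+1/2}W_{K_f}$ are independent, mean-zero and identically distributed, so the product and moment computations --- which never use the temporal ordering within a step, only this independence --- go through unchanged, and the $\cF_{t_m}$-conditional mean of every odd (linear or antisymmetric-bilinear) term vanishes as required.
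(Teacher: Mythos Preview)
Your proposal is correct and follows essentially the same approach as the paper: compose the two antithetic half-steps, substitute the tensor-increment identity~\eqref{eq:tensor_increment} for the first-step correction term, isolate the leading contribution and the sign-flipped antisymmetric tensor, and then bound the remainder terms $\tI_m^a,\tII_m^a,\tI\tII_m^a$ exactly as in Proposition~\ref{prop:fine-error}. The paper's proof is in fact much terser than yours --- it simply records the expansion with the sign change, lists the three remainder terms (with $\tII_m^a$ carrying $-2r(\mfd tA_{N_f})P_{N_f}\cG(Y_m^a)\mfd_{m+1/2}W_{K_f}\otimes\mfd_mW_{K_f}$ exactly as you derive), and states that the claim follows analogously --- so your additional bookkeeping on the sign flip and the conditional-centring check is more explicit than what the paper provides, but entirely aligned with it.
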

\begin{proof}
	Equations~\eqref{eq:anti1} and~\eqref{eq:anti2} show that
	\begin{align*}
		Y_{m+1}^a
		&=: r(\mfd tA_{N_f})^2P_{N_f}\LR{ Y_m^a
			+ F(Y_m^a)\gD t + G(Y_m^a)\gD_m W_{K_f} + \cG(Y_m^a)\gD_m\cW_{n,K_f   } } \\
		&\quad+ r(\mfd tA_{N_f})^2P_{N_f}\cG(Y_m^a)
		\LR{\mfd_{m+1/2}W_{K_f}\otimes\mfd_mW_{K_f} - \mfd_m W_{K_f}\otimes\mfd_{m+1/2}W_{K_f}}\\
		&\quad+r(\mfd tA_{N_f})P_{N_f}\left[I_m^a + \tII_m^a + \tI\tII_m^a\right],
	\end{align*}
	where sign change in the third line is due to the swapping of the increments $\mfd_{m+1/2}W_{K_f}$ and $\gd_{n}W_{K_f}$ in the antithetic estimator. The remainder terms are given by
	\begin{align*}
		I_m^a&:= [F(Y_{m+1/2}^a) - r(\mfd tA_{N_f})P_{N_f} F(Y_m^a)]\mfd t, \\
		\tII_m^a&:=\left[G(Y_{m+1/2}^a) - r(\mfd tA_{N_f})P_{N_f} G(Y_m^a)\right]\mfd_m W_{K_f}
		-2r(\mfd tA_{N_f})P_{N_f}\cG(Y_m^a)\mfd_{m+1/2} W_{K_f}\otimes\mfd_mW_{K_f},\\
		\tI\tII_m^a&:=\left[\cG(Y_{m+1/2}^f)-r(\mfd tA_{N_f})P_{N_f}\cG(Y_m^a)\right]\mfd_m\cW_{m,K_f}.
	\end{align*}
	The claim follows analogously to Proposition~\ref{prop:fine-error}.
\end{proof}

Proposition~\ref{prop:fine-error} and Corollary~\ref{cor:anti-diff} are now combined to derive a similar difference equation for the (antithetic) average $\ol Y_m:=\nicefrac{1}{2}(Y_m^f+Y_m^a)$ for $m=0, \dots, M$.

\begin{prop}\label{prop:avg-diff}
	Let Assumptions~\ref{ass:SPDE} and~\ref{ass:approximation}.
	Then, for any $m=0,\dots, M-1$ there holds that
	\begin{align*}
		\ol Y_{m+1}
		&= r(\mfd tA_{N_f})^2P_{N_f}\LR{\ol  Y_m
			+ F(\ol  Y_m)\gD t + G(\ol  Y_m)\gD_m W_{K_f} + \cG(\ol  Y_m)\gD_m\cW_{m,K_f} }
		+ \ol \Xi_m + \ol O_m,
	\end{align*}
	where $\ol \Xi_m, \ol O_m:\gO\to H$ are random variables such that
	\begin{align*}
		&\E{\norm{\ol \Xi_m}_H^2}\le C\gD t^2
		\LR{M^{-\min(\ga, 2)} + N_f^{-2\widetilde \ga} + K_f^{-4\gb}},\\
		&\E{\ol O_m\,\big|\cF_{t_m}}=0\quad\text{and}\quad
		\E{\norm{\ol O_m}_H^2}\le C\gD  t
		\LR{M^{-\min(\ga, 2)} + N_f^{-2\widetilde \ga} + K_f^{-4\gb}}.
	\end{align*}
	The constant $C$ is independent of $\gD t=\nicefrac{T}{M}, N_f$ and $K_f$.
\end{prop}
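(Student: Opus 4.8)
The plan is to add the expansions of $Y_{m+1}^f$ and $Y_{m+1}^a$ from Proposition~\ref{prop:fine-error} and Corollary~\ref{cor:anti-diff} and divide by two, and then show that every discrepancy between the resulting average and the claimed right-hand side fits the budget of $\ol\Xi_m$ or $\ol O_m$. The first observation is that the two antithetic ``Lévy-type'' terms in \eqref{eq:fine_expansion} and \eqref{eq:anti_expansion} enter with opposite signs and collapse to
\[
\tfrac12\, r(\mfd tA_{N_f})^2P_{N_f}\LR{\cG(Y_m^a)-\cG(Y_m^f)}\LR{\mfd_{m+1/2}W_{K_f}\otimes\mfd_mW_{K_f}-\mfd_mW_{K_f}\otimes\mfd_{m+1/2}W_{K_f}},
\]
which is exactly the cancellation that drives the antithetic scheme. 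Using the linearity of $r(\mfd tA_{N_f})^2P_{N_f}$, the ``nice'' parts then produce $r(\mfd tA_{N_f})^2P_{N_f}$ applied to $\ol Y_m+\tfrac12(F(Y_m^f)+F(Y_m^a))\gD t+\tfrac12(G(Y_m^f)+G(Y_m^a))\gD_mW_{K_f}+\tfrac12(\cG(Y_m^f)+\cG(Y_m^a))\gD_m\cW_{m,K_f}$, together with the averaged remainders $\tfrac12(\Xi_m^f+\Xi_m^a)$ and $\tfrac12(O_m^f+O_m^a)$.

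The central estimate I would establish first is the antithetic stability bound
\[
\E{\norm{Y_m^f-Y_m^a}_H^p}\le C\,\mfd t^{p/2},\qquad p\in(0,8],
\]
\emph{without} any dependence on $N_f$ or $K_f$. This is the step I expect to be the main obstacle, because the absence of $N_f$ and $K_f$ terms is precisely what makes the cross term and the Taylor discrepancies fit the $\ol O_m$ and $\ol\Xi_m$ budgets. It holds because $Y_\cdot^f$ and $Y_\cdot^a$ share all discretization parameters $\mfd t,N_f,K_f$ and differ only through the ordering of the sub-increments $(\mfd_mW_{K_f},\mfd_m\cW_{m,K_f})$ versus $(\mfd_{m+1/2}W_{K_f},\mfd_{m+1/2}\cW_{m,K_f})$; writing the recursion for $D_m:=Y_m^f-Y_m^a$ from \eqref{eq:fine1}--\eqref{eq:anti2}, the projection and truncation errors cancel and the driving terms are increment differences of size $\cO(\mfd t^{1/2})$, so the bound follows by the standard machinery (Corollary~\ref{cor:ito-BDG} with \eqref{eq:W_bracket}, the growth bounds of Proposition~\ref{prop:correction_est}, the stability $\norm{r(\mfd tA_{N_f})P_{N_f}}_{\cL(H)}\le1$ from Lemma~\ref{lem:rational-approx}, the uniform moments of Theorem~\ref{thm:stability}, and the discrete Grönwall inequality).

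With this bound in hand, the function-average discrepancies are handled by Taylor expansion about $\ol Y_m$. For $F$ one has $\tfrac12(F(Y_m^f)+F(Y_m^a))-F(\ol Y_m)=\tfrac14\LR{F''(\xi_1)+F''(\xi_2)}(\tfrac{D_m}{2},\tfrac{D_m}{2})$ for some $\xi_1,\xi_2\in H$, which is $\cF_{t_m}$-measurable and, by Assumption~\ref{ass:SPDE}~\ref{item:FG-diff} and the antithetic bound, has $L^2(\gO;H)$-norm $\cO(\mfd t)$; multiplied by $\gD t$ it contributes $\cO(\mfd t^2)$ to $\ol\Xi_m$. The corresponding $G$- and $\cG$-discrepancies carry the conditionally centered factors $\gD_mW_{K_f}$ and $\gD_m\cW_{m,K_f}$, hence have vanishing conditional mean; by the Itô isometry, and by \eqref{eq:BDG2} together with \eqref{eq:W_bracket}, they contribute to $\ol O_m$ with $L^2$-norms $\cO(\mfd t^{3/2})$ and $\cO(\mfd t^{2})$, respectively. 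Finally, the collapsed cross term is $\cF_{t_m}$-measurable in its first factor and conditionally centered in the increment product, so it belongs to $\ol O_m$; conditioning on $\cF_{t_m}$, using independence of the future increments together with $\E{\norm{\cG(Y_m^a)-\cG(Y_m^f)}^2}\le C\,\E{\norm{D_m}^4}^{1/2}\le C\mfd t$ (Proposition~\ref{prop:correction_est}, Hölder, and the antithetic bound), its $L^2$-norm is $\cO(\mfd t^{3/2})$.

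It then remains to collect the pieces: the $\cF_{t_m}$-measurable, deterministically scaled discrepancies plus $\tfrac12(\Xi_m^f+\Xi_m^a)$ go into $\ol\Xi_m$, while the conditionally centered discrepancies, the cross term, and $\tfrac12(O_m^f+O_m^a)$ go into $\ol O_m$; the latter has vanishing conditional mean since each constituent does and $r(\mfd tA_{N_f})^2P_{N_f}$ is deterministic. Recalling $\mfd t=\gD t/2$, that $\norm{r(\mfd tA_{N_f})^2P_{N_f}}_{\cL(H)}\le1$ preserves every $L^2$ bound, and that $\mfd t^2\le M^{-\min(\ga,2)}$, the averaged remainders from Proposition~\ref{prop:fine-error} and Corollary~\ref{cor:anti-diff} already supply the terms $N_f^{-2\widetilde\ga}$ and $K_f^{-4\gb}$, and the new contributions are dominated by them. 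This yields $\E{\norm{\ol\Xi_m}_H^2}\le C\gD t^2(M^{-\min(\ga,2)}+N_f^{-2\widetilde\ga}+K_f^{-4\gb})$ and $\E{\norm{\ol O_m}_H^2}\le C\gD t(M^{-\min(\ga,2)}+N_f^{-2\widetilde\ga}+K_f^{-4\gb})$, as claimed.
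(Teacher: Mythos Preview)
Your overall architecture is correct and coincides with the paper's: average the expansions from Proposition~\ref{prop:fine-error} and Corollary~\ref{cor:anti-diff}, observe that the L\'evy-type terms collapse to a term involving $\cG(Y_m^a)-\cG(Y_m^f)$, and treat the function-average discrepancies $\tfrac12(\Phi(Y_m^f)+\Phi(Y_m^a))-\Phi(\ol Y_m)$ for $\Phi\in\{F,G,\cG\}$ by Taylor expansion. The remainder structure and the split into $\ol\Xi_m$ and $\ol O_m$ are exactly as in the paper.

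The gap is your ``antithetic stability bound'' $\E{\norm{Y_m^f-Y_m^a}_H^p}\le C\,\mfd t^{p/2}$ with no $N_f,K_f$ dependence. This is not true in general, and your justification that ``the projection and truncation errors cancel'' is incorrect. Already in the additive-noise toy case $F=0$, $G\equiv g$ constant (so $\cG=0$), one computes
\[
D_{m+1}=r(\mfd tA_{N_f})^2P_{N_f}D_m+r(\mfd tA_{N_f})P_{N_f}\bigl(r(\mfd tA_{N_f})P_{N_f}-I\bigr)g\,(\mfd_mW_{K_f}-\mfd_{m+1/2}W_{K_f}),
\]
and $(r(\mfd tA_{N_f})P_{N_f}-I)g$ contains the projection error $(P_{N_f}-I)g$, giving $\E{\norm{D_M}_H^2}\le C(\mfd t^{\min(\ga,2)}+N_f^{-2\widetilde\ga})$ by Lemma~\ref{lem:rational-approx}; the $N_f^{-2\widetilde\ga}$ does not disappear. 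The paper avoids this by a completely different, much shorter route: it triangulates through the exact solution and invokes Corollary~\ref{cor:strong-with-milstein} (i.e.\ Assumption~\ref{ass:approximation}\ref{item:strong}) to get
\[
\E{\norm{Y_m^f-Y_m^a}_H^4}\le C\bigl(\mfd t^2+N_f^{-4\widetilde\ga}+K_f^{-4\gb}\bigr),
\]
which is weaker than what you claim but is precisely the shape needed, since $N_f^{-4\widetilde\ga}$ and $K_f^{-4\gb}$ already sit inside the stated bounds for $\ol\Xi_m$ and $\ol O_m$.

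A second, smaller issue: for the $\cG$-discrepancy you implicitly invoke a second-order Taylor expansion (to get $L^2$-norm $\cO(\mfd t^2)$), but Proposition~\ref{prop:correction_est} only provides $\cG'$, with linear growth, and no $\cG''$. The paper uses a first-order expansion at $\ol Y_m$ together with the antisymmetry $Y_m^f-\ol Y_m=-(Y_m^a-\ol Y_m)$ to write $\tfrac12(\cG(Y_m^f)+\cG(Y_m^a))-\cG(\ol Y_m)=\tfrac14(\cG'(\widetilde\xi^f)-\cG'(\widetilde\xi^a))(Y_m^f-Y_m^a)$, then H\"older and the linear growth of $\cG'$; combined with the $\gD t^2$ from $\gD_m\cW_{m,K_f}$ and a final application of Young's inequality this lands in the $\ol O_m$ budget. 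With these two fixes your argument goes through and matches the paper's.
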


\begin{proof}
	Lemma~\ref{prop:fine-error} and Corollary~\ref{cor:anti-diff} show that
	\begin{align*}
		\ol Y_{m+1}
		&= r(\mfd tA_{N_f})^2P_{N_f}\LR{\ol  Y_m
			+ F(\ol  Y_m)\gD t + G(\ol  Y_m)\gD_m W_{K_f} + \cG(\ol  Y_m)\gD_m\cW_{m,K_f} }  \\
		&\quad+ r(\mfd tA_{N_f})^2P_{N_f}\left(\frac{F(Y_m^f)+F(Y_m^a)}{2}-F(\ol Y_m)\right)\gD t\\
		&\quad+ r(\mfd tA_{N_f})^2P_{N_f}\left(\frac{G(Y_m^f)+G(Y_m^a)}{2}-G(\ol Y_m)\right)\gD_m W_{K_f} \\
		&\quad+ r(\mfd tA_{N_f})^2P_{N_f}\left(\frac{\cG(Y_m^f)+\cG(Y_m^a)}{2}-\cG(\ol Y_m)\right)\gD_m\cW_{m,K_f} \\
		&\quad+ \frac{r(\mfd tA_{N_f})^2P_{N_f}}{2}\LR{\cG(Y_m^a)-\cG(Y_m^f)}
		\LR{\mfd_{m+1/2}W_{K_f}\otimes\mfd_mW_{K_f} - \mfd_m W_{K_f}\otimes\mfd_{m+1/2}W_{K_f}}\\
		&\quad+\frac{1}{2}\left(\Xi_m^f + \Xi_m^a+O_m^f + O_m^a\right) \\
		&=: r(\mfd tA_{N_f})^2P_{N_f}\LR{\ol  Y_m
			+ F(\ol  Y_m)\gD t + G(\ol  Y_m)\gD_m W_{K_f} + \cG(\ol  Y_m)\gD_m\cW_{m,K_f} } \\
		&\quad+ \ol{\tI}_m + \ol{\tII}_m +\ol{\tIII}_m +\ol{\tIV}_m
		+\frac{1}{2}\left(\Xi_m^f + \Xi_m^a+O_m^f + O_m^a\right)
	\end{align*}

	To bound the first term $\ol\tI_m$, we use a second order Taylor expansion of $F$ around $\ol Y_m$ together with $\ol Y_m=\frac{1}{2}(Y_m^f+Y_m^a)$ to obtain for some $\xi_m^f, \xi_m^a\in H$
	\begin{align*}
		\frac{F(Y_m^f)+F(Y_m^a)}{2}-F(\ol Y_m)
		&=
		\frac{F''(\xi_m^f)-F''(\xi_m^a)}{4}(Y_m^f-\ol Y_m, Y_m^f-\ol Y_m) \\
		&=
		\frac{F''(\xi_m^f)-F''(\xi_m^a)}{16}(Y_m^f-Y_m^a, Y_m^f-Y_m^a).
	\end{align*}
	Assumption~\ref{ass:approximation}~\ref{item:strong} and the triangle inequality further show that
	\begin{align*}
		\E{\norm{Y_m^f-Y_m^a}_H^4}
		\le
		C\E{\norm{Y_m^f-X(t_m)}_H^4}
		\le
		C\left(\mfd t^2 + N_f^{-4\widetilde\ga} +  K_f^{-4\gb} \right).
	\end{align*}
	We then use the global bound on $F''$ and Jensen's inequality together with Assumption~\ref{ass:approximation}~\ref{item:strong} to derive
	\begin{align*}
		\E{\norm{\ol{\tI}_m}_H^2}
		&\le
		\E{\norm{\frac{F(Y_m^f)+F(Y_m^a)}{2}-F(\ol Y_m)}_H^2}
		\gD t^2 \\
		&\le
		C\E{\norm{Y_m^f-X(m\gD t)+X(m\gD t)-Y_m^a}_H^4}\gD t^2 \\
		&\le
		C\gD t^2 \left(\gD t^2 + N_f^{-4\widetilde\ga} + K_f^{-4\gb} \right).
	\end{align*}

	We observe that $\bE(\ol{\tII}_m\,|\,\cF_{t_m})=0$ holds for the second term, and arrive with It\^o's isometry and similar calculations as for $\ol{\tI}_m$ at
	\begin{align*}
		\E{\norm{\ol{\tII}_m}_H^2}
		\le
		C\gD t \left(\gD t^2 + N_f^{-4\widetilde\ga} + K_f^{-4\gb}\right).
	\end{align*}

	For the next, we first note that $\bE(\ol{\tIII}_m|\,\cF_{t_m})=0$.
	To bound $\ol{\tIII}_m$ in mean-square, we use Proposition~\ref{prop:correction_est} and first order expansion of $\cG$ to obtain for some $\widetilde \xi_m^f, \widetilde \xi_m^a\in H$
	\begin{align*}
		\frac{\cG(Y_m^f)+\cG(Y_m^a)}{2}-\cG(\ol Y_m)
		=
		\frac{\cG'(\widetilde \xi_m^f)-\cG'(\widetilde \xi_m^a)}{2}(Y_m^f-\ol Y_m)
		=
		\frac{\cG'(\widetilde \xi_m^f)-\cG'(\widetilde \xi_m^a)}{4}(Y_m^f-Y_m^a).
	\end{align*}

	Since the intermediate points $\widetilde \xi_m^f, \widetilde \xi_m^a\in H$ are convex combinations of $Y_m^f$ and $Y_m^a$ there holds by Lemma~\ref{lem:ito-BDG}, Proposition~\ref{prop:correction_est} and Theorem~\ref{thm:stability}
	\begin{align*}
		\E{\norm{\ol{\tIII}_m}_H^2}
		&\le C
		\E{\norm{(\cG'(\widetilde \xi_m^f)-\cG'(\widetilde \xi_m^a))(Y_m^f-Y_m^a)}_{\LHS(\cU, H)}^2} \gD t^2 \\
		&\le C\E{\LR{1+\norm{Y_m^f}_H^2+\norm{Y_m^a}_H^2}\norm{Y_m^f-Y_m^a}_H^2} \gD t^2 \\
		&\le C\E{\norm{Y_m^f-Y_m^a}_H^4}^{\nicefrac{1}{2}} \gD t^2\\
		&\le C\gD t^2 \left(\gD t + N_f^{-2\widetilde \ga} + K_f^{-2\gb}\right) \\
		&\le C\gD t\left(\gD t^2 + \gD tN_f^{-2\widetilde \ga} + \gD tK_f^{-2\gb}\right)\\
		&\le C\gD t\left(\gD t^2 + N_f^{-4\widetilde \ga} + K_f^{-4\gb}\right),
	\end{align*}
	where we have used Young's inequality for the final estimate.

	As $\cG$ is of linear growth by Proposition~\ref{prop:correction_est}, we obtain analogously that $\bE(\ol{\tIV}_m|\,\cF_{t_m})=0$ and
	\begin{align*}
		\E{\norm{\ol{\tIV}_m}_H^2}
		\le C\gD t\left(\gD t^2 + N_f^{-4\widetilde \ga} + K_f^{-4\gb}\right).
	\end{align*}
	The dominating remainder terms in the expansion of $\ol Y_{m+1}$ are thus $\Xi_m^f, \Xi_m^a, O_m^f, O_m^a$, and the claim follows from Proposition~\ref{prop:fine-error} and Corollary~\ref{cor:anti-diff}.
\end{proof}

We are now ready to proof our main result.
\begin{proof}[Proof of Theorem~\ref{thm:MainRes}]
	Define $e_{m+1}:=\ol Y_{m+1}-Y_{m+1}^c$ for any $m=0,\dots, M-1$ and assume again without loss of generality that $\ga\in[1,2]$.
	By Proposition~\ref{prop:avg-diff} it holds that
	\begin{equation}\label{eq:ml-diff}
		\begin{split}
			e_{m+1}
			&=  r(\mfd tA_{N_f})^2 \ol Y_m - r(\gD tA_N)Y_m^c\\
			&\quad+\left(r(\mfd tA_{N_f})^2P_{N_f} F(\ol Y_m)
			-r(\gD tA_N)P_NF(Y_m^c)\right)\gD t \\
			&\quad+\left(r(\mfd tA_{N_f})^2P_{N_f} G(\ol Y_m)
			-r(\gD tA_N)P_NG(Y_m^c)\right)\gD_mW_{K_f} \\
			&\quad+\left(r(\mfd tA_{N_f})^2P_{N_f}\cG(\ol Y_m)
			-r(\gD tA_N)P_N\cG(Y_m^c)\right)\gD_m\cW_{m,K_f} \\
			&\quad+r(\gD tA_N)P_NG(Y_m^c)(\gD_m W_{K_f}-\gD_mW_{M} ) \\
			&\quad+ r(\gD tA_N)P_N\cG(Y_m^c)(\gD_m\cW_{m,K_f}-\gD_m\cW_{m,K}) \\
			&\quad+ \ol \Xi_m + \ol O_m.
		\end{split}
	\end{equation}

	We now re-iterate the representation of $\ol Y_m$ and $Y_m^c$ to obtain
	\begin{equation}\label{eq:ml-diff2}
		\begin{split}
			e_{m+1}
			&=r(\mfd tA_{N_f})^{2m} \ol Y_0 - r(\gD tA_N)^mY_0^c \\
			&\quad +\sum_{j=0}^m \left((\mfd tA_{N_f})^{2(m-j+1)}P_{N_f} F(\ol Y_j) - r(\gD tA_N)^{m-j+1}P_NF(Y_j^c)\right)\gD t \\
			&\quad+\sum_{j=0}^m \left(r(\mfd tA_{N_f})^{2(m-j+1)}P_{N_f} G(\ol Y_j)
			-r(\gD tA_N)^{m-j+1}P_NG(Y_j^c)\right)\gD_jW_{K_f} \\
			&\quad+ \sum_{j=0}^m\left(r(\mfd tA_{N_f})^{2(m-j+1)}P_{N_f}\cG(\ol Y_j)
			-r(\gD tA_N)^{m-j+1}P_N\cG(Y_j^c)\right)\gD_j\cW_{m,K_f} \\
			&\quad+\sum_{j=0}^mr(\gD tA_N)^{m-j+1}P_NG(Y_j^c)(\gD_j W_{K_f}-\gD_jW_{M} ) \\
			&\quad+ \sum_{j=0}^mr(\mfd tA_N)^{m-j+1}P_N\cG(Y_j^c)(\gD_j\cW_{m,K_f}-\gD_j\cW_{m,K}) \\
			&\quad+ \sum_{j=0}^mr(\mfd tA_{N_f})^{2(n-j)}(\ol \Xi_j + \ol O_j)\\
			&=:\tI + \sum_{j=0}^m \tII_j+\tI\tII_j+\tIV_j+\tV_j+ \tVI_j+r(\mfd tA_{N_f})^{2(j-1)}(\ol \Xi_j + \ol O_j).
		\end{split}
	\end{equation}

	The first term $\tI$ is bounded Lemma~\ref{lem:rational-approx} and
        Assumption~\ref{ass:approximation}~\ref{item:subspace_approx} since
        $X_0\in L^8(\gO, \dot H^\ga)$ by
        \begin{align*}
		\E{\norm{\tI}_H^2}&\le
		3\E{\norm{(r(\mfd tA_{N_f})^{2m} - r(\gD tA_{N_f})^m)\ol Y_0}_H^2} +
		3\E{\norm{r(\gD tA_{N_f})^m-r(\gD tA_N)^m)\ol Y_0}_H^2} \\
		&\quad +
		3\E{\norm{r(\gD tA_N)^m(\ol Y_0-Y_0^c)}_H^2} \\
		&\le C\E{\norm{(r(\mfd tA_{N_f})^{2m} - r(\gD tA_{N_f})^m)P_{N_f} X_0}_H^2}\\
		&\quad +C \E{\norm{(r(\gD tA_{N_f})^m-r(\gD tA_N)^m)P_{N_f} X_0}_H^2} \\
		&\quad + C\LR{ \E{\norm{P_{N_f} X_0-X_0}_H^2}+\E{\norm{X_0-P_N X_0}_H^2}} \\
		&\le C\LR{M^{-\ga} + N^{-2\widetilde \ga}}.
	\end{align*}

	To bound the terms $\tII_j$, we define the semi-discrete averages $\widetilde Y_j:=\frac{1}{2}\LR{\widetilde Y^f_j + \widetilde Y^a_j}$ for $j=0,\dots, m$. We then obtain for any $j=0,\dots, n$ by a first order Taylor expansion for some $\widetilde\xi_j, \xi_j^c\in H$ that
	\begin{align*}
		\tII_j &=
		\left(r(\mfd tA_{N_f})^{2(m-j+1)}P_{N_f}-r(\gD tA_{N_f})^{m-j+1}P_{N_f}\right)
		\LR{F(\ol Y_j)-F(\widetilde Y_j)+F(\widetilde Y_j)}\gD t \\
		&\quad+\LR{r(\gD tA_{N_f})^{m-j+1}P_{N_f}-r(\gD tA_N)^{m-j+1}P_N}
		\LR{F(\ol Y_j)-F(\widetilde Y_j)+F(\widetilde Y_j)}\gD t \\
		&\quad+r(\gD tA_N)^{m-j+1}P_N\LR{F(\ol Y_m)-F(Y_m^c)}\gD t \\
		&=\left(r(\mfd tA_{N_f})^{2(m-j+1)}P_{N_f}-r(\gD tA_{N_f})^{m-j+1}P_{N_f}\right)
		\LR{F'(\widetilde\xi_j)(\ol Y_j-\widetilde Y_j)+F(\widetilde Y_j)}\gD t \\
		&\quad+\LR{r(\gD tA_{N_f})^{m-j+1}P_{N_f}-r(\gD tA_N)^{m-j+1}P_N}
		\LR{F'(\widetilde\xi_j)(\ol Y_j-\widetilde Y_j)+F(\widetilde Y_j)}\gD t \\
		&\quad+r(\gD tA_N)^{m-j+1}P_N\LR{F'(\xi_j^c)(\ol Y_j - Y_j^c)}\gD t.
	\end{align*}
	Lemmas~\ref{lem:rational-approx} and~\ref{lem:semi-error} then show together with Assumption~\ref{ass:approximation}~\ref{item:smooth-coefficients} that
	\begin{align*}
		\E{\norm{\tII_j}_H^2}
		&\le C \gD t^2\E{\norm{\ol Y_j-\widetilde Y_j}_H^2}\\
		&\quad+C \gD t^2\E{\norm{\left(r(\mfd tA_{N_f})^{2(m-j+1)}P_{N_f}-r(\gD tA_{N_f})^{m-j+1}P_{N_f}\right)F(\widetilde Y_j)}_H^2} \\
		&\quad+
		C \gD t^2\E{\norm{\left(r(\gD tA_{N_f})^{m-j+1}P_{N_f}-S_{N_f}((m-j+1)\gD t)\right)F(\widetilde Y_j)}_H^2} \\
		&\quad+
		C \gD t^2\E{\norm{\left(S_{N_f}((m-j+1)\gD t)-r(\gD tA_N)^{m-j+1}P_N\right)F(\widetilde Y_j)}_H^2
			+\norm{e_j}_H^2 } \\
		&\le C \gD t^2\E{\gD t^2 + N_f^{-2\widetilde \ga} + K_f^{-4\gb} + (\gD t^{\ga}+N_f^{-2\widetilde \ga})\|F(\widetilde Y_j))\|_{\dot H^\ga} +  \|e_j\|_H^2}\\
		&\le C \gD t^2\left(M^{-\ga}+N_f^{-2\widetilde \ga} + K_f^{-4\gb} + \E{\norm{e_j}_H^2}\right).
	\end{align*}

	By Lemma~\ref{lem:ito-BDG} and with similar calculations as for $\tII_m^e$ we further obtain
	\begin{align*}
		\E{\norm{\tI\tII_j}_H^2} + \E{\norm{\tIV_j}_H^2}
		\le C \gD t\left(M^{-\ga}+N_f^{-2\widetilde \ga} + K_f^{-4\gb} + \E{\norm{e_j}_H^2}\right).
	\end{align*}

	The fifth term $\tV_j$ is bounded by Corollary~\ref{cor:ito-BDG} and Theorem~\ref{thm:stability} via
	\begin{equation*}
		\E{\norm{\tV_j}_H^2}
		= \gD t \E{\norm{G(Y_j^c)}_{\LHS(\cU, H)}^2}\sum_{j=K+1}^{K_f}
		\eta_j
		\le C \gD t \E{1+\norm{Y_j^c}_H^2}\sum_{j=K+1}^{K_f}
		\eta_j
		\le C\gD tK^{-2\gb},
	\end{equation*}
	where we have used that $\eta_j=\cO(j^{-(1+\eps)-2\gb})$ for arbitrary
        small $\eps>0$ in the last step, cf.
        Assumption~\ref{ass:approximation} and the subsequent note.
	Similarly, Lemma~\ref{lem:ito-BDG}, Proposition~\ref{prop:correction_est} and Theorem~\ref{thm:stability} show that
	\begin{equation*}
		\E{\norm{\tVI_j}_H^2}
		\le C \gD t^2 \E{\norm{\cG(Y_j^c)}_{\LHS(\cL_1(\cU), H)}^2}\sum_{j=K+1}^{K_f}
		\eta_j^2
		\le C\gD t^2K^{-4\gb}.
	\end{equation*}

	Now we finally observe that $\E{Z\,|\cF_j}=0$ for $Z\in\{\tI\tII_j,\dots,\tVI_j\}$ and every $j=0,\dots,n$, and thus obtain with the estimates on $\tI, \tII_j,\dots, \tVI_j$ that
	\begin{align*}
		\E{\norm{e_{m+1}}_H^2}
		&\le C\E{\norm{\tI}_H^2}
		+ Cm \left(\sum_{j=1}^m \E{\norm{\tII_j}_H^2} + \E{\norm{\ol\Xi_j}_H^2}\right) \\
		&\quad + C\sum_{j=1}^m \LR{
			\E{\norm{\tI\tII_j}_H^2} + \E{\norm{\tIV_j}_H^2} + \E{\norm{\tV_j}_H^2} +\E{\norm{\tVI_j}_H^2} + \E{\norm{\ol O_j}_H^2}} \\
		&\le C \LR{ M^{-\ga} +N^{-2\widetilde \ga} +
			\gD t\LR{\sum_{j=1}^m M^{-\ga}+N_f^{-2\widetilde \ga} + K^{-2\gb} + \E{\norm{e_j}_H^2}}}.
	\end{align*}
	The claim now follows by the discrete Grönwall inequality.
\end{proof}

\section{Proof of Theorem~\ref{thm:mlmc-comp} -- \MMLMC{} Complexity }
\label{sec:app3}

\begin{proof}[Proof of Theorem~\ref{thm:mlmc-comp}]
	Fix $\ell=1,\dots, L$.
	By Theorem~\ref{thm:MainRes} and~\eqref{eq:balance} there holds that
	\begin{equation*}
		\max_{m=0,\dots, M}\E{\norm{\ol Y_m^\ell-Y_m^{c, \ell-1}}_H^2}
		\le C\LR{M_{\ell-1}^{\min(\ga,2)} + N_{\ell-1}^{-2\widetilde \ga} + K_{\ell-1}^{-2\gb}}
		= C M_{\ell-1}^{\min(\ga,2)} .
	\end{equation*}
	Now let $Y^{f,L}$ and $Y^{a,L}$ denote the fine approximation and its antithetic counterpart, respectively, on the finest level $L$.
	The bias of the MLMC estimator is then bounded due to Assumption~\ref{ass:mlmc}~\ref{item:weak} by
	\begin{align*}
		\abs{\E{\Psi(X_T) - \ol\Psi_L}}
		\le C M_L^{-1+\gd}.
	\end{align*}
	Using a first order Taylor expansion of $\Psi\in C_b^2(H,\bR)$ around
        \del{$\ol Y_M^\ell$}\add{\(Y_{M_{\ell-1}}^{c,\ell-1}\)} and
        Theorem~\ref{thm:MainRes} and Corollary~\ref{cor:strong-with-milstein}
        show that for some $\xi^{f,\ell}, \xi^{a,\ell}\in H$ the variance decay on each
        level may be bounded by \rdel{\begin{align*} \var(\ol \Psi_\ell - \Psi_{\ell-1}^c)
          &\le \frac{1}{4}\E{\LR{\Psi(Y_{M_\ell}^{f,\ell}) + \Psi(Y_{M_\ell}^{a,\ell}) - 2\Psi(Y_{M_{\ell-1}}^{c,\ell-1})}^2}\\
          &\le \E{\LR{ \Psi(\ol Y_{M_\ell}^\ell) - \Psi(Y_{M_{\ell-1}}^{c,\ell-1}) +
            \frac{\Psi'(\xi^{f,\ell})}{2}\LR{Y_{M_\ell}^{f,\ell}-\ol Y_{M_\ell}^\ell} +
            \frac{\Psi'(\xi^{a,\ell})}{2}\LR{Y_{M_\ell}^{a,\ell}-\ol Y_{M_\ell}^\ell}
            }^2}\\
          &\le C\E{\norm{\ol Y_{M_\ell}^\ell - Y_{M_{\ell-1}}^{c,\ell-1}}_H^2} \\
          &\le C M_\ell^{-\min(\ga,2)}.
	\end{align*}}
      \add{\begin{align*}
        \var(\ol \Psi_\ell - \Psi_{\ell-1}^c)
        &\le \frac{1}{4}\E{\LR{\Psi(Y_{M_\ell}^{f,\ell}) + \Psi(Y_{M_\ell}^{a,\ell}) - 2\Psi(Y_{M_{\ell-1}}^{c,\ell-1})}^2}\\
        &\le
          \E{
          \LR{\Psi'(Y_{M_\ell}^{c,\ell-1})\LR{\ol Y_{M_\ell}^\ell - Y_{M_{\ell-1}}^{c,\ell-1}} +
          \sum_{b \in \lbrace a,f\rbrace}
          \frac{\Psi''(\xi^{b,\ell})}{2}\LR{Y_{M_\ell}^{b,\ell}- Y_{M_\ell}^{c,\ell-1}}\LR{Y_{M_\ell}^{b,\ell}-Y_{M_\ell}^{c,\ell-1}}}^2}\\
        &\le C\E{\norm{\ol Y_{M_\ell}^\ell - Y_{M_{\ell-1}}^{c,\ell-1}}_H^2}
        + C \sum_{b \in \lbrace a,f\rbrace} \E{\norm{Y_{M_\ell}^{b,\ell} - Y_{M_{\ell-1}}^{c,\ell-1}}_H^4}\\
        &\le C M_\ell^{-\min(\ga,2)} + C M_{\ell}^{-2}.
      \end{align*}}

	Finally, Assumption~\ref{ass:mlmc}~\ref{item:comlexity} yields a cost per sample of $\ol\Psi^\ell$ given by
	$\cC_\ell\le C M_\ell^{1+\gg}$.
	As $M_\ell=M_02^\ell$, \cite[Theorem 2.1]{giles2015multilevel} yields the claim.

\end{proof}
 
\addcontentsline{toc}{section}{References}
\bibliographystyle{abbrv}

\begin{thebibliography}{10}

\bibitem{ACLW16}
R.~Anton, D.~Cohen, S.~Larsson, and X.~Wang.
\newblock Full discretization of semilinear stochastic wave equations driven by
  multiplicative noise.
\newblock {\em SIAM Journal on Numerical Analysis}, 54(2):1093--1119, 2016.

\bibitem{B10}
A.~Barth.
\newblock A finite element method for martingale-driven stochastic partial
  differential equations.
\newblock {\em Communications on Stochastic Analysis}, 4(3):355--375, 2010.

\bibitem{BB14}
A.~Barth and F.~E. Benth.
\newblock The forward dynamics in energy markets -- infinite-dimensional
  modelling and simulation.
\newblock {\em Stochastics: An International Journal of Probability and
  Stochastic Processes}, 86(6):932--966, 2014.

\bibitem{BL12b}
A.~Barth and A.~Lang.
\newblock {M}ilstein approximation for advection-diffusion equations driven by
  multiplicative noncontinuous martingale noises.
\newblock {\em Applied Mathematics and Optimization}, 66(3):387--413, 2012.

\bibitem{barth2012multilevel}
A.~Barth and A.~Lang.
\newblock Multilevel {M}onte {C}arlo method with applications to stochastic
  partial differential equations.
\newblock {\em International Journal of Computer Mathematics},
  89(18):2479--2498, 2012.

\bibitem{barth:milstein-SPDE}
A.~Barth and A.~Lang.
\newblock {$L^p$} and almost sure convergence of a {M}ilstein scheme for
  {Stochastic Partial Differential Equations}.
\newblock {\em Stochastic Processes and their Applications}, 123(5):1563--1587,
  2013.

\bibitem{BLS13}
A.~Barth, A.~Lang, and C.~Schwab.
\newblock Multilevel {M}onte {C}arlo method for parabolic stochastic partial
  differential equations.
\newblock {\em BIT Numerical Mathematics}, 53(1):3--27, 2013.

\bibitem{BS18a}
A.~Barth and A.~Stein.
\newblock Approximation and simulation of infinite-dimensional {L}{\'e}vy
  processes.
\newblock {\em Stochastics and Partial Differential Equations: Analysis and
  Computations}, 6(2):286--334, 2018.

\bibitem{BSt18}
A.~Barth and T.~St{\"u}we.
\newblock Weak convergence of {G}alerkin approximations of stochastic partial
  differential equations driven by additive {L\'e}vy noise.
\newblock {\em Mathematics and Computers in Simulation}, 143:215--225, 2018.

\bibitem{BK08}
F.~E. Benth and S.~Koekebakker.
\newblock Stochastic modeling of financial electricity contracts.
\newblock {\em Energy Economics}, 30(3):1116--1157, 2008.

\bibitem{bonito2015numerical}
A.~Bonito and J.~Pasciak.
\newblock Numerical approximation of fractional powers of elliptic operators.
\newblock {\em Mathematics of Computation}, 84(295):2083--2110, 2015.

\bibitem{CT07}
R.~Carmona and M.~R. Tehranchi.
\newblock {\em Interest {R}ate {M}odels: {A}n {I}nfinite {D}imensional
  {S}tochastic {A}nalysis {P}erspective}.
\newblock Springer Science \& Business Media, 2007.

\bibitem{D09}
A.~Debussche and J.~Printems.
\newblock Weak order for the discretization of the stochastic heat equation.
\newblock {\em Mathematics of Computation}, 78(266):845--863, 2009.

\bibitem{DHP12}
T.~Dunst, E.~Hausenblas, and A.~Prohl.
\newblock Approximate {E}uler method for parabolic stochastic partial
  differential equations driven by space-time {L}{\'e}vy noise.
\newblock {\em SIAM Journal on Numerical Analysis}, 50(6):2873--2896, 2012.

\bibitem{giles2015multilevel}
M.~B. Giles.
\newblock Multilevel {M}onte {C}arlo methods.
\newblock {\em Acta Numerica}, 24:259--328, 2015.

\bibitem{giles2012stochastic}
M.~B. Giles and C.~Reisinger.
\newblock Stochastic finite differences and multilevel {M}onte {C}arlo for a
  class of {SPDE}s in finance.
\newblock {\em SIAM Journal on Financial Mathematics}, 3(1):572--592, 2012.

\bibitem{giles2014antithetic}
M.~B. Giles and L.~Szpruch.
\newblock Antithetic multilevel {M}onte {C}arlo estimation for
  multi-dimensional {SDE}s without {L\'e}vy area simulation.
\newblock {\em The Annals of Applied Probability}, 24(4):1585--1620, 2014.

\bibitem{grebenkov2013geometrical}
D.~S. Grebenkov and B.-T. Nguyen.
\newblock Geometrical structure of {L}aplacian eigenfunctions.
\newblock {\em SIAM Review}, 55(4):601--667, 2013.

\bibitem{GSS16}
I.~Gy{\"o}ngy, S.~Sabanis, and D.~{\v{S}}i{\v{s}}ka.
\newblock Convergence of tamed {E}uler schemes for a class of stochastic
  evolution equations.
\newblock {\em Stochastics and Partial Differential Equations: Analysis and
  Computations}, 4(2):225--245, 2016.

\bibitem{hausenblas2007stochastic}
E.~Hausenblas and J.~Seidler.
\newblock Stochastic convolutions driven by martingales: maximal inequalities
  and exponential integrability.
\newblock {\em Stochastic Analysis and Applications}, 26(1):98--119, 2007.

\bibitem{JP09}
A.~Jentzen and P.~E. Kloeden.
\newblock The numerical approximation of stochastic partial differential
  equations.
\newblock {\em Milan Journal of Mathematics}, 77(1):205--244, 2009.

\bibitem{jentzen:milstein-SPDE}
A.~Jentzen and M.~R{\"o}ckner.
\newblock A {M}ilstein scheme for {SPDEs}.
\newblock {\em Foundations of Computational Mathematics}, 15:313--362, 2015.

\bibitem{JKN11}
P.~E. Kloeden, G.~J. Lord, A.~Neuenkirch, and T.~Shardlow.
\newblock The exponential integrator scheme for stochastic partial differential
  equations: Pathwise error bounds.
\newblock {\em Journal of Computational and Applied Mathematics},
  235(5):1245--1260, 2011.

\bibitem{KLL15}
M.~Kov{\'a}cs, S.~Larsson, and F.~Lindgren.
\newblock On the backward {E}uler approximation of the stochastic
  {A}llen-{C}ahn equation.
\newblock {\em Journal of Applied Probability}, 52(2):323--338, 2015.

\bibitem{KLS15}
M.~Kov{\'a}cs, F.~Lindner, and R.~L. Schilling.
\newblock Weak convergence of finite element approximations of linear
  stochastic evolution equations with additive {L\'e}vy noise.
\newblock {\em SIAM/ASA Journal on Uncertainty Quantification},
  3(1):1159--1199, 2015.

\bibitem{kruse2014strong}
R.~Kruse.
\newblock {\em Strong and Weak Approximation of Semilinear Stochastic Evolution
  Equations}.
\newblock Springer, 2014.

\bibitem{lang2016note}
A.~Lang.
\newblock A note on the importance of weak convergence rates for {SPDE}
  approximations in multilevel {M}onte {C}arlo schemes.
\newblock In {\em Monte Carlo and Quasi-Monte Carlo Methods: MCQMC, Leuven,
  Belgium, April 2014}, pages 489--505. Springer, 2016.

\bibitem{rossler:deriviative-free-milstein-SPDE-commutative}
C.~Leonhard and A.~R\"{o}\ss{}ler.
\newblock Enhancing the order of the {M}ilstein scheme for {Stochastic Partial
  Differential Equations} with commutative noise.
\newblock {\em SIAM Journal on Numerical Analysis}, 56(4):2585--2622, 2018.

\bibitem{nguyen2020analysis}
D.~H. Nguyen, N.~N. Nguyen, and G.~Yin.
\newblock Analysis of a spatially inhomogeneous stochastic partial differential
  equation epidemic model.
\newblock {\em Journal of Applied Probability}, 57(2):613--636, 2020.

\bibitem{pang2023antithetic}
C.~Pang and X.~Wang.
\newblock Antithetic multilevel {M}onte {C}arlo method for approximations of
  {SDE}s with non-globally lipschitz continuous coefficients.
\newblock {\em arXiv preprint arXiv:2305.12992}, 2023.

\bibitem{pazy1983semigroups}
A.~Pazy.
\newblock {\em Semigroups of Linear Operators and Applications to Partial
  Differential Equations}, volume~44.
\newblock Springer Science \& Business Media, Applied Mathematical Sciences, 2
  edition, 1983.

\bibitem{PZ07}
S.~Peszat and J.~Zabczyk.
\newblock {\em Stochastic Partial Differential Equations with {L}\'evy Noise}.
\newblock Cambridge University Press, 2007.

\bibitem{prevot2007concise}
C.~Pr{\'e}v{\^o}t and M.~R{\"o}ckner.
\newblock {\em A Concise Course on Stochastic Partial Differential Equations}.
\newblock Springer, 2007.

\bibitem{BS2019stochastic}
A.~Stein and A.~Barth.
\newblock Stochastic transport with l{\'e}vy noise fully discrete numerical
  approximation.
\newblock {\em Mathematics and Computers in Simulation}, 2024.

\bibitem{thomee2007galerkin}
V.~Thom{\'e}e.
\newblock {\em Galerkin Finite Element Methods for Parabolic Problems},
  volume~25.
\newblock Springer Science \& Business Media, 2007.

\bibitem{hallern:milstein-SPDE}
C.~von Hallern and A.~R{\"o}{\ss}ler.
\newblock An analysis of the {M}ilstein scheme for {SPDEs} without a
  commutative noise condition.
\newblock In B.~Tuffin and P.~L'Ecuyer, editors, {\em Monte Carlo and
  Quasi-Monte Carlo Methods}, pages 503--521, Cham, 2020. Springer
  International Publishing.

\bibitem{rossler:deriviative-free-milstein-SPDE-non-commutative}
C.~von Hallern and A.~R{\"o}{\ss}ler.
\newblock A derivative-free {M}ilstein type approximation method for {SPDEs}
  covering the non-commutative noise case.
\newblock {\em Stochastics and Partial Differential Equations: Analysis and
  Computations}, 11(4):1672--1731, 2023.

\end{thebibliography}

\end{document}